\documentclass[leqno,11pt]{article}

\usepackage[utf8]{inputenc}
\usepackage[T1]{fontenc}
\usepackage{microtype}

\usepackage[a4paper]{geometry}

\ifdefined\screenview
  \edef\mtht{\the\textheight}
  \edef\mtwd{\the\textwidth}
  \geometry{
    paperwidth=\dimexpr\mtwd+2.5em\relax,
    paperheight=\dimexpr\mtht+2.5em\relax,
    text={\mtwd,\mtht}
  }
\fi

\usepackage[dvipsnames]{xcolor}

\usepackage{amsmath}
\usepackage{amsthm}
\usepackage{tikz-cd}
\usetikzlibrary{arrows} 
\tikzset{
  commutative diagrams/.cd, 
  arrow style=tikz, 
  diagrams={>=stealth}
}
\usetikzlibrary{matrix,decorations.pathreplacing,calc}

\usepackage{textcomp}
\usepackage[sb]{libertine}
\usepackage[varqu,varl]{zi4}%
\usepackage[libertine,bigdelims,vvarbb]{newtxmath}
\usepackage[supstfm=libertinesups,supscaled=1.2,raised=-.13em]{superiors}
\useosf

\usepackage[scr=boondox,cal=euler]{mathalfa}

\usepackage{marvosym}

\usepackage{slashed}
\usepackage{esint} 
\usepackage[english]{babel}

\usepackage{imakeidx}
\indexsetup{noclearpage}
\makeindex[intoc]

\usepackage{csquotes}
\usepackage[
  backend=biber,
  hyperref=true,
  backref=true,
  isbn=false,
  doi=true,
  natbib=true,
  eprint=true,
  useprefix=true,
  maxcitenames=99,
  maxbibnames=99,  
  maxalphanames=99, 
  minalphanames=99,
  safeinputenc,
  style=alphabetic,
  citestyle=alphabetic,
  block=space,
  datamodel=preamble/ext-eprint
]{biblatex}
\usepackage[
  bookmarksnumbered = true,
  hypertexnames = false,
  colorlinks    = true,
  citecolor     = gray,
  linkcolor     = gray,
  urlcolor      = gray,
  breaklinks
]{hyperref}

\DeclareFieldFormat{url}{%
  \href{#1}{\ComputerMouse}
}
\DeclareFieldFormat{doi}{%
  \mkbibacro{DOI}\addcolon\space
  \href{https://doi.org/#1}{#1}
}
\makeatletter
\DeclareFieldFormat{arxiv}{%
  arXiv\addcolon\space
  \href{http://arxiv.org/\abx@arxivpath/#1}{#1}
}
\makeatother
\DeclareFieldFormat{mr}{%
  MR\addcolon\space
  \href{http://www.ams.org/mathscinet-getitem?mr=MR#1}{#1}
}
\DeclareFieldFormat{zbl}{%
  Zbl\addcolon\space
  \href{http://zbmath.org/?q=an:#1}{#1}
}
\renewbibmacro*{eprint}{%
  \printfield{arxiv}%
  \newunit\newblock
  \printfield{mr}%
  \newunit\newblock
  \printfield{zbl}%
  \newunit\newblock
  \iffieldundef{eprinttype}
  {\printfield{eprint}}
  {\printfield[eprint:\strfield{eprinttype}]{eprint}}
}

\AtEveryBibitem{%
  \clearlist{publisher}%
}
\AtEveryBibitem{%
  \clearlist{address}%
}
\DeclareFieldFormat[article,inproceedings,inbook,incollection,thesis]{title}{\textit{#1}}
\renewbibmacro{in:}{}
\addbibresource{preamble/refs.bib}
\newcommand{\printreferences}{\printbibliography[heading=bibintoc]}

\usepackage[inline,shortlabels]{enumitem}

\usepackage{subcaption}

\usepackage[yyyymmdd]{datetime}

\usepackage{etoolbox}
\ifundef{\abstract}{}{\patchcmd{\abstract}%
    {\quotation}{\quotation\noindent\ignorespaces}{}{}}

\usepackage[super]{nth}


\usepackage{thmtools}

\numberwithin{equation}{section}

\renewcommand{\qedsymbol}{$\blacksquare$}

\newcommand{\CorollaryQED}{\qedsymbol}
\newcommand{\ConjectureQED}{$\square$}
\newcommand{\SituationQED}{$\times$}
\newcommand{\DefinitionQED}{$\bullet$}
\newcommand{\NotationQED}{$\circ$}
\newcommand{\ExampleQED}{$\spadesuit$}
\newcommand{\RemarkQED}{$\clubsuit$}

\declaretheorem[numberlike=equation,]{theorem}
\declaretheorem[numbered=no,name=Theorem]{theorem*}
\declaretheorem[numberlike=equation,name=Lemma]{lemma}
\declaretheorem[numberlike=equation,name=Proposition]{prop}
\declaretheorem[numberlike=equation,name=Corollary,qed=\CorollaryQED]{cor}

\declaretheorem[numberlike=equation,name=Hypothesis]{hypothesis}

\declaretheorem[numberlike=equation,name=Definition,style=definition,qed=\DefinitionQED]{definition}
\declaretheorem[numbered=no,name=Definition,style=definition,qed=\DefinitionQED]{definition*}

\declaretheorem[numberlike=equation,name=Data,style=definition]{data}
\declaretheorem[numberlike=equation,style=definition,qed=\ExampleQED]{example}

\declaretheorem[numberlike=equation,style=remark,qed=\RemarkQED]{remark}

\def\makeautorefname#1#2{\AtBeginDocument{\expandafter\def\csname#1autorefname\endcsname{#2}}}
\makeautorefname{table}{Table}        
\makeautorefname{chapter}{Chapter}
\makeautorefname{section}{Section}
\makeautorefname{subsection}{Section}
\makeautorefname{subsubsection}{Section}
\makeautorefname{footnote}{Footnote}
\AtBeginDocument{\def\itemautorefname~#1\null{(#1)\null}}
\AtBeginDocument{\def\equationautorefname~#1\null{(#1)\null}}

\numberwithin{substep}{step}
\makeautorefname{step}{Step}
\makeautorefname{substep}{Step}

\makeautorefname{case}{Case}
\makeautorefname{substep}{Step}

\setlist[description]{leftmargin=!,labelindent=1em}
\setlist[enumerate]{label={\rm (\arabic*)},ref=\arabic*}
\setlist[enumerate,2]{label={\rm (\alph*)},ref=\theenumi.\alph*}
\let\C\undefined
\let\U\undefined

\usepackage{bm}
\usepackage{mathtools} 
\usepackage{stmaryrd} 

\DeclareFontFamily{U}{mathx}{\hyphenchar\font45}
\DeclareFontShape{U}{mathx}{m}{n}{
      <5> <6> <7> <8> <9> <10>
      <10.95> <12> <14.4> <17.28> <20.74> <24.88>
      mathx10
      }{}
\DeclareSymbolFont{mathx}{U}{mathx}{m}{n}
\DeclareFontSubstitution{U}{mathx}{m}{n}
\DeclareMathAccent{\widecheck}{0}{mathx}{"71}
\DeclareMathAccent{\wideparen}{0}{mathx}{"75}

\DeclareMathOperator{\Ad}{Ad}

\DeclareMathOperator{\End}{End}

\DeclareMathOperator{\GL}{GL}

\DeclareMathOperator{\HF}{\HF}
\DeclareMathOperator{\Hess}{Hess}

\DeclareMathOperator{\Hom}{Hom}

\DeclareMathOperator{\Lie}{Lie}

\DeclareMathOperator{\Sym}{Sym}

\DeclareMathOperator{\im}{im}

\DeclareMathOperator{\tr}{tr}

\DeclarePairedDelimiter\floor{\lfloor}{\rfloor}
\DeclarePairedDelimiter\paren{\lparen}{\rparen}

\DeclarePairedDelimiter{\Abs}{\|}{\|}

\DeclarePairedDelimiter{\abs}{\lvert}{\rvert}
\DeclarePairedDelimiter{\bracket}{\langle}{\rangle}

\DeclarePairedDelimiter{\set}{\lbrace}{\rbrace}
\def\({\left(}
\def\){\right)}
\def\<{\left\langle}
\def\>{\right\rangle}

\newcommand{\C}{{\mathbf{C}}}
\newcommand{\Gtwo}{G_2}

\newcommand{\N}{{\mathbf{N}}}

\newcommand{\PSL}{\P\SL}

\newcommand{\R}{\mathbf{R}}

\newcommand{\SL}{\mathrm{SL}}
\newcommand{\SO}{\mathrm{SO}}
\newcommand{\SU}{\mathrm{SU}}

\newcommand{\Span}[1]{\bracket{#1}}
\newcommand{\Spin}{\mathrm{Spin}}
\newcommand{\Sp}{\mathrm{Sp}}

\newcommand{\U}{\mathrm{U}}

\newcommand{\Z}{\mathbf{Z}}

\newcommand{\co}{\mskip0.5mu\colon\thinspace}

\newcommand{\defined}[2][\key]{\def\key{#2}\textbf{#2}\index{#1}}

\newcommand{\del}{\partial}

\newcommand{\hkred}{{/\!\! /\!\! /}}

\newcommand{\id}{\mathrm{id}}

\newcommand{\inner}[2]{\bracket{#1, #2}}

\newcommand{\iso}{\cong}

\newcommand{\loc}{\mathrm{loc}}
\newcommand{\mathsc}[1]{{\normalfont\textsc{#1}}}
\newcommand{\nsub}{\triangleleft}

\newcommand{\one}{\mathbf{1}}

\newcommand{\qandq}{\quad\text{and}\quad}

\newcommand{\qforq}{\quad\text{for}\quad}
\newcommand{\qand}{\quad\text{and}}

\newcommand{\qwithq}{\quad\text{with}\quad}

\newcommand{\scN}{{\mathsc{n}}}

\newcommand{\su}{\mathfrak{su}}

\newcommand{\vol}{\mathrm{vol}}

\renewcommand{\H}{\mathbf{H}}
\renewcommand{\Im}{\operatorname{Im}}

\renewcommand{\P}{\mathbf{P}}

\renewcommand{\det}{\operatorname{det}}

\renewcommand{\epsilon}{\varepsilon}
\renewcommand{\setminus}{{\backslash}}
\renewcommand{\sp}{\mathfrak{sp}}

\renewcommand{\leq}{\leqslant}
\renewcommand{\geq}{\geqslant}

\newcommand{\Wedge}{\Lambda}

\makeatletter
\renewcommand*\env@matrix[1][*\c@MaxMatrixCols c]{%
  \hskip -\arraycolsep
  \let\@ifnextchar\new@ifnextchar
  \array{#1}}

\renewcommand\xleftrightarrow[2][]{%
  \ext@arrow 9999{\longleftrightarrowfill@}{#1}{#2}}
\newcommand\longleftrightarrowfill@{%
  \arrowfill@\leftarrow\relbar\rightarrow}
\makeatother



\newcommand{\rd}{{\rm d}}

\newcommand{\rII}{{\rm II}}




\newcommand{\bH}{{\mathbf{H}}}

\newcommand{\bS}{{\mathbf{S}}}


\newcommand{\sA}{\mathscr{A}}

\newcommand{\sH}{\mathscr{H}}


\newcommand{\ff}{{\mathfrak f}}
\newcommand{\fg}{{\mathfrak g}}

\newcommand{\fh}{{\mathfrak h}}

\newcommand{\fl}{{\mathfrak l}}

\newcommand{\fr}{{\mathfrak r}}
\newcommand{\fs}{{\mathfrak s}}
\newcommand{\ft}{{\mathfrak t}}
\newcommand{\fu}{{\mathfrak u}}

\newcommand{\fw}{{\mathfrak w}}

\newcommand{\fK}{{\mathfrak K}}

\newcommand{\fR}{{\mathfrak R}}


\newcommand{\slD}{\slashed D}


%

\newcommand{\bgamma}{\bm{\gamma}}

\newcommand{\bxi}{{\bm\xi}}
\newcommand{\bzeta}{{\bm\zeta}}

\newcommand{\ubR}{{\underline \R}}


\newcommand{\Tstar}{T^*\!}


\author{
  Thomas Walpuski 
  \and
  Boyu Zhang 
}
\title{
  On the compactness problem for a family of generalized Seiberg--Witten equations in dimension three
}
\date{2020-09-01}

\begin{document}

\maketitle

\begin{abstract}
  We prove an abstract compactness theorem for a family of generalized Seiberg--Witten equations in dimension three.
  This result recovers Taubes' compactness theorem for stable flat $\PSL_2(\C)$--connections \cite{Taubes2012} as well as the compactness theorem for Seiberg--Witten equations with multiple spinors \cite{Haydys2014}.
  Furthermore, this result implies a compactness theorem for the ADHM$_{1,2}$ Seiberg--Witten equation, which partially verifies a conjecture by \citet[Conjecture 5.26]{Doan2017d}.
\end{abstract}

\section{Introduction}

The study of the compactness problem for generalized Seiberg--Witten equations was pioneered by \citet{Taubes2012} with his compactness theorem for stable flat $\PSL_2(\C)$--connections in dimension three.
Building on the ideas developed in \cite{Taubes2012},
\citet{Haydys2014} proved a compactness theorem for the Seiberg--Witten equation with multiple spinors in dimension three, and
Taubes proved compactness theorems for the Kapustin--Witten equation \cite{Taubes2013},
the Vafa--Witten equation \cite{Taubes2017},
and the  Seiberg--Witten equation with multiple spinors in dimension four \cite{Taubes2016}.
Although the statements of these compactness theorems are very similar,
many details of their proofs seem to rely heavily on the particular structure of the equation under consideration.
The purpose of this article is to prove an abstract compactness theorem for generalized Seiberg--Witten equations in dimension three for which a simple analytical hypothesis holds.
Our result recovers Taubes' compactness theorem for stable flat $\PSL_2(\C)$--connections \cite{Taubes2012} as well as the compactness theorem for Seiberg--Witten equations with multiple spinors \cite{Haydys2014}.
Furthermore, it also implies a compactness theorem for the ADHM$_{1,2}$ Seiberg--Witten equation, which partially verifies a conjecture by \citet[Conjecture 5.26]{Doan2017d}.

\subsection{Generalized Seiberg--Witten equations}

Let us review the relation between quaternionic representations and generalized Seiberg--Witten equations on an oriented Riemannian $3$--manifold.
For more detailed discussions we refer the reader to \cites{Taubes1999b}{Haydys2013}{Doan2017a}[Appendix B]{Doan2017d}.

\begin{definition}
  Denote by $\bH = \R\Span{1,i,j,k}$ the normed division algebra of the quaternions.
  A \defined{quaternionic Hermitian vector space} is a left $\bH$--module $S$ together with an Euclidean inner product $\inner{\cdot}{\cdot}$ such that $i,j,k$ act by isometries.
  The \defined{unitary symplectic group} $\Sp(S)$ is the subgroup of $\GL_\bH(S)$ preserving $\inner{\cdot}{\cdot}$.
\end{definition}

\begin{definition}
  A \defined{quaternionic representation} of a Lie group $H$ is a Lie group homomorphism $\rho\co H \to \Sp(S)$ for some quaternionic Hermitian vector space $S$.
\end{definition}

Let $H$ be a compact Lie group.
Denote its Lie algebra by $\fh$.
Let $\rho\co H \to \Sp(S)$ be a quaternionic representation.
Abusing notation, we denote the induced Lie algebra representation by $\rho\co \fh \to \sp(S)$.
Define $\gamma \co \Im\bH \to \End(S)$,
$\bgamma\co \Im\bH\otimes\fh \to \End(S)$, and
$\mu\co S \to (\Im\H\otimes\fh)^*$ by  
\begin{equation}
  \label{Eq_PointwiseMaps_Definition}
  \gamma(v)\phi
  \coloneq
  v\phi, \quad
  \bgamma(v\otimes\xi) \coloneq \gamma(v)\rho(\xi), \qandq
  \mu(\phi) \coloneq \frac12 \bgamma^*(\phi\phi^*),
\end{equation}
respectively.
The map $\mu$ is an equivariant hyperkähler moment map for the action of $H$ on $S$.

\begin{data}
  A set of \defined{algebraic data} consists of:
  \begin{enumerate}
  \item
    a compact Lie group $H$ with a distinguished element $-1 \in Z(H)$ satisfying $(-1)^2 = 1_H$,
  \item
    a closed, connected, normal subgroup $G \nsub H$, and
  \item
    a quaternionic representation $\rho\co H \to \Sp(S)$.
  \end{enumerate}
\end{data}

\begin{remark}
  It is $G$ which plays role of the structure group of the gauge theory.
  If $G$ is a proper subgroup of $H$,
  then the gauge theory can be twisted by the \defined{flavor symmetry group}
  \begin{equation*}
    K \coloneq H/\Span{G,-1}.
    \qedhere
  \end{equation*}
\end{remark}

\begin{definition}
  Set $\Spin^H(3) \coloneq (\Sp(1) \times H)/\Z_2$.
  $\Spin^H(3)$ projects onto $\Sp(1)/\Z_2 = \SO(3)$.
  A \defined{spin$^H$ structure} on $(M,g)$ is a principal $\Spin^H(3)$--bundle $\fs$ together with an isomorphism
  \begin{equation*}
    \fs \times_{\Spin^H(3)} \SO(3) \iso \SO(TM).
    \qedhere
  \end{equation*}
\end{definition}

A spin$^H$ structure $\fs$ together with $G\nsub H$ and $\rho$ induces:
\begin{enumerate}
\item
  the \defined{flavor bundle}
  \begin{equation*}
    \ff \coloneq \fs\times_{\Spin^H(3)} K,
  \end{equation*}
\item
  the \defined{adjoint bundle}
  \begin{equation*}
    \Ad(\fs) \coloneq \fs \times_{\Spin^H(3)} \Lie(G),
  \end{equation*}
\item
  the \defined{spinor bundle}
  \begin{equation*}
    \bS \coloneq \fs\times_{\Spin^H(3)} S,
  \end{equation*}
  as well as
\item
  maps
  \begin{equation*}
    \gamma \co TM \to \End(S), \quad
    \bgamma \co TM\otimes\Ad(\fs) \to \End(S), \qandq
    \mu\co \bS \to \Wedge^2\Tstar M \otimes \Ad(\fs),
  \end{equation*}
  where $\gamma$ and $\bgamma$ are induced directly by \eqref{Eq_PointwiseMaps_Definition}, and
  $\mu$ is induced by \eqref{Eq_PointwiseMaps_Definition} and the isomorphism $\Wedge^2\Tstar M \otimes \Ad(\fs) \iso \Tstar M \otimes \Ad(\fs)^*$.
\end{enumerate}

\begin{definition}
  A \defined{spin connection} on $\fs$ is a connection which induces the Levi--Civita connection on $TM$.
  The space of all spin connections on $\fs$ inducing a fixed connection $B$ on the flavor bundle $\ff$ is denoted by
  \begin{equation*}
    \sA(\fs,B).
  \end{equation*}
  Given a spin connection $A$,
  denote by
  \begin{equation*}
    \Ad(A) \in \sA(\Ad(\fs))
  \end{equation*}
  the induced connection on $\Ad(\fs)$
  and define the \defined{Dirac operator} $\slD_A \co \Gamma(\bS) \to \Gamma(\bS)$ by
  \begin{equation*}
    \slD_A\Phi \coloneq \sum_{i=1}^3 \gamma(e_i)\nabla_{A,e_i}\Phi
  \end{equation*}
  for $e_1,e_2,e_3$ a local orthonormal frame.
\end{definition}

\begin{data}
  A set of \defined{geometric data} compatible with a given set of algebraic data $(G,H,\rho)$ consists of:
  \begin{enumerate}
  \item an oriented Riemannian $3$--manifold $(M,g)$ together with a spin$^H$ structure $\fs$, and
  \item a connection $B$ on the flavor bundle induced by $\fs$.
  \end{enumerate}
\end{data}

\begin{definition}
  The \defined{generalized Seiberg--Witten equation} associated with the data $(G,H,\rho)$ and $(M,g,\fs,B)$ is the following partial differential equation for $A \in \sA(\fs,B)$ and $\Phi \in \Gamma(\bS)$:
  \begin{equation}
    \label{Eq_SeibergWitten}
    \slD_A\Phi = 0 \qandq
    F_{\Ad(A)} = \mu(\Phi).                 
    \qedhere
  \end{equation}
\end{definition}

To illustrate the above construction,
let us consider a few examples.

\begin{example}
  \label{Ex_ClassicalSeibergWitten}
  Define the quaternionic representation $\rho\co \U(1) \to \Sp(\H)$ by
  \begin{equation*}
    \rho(e^{i\alpha})q \coloneq q e^{i\alpha}.
  \end{equation*}  
  Identifying $(i\R\otimes \Im \H)^* = i\R\otimes\Im \H$,
  the hyperkähler moment map $\mu\co \H \to (i\R\otimes \Im \H)^*$ is
  \begin{equation*}
    \mu(q) = -\frac{i}{2} \otimes qiq^*.
  \end{equation*}
  Splitting $\H = \C \oplus j\C$,  
  we see that $\bgamma(\mu(q)) \in \End(\C^{\oplus 2})$ for $q = z + jw$ is
  \begin{equation}
    \label{Eq_ClassicalSeibergWitten_MomentMap}
    \frac12
    \begin{pmatrix}
      \abs{z}^2 - \abs{w}^2 & 2z\bar w \\
      2 \bar z w & \abs{w}^2 - \abs{z}^2
    \end{pmatrix}
    =
    q\inner{q}{\cdot}_\C - \frac12\abs{q}_\C^2\,\id_{\C^{\oplus 2}}.
  \end{equation}
  
  Let $(M,g)$ be an oriented Riemannian $3$--manifold and let $\fs$ be a spin$^{\U(1)}$ structure on $M$;
  that is: a spin$^c$ structure.
  The adjoint bundle $\Ad(\fs)$ is $i\ubR$.
  Denote the spinor bundles of $\fs$ by $\bS$.  
  If $A \in \sA(\fs)$,
  then it induces a connection $\det(A)$ on $\det(\bS)$ with
  \begin{equation*}
    F_{\det(A)} = 2F_{\Ad(A)}.
  \end{equation*}
  Therefore,
  the generalized Seiberg--Witten equation \autoref{Eq_SeibergWitten} associated with the above data agrees with the \defined{classical Seiberg--Witten equation}
  \begin{align*}
    \slD_A \Phi &= 0 \qand \\
    \frac12 \bgamma(F_{\det(A)}) &= \Phi\inner{\Phi}{\cdot}_\C - \abs{\Phi}_\C^2\id_\bS
  \end{align*}
  appearing, for example, in \cites[Section 2]{Witten1994}[Section 1.3]{Kronheimer2007}.  
\end{example}

\begin{example}
  \label{Ex_GCFlatness}
  Let $G$ be a compact Lie group and set $\fg \coloneq \Lie(G)$.
  Choosing a $G$--invariant inner product on $\fg$ turns $S \coloneq \fg\otimes_\R\H$ into a quaternionic Hermitian vector space.
  The adjoint representation induces a quaternionic representation $\rho \co G \to \Sp(S)$.
  The moment map $\mu\co S \to \Im\H\otimes\fg$ is given by
  \begin{align*}
    \mu(\bxi)
    &=
      \frac12[\bxi,\bxi] \\
    &=
      ([\xi_2,\xi_3]+[\xi_0,\xi_1])\otimes i
    + ([\xi_3,\xi_1]+[\xi_0,\xi_2])\otimes j
    + ([\xi_1,\xi_2]+[\xi_0,\xi_3])\otimes k
  \end{align*}
  for $\bxi = \xi_0\otimes 1 + \xi_1\otimes i + \xi_2\otimes j + \xi_3\otimes k \in \H\otimes\fg$.
  Extend $\rho$ to a quaternionic representation of $H \coloneq \Sp(1) \times G$ by declaring that $q \in \Sp(1)$ acts by right-multiplication with $q^*$.
  Set $-1 \coloneq (-\one,1_G) \in H$.
  
  Since
  \begin{equation*}
    \Spin^H(3) = (\Sp(1)\times\Sp(1))/\Z_2\times G =\SO(4)\times G,
  \end{equation*}
  a spin$^H$ structure is nothing but an oriented Euclidean vector bundle $N$ of rank $4$ together with an orientation-preserving isometry $\Wedge^+ N \iso TM$ and a principal $G$--bundle $P$.
  Choosing $N = \ubR\oplus \Tstar M$ and $B$ induced by the Levi-Civita connection,
  the generalized Seiberg--Witten equation \eqref{Eq_SeibergWitten} associated with the above data becomes the following partial differential equation for $A \in \sA(P)$, $a \in \Omega^1(M,\Ad(P))$, and $\xi \in \Gamma(\Ad(P))$:
  \begin{equation}
    \label{Eq_StableFlatGC}
    \begin{split}
      \rd_A^*a &= 0, \\
      *\rd_Aa + \rd_A\xi &= 0, \qand \\
      F_A &= \tfrac12[a\wedge a] + *[\xi,a].      
    \end{split}
  \end{equation}

  If $\xi = 0$, then \autoref{Eq_StableFlatGC} is precisely the condition for $A + ia$ to be a \defined{stable flat $G^\C$--connection};
  see \cites{Donaldson1987}[Theorem 3.3]{Corlette1988}.
  In fact, if $M$ is closed,
  then \autoref{Eq_StableFlatGC} implies $\rd_A\xi = 0$ and $[\xi,a] = 0$ and,
  therefore, that $A+ia$ is a stable flat $G^\C$--connection.

  The compactness problem for \autoref{Eq_StableFlatGC} with $G = \SO(3)$ has been considered in \citeauthor{Taubes2012}' pioneering work \cite{Taubes2012},
  to which many of the techniques in this article can be traced back.
\end{example}

\begin{example}
  \label{Ex_ADHMSeibergWitten}
  For $r,k \in \N$,
  consider the quaternionic Hermitian vector space
  \begin{equation*}
    S_{r,k} \coloneq \Hom_\C(\C^r,\H\otimes_\C \C^k) \oplus \H^*\otimes_\R\fu(k)
  \end{equation*}
  and
  \begin{equation*}
    G = \U(k) \nsub H = \SU(r) \times \Sp(1) \times \U(k) \qandq
    -1 \coloneq (\one,-\one,-\one).
  \end{equation*}
  If $r \geq 2$,
  then $S_{r,k}\hkred G \coloneq \mu^{-1}(0)/G$ is the Uhlenbeck compactification of the moduli space of framed $\SU(r)$ ASD instantons of charge $k$ on $\R^4$ \cite{Atiyah1978}.
  If $r = 1$, then
  \begin{equation*}
    S_{1,k}\hkred G = \Sym^k\H \coloneq \H^k\!/S_k;
  \end{equation*}
  see \cites[Proposition 2.9]{Nakajima1999}[Theorem D.2]{Doan2017d}.

  The generalized Seiberg--Witten equation associated with the above data is called the \defined{ADHM$_{r,k}$ Seiberg--Witten equation}.
  It was introduced in \cites[Example A.3]{Doan2017a}[Section 5.1]{Doan2017d} and is expected to play an important role in gauge theory on $\Gtwo$--manifolds \cite{Donaldson2009,Walpuski2013a,Haydys2017}.
  For $k = 1$,
  this is essentially the Seiberg--Witten equation with $r$ spinors,
  whose compactness problem has been considered by \citet{Haydys2014}.
\end{example}

\subsection{An abstract compactness theorem}

Throughout this subsection,
fix a set of algebraic data $(G,H,\rho)$ and a compatible set of geometric data $(M,g,\fs,B)$ with $M$ closed.
The following result is well-known and follows from standard elliptic theory.

\begin{prop}
  \label{Prop_CompactnessProvidedUniformL2ZBounds}
  If $(A_n,\Phi_n)$ is a sequence of solutions of \autoref{Eq_SeibergWitten} satisfying
  \begin{equation*}
    \liminf_{n\to\infty}\, \Abs{\Phi_n}_{L^2} < \infty,
  \end{equation*}
  then, after passing to a subsequence and up to gauge transformations,
  $(A_n,\Phi_n)$ converges to a solution $(A,\Phi)$ of \autoref{Eq_SeibergWitten} in the $C^\infty$ topology.
\end{prop}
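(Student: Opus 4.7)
The plan is to upgrade the $L^2$ control on $\Phi_n$ to uniform $C^\infty$ bounds on suitable gauge representatives of $(A_n,\Phi_n)$ and then extract a $C^\infty$-convergent subsequence by Arzel\`a--Ascoli and diagonalisation. After passing to a subsequence we may assume $\sup_n\|\Phi_n\|_{L^2}<\infty$. The crucial analytic step is to promote this to a uniform $L^\infty$ bound on $\Phi_n$: once this is in hand, $F_{\Ad(A_n)}=\mu(\Phi_n)$ is also uniformly bounded in $L^\infty$, and everything downstream is standard gauge fixing and elliptic bootstrap.

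The $L^\infty$ bound comes from a Bochner argument. The Weitzenb\"ock formula applied to $\slD_A^2\Phi=0$, combined with $F_{\Ad(A)}=\mu(\Phi)$ and the pointwise identity $\langle\bgamma(\mu(\Phi))\Phi,\Phi\rangle=2|\mu(\Phi)|^2$ (which follows directly from $\mu=\tfrac12\bgamma^*(\Phi\Phi^*)$), yields an equation of the shape
\begin{equation*}
  \tfrac12\Delta|\Phi|^2 + |\nabla_{A}\Phi|^2 + 2|\mu(\Phi)|^2 + \tfrac{R}{4}|\Phi|^2 + \langle\gamma(F_B)\Phi,\Phi\rangle = 0.
\end{equation*}
Discarding the nonnegative terms $|\nabla_A\Phi|^2$ and $|\mu(\Phi)|^2$ leaves a differential inequality $\Delta|\Phi|^2\leq C|\Phi|^2$ on the closed manifold $M$, with $C$ determined by the scalar curvature of $g$ and by $\|F_B\|_{L^\infty}$ alone. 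Moser iteration then produces $\|\Phi_n\|_{L^\infty}\leq C'\|\Phi_n\|_{L^2}$, which is uniformly bounded by hypothesis.

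With $\Phi_n$ and hence $F_{\Ad(A_n)}$ uniformly bounded in $L^\infty$, Uhlenbeck's gauge fixing theorem supplies---after patching local gauges on the closed manifold $M$---a sequence of global gauge transformations of $\fs$ relative to which $A_n$ sits in Coulomb gauge with uniform $W^{1,p}$ bounds for every $p<\infty$. In this gauge the combined system of Dirac equation, curvature equation, and Coulomb condition is elliptic modulo the finite-dimensional residual gauge symmetry, and the usual bootstrap---improving $\Phi_n$ via the Dirac equation and $A_n$ via the curvature equation---yields uniform $C^k$ bounds for every $k$. A diagonal Arzel\`a--Ascoli argument then extracts a $C^\infty$-convergent subsequence, whose limit $(A,\Phi)$ solves \autoref{Eq_SeibergWitten}. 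The one step requiring substantive input is the Bochner computation and its interplay with the hyperk\"ahler moment map; the remaining gauge theoretic analysis is entirely routine on the closed $3$--manifold.
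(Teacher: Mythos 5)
Your proof is correct; the paper offers no argument for this proposition, dismissing it as ``well-known and follows from standard elliptic theory,'' and what you have written is precisely that standard argument spelled out: Weitzenb\"ock identity plus the moment-map identity $\inner{\bgamma(\mu(\Phi))\Phi}{\Phi}=2\abs{\mu(\Phi)}^2$ to get $\Delta\abs{\Phi}^2\leq c\abs{\Phi}^2$, an $L^\infty$ bound from the $L^2$ bound, then Uhlenbeck gauge fixing, patching \`a la Donaldson--Kronheimer, elliptic bootstrap, and Arzel\`a--Ascoli. The only point worth noting is that the $L^\infty$ step you obtain by Moser iteration is also available directly from the paper's later \autoref{Prop_LInftyBounds} (taken with $\epsilon=1$), which derives the same bound $\Abs{\Phi}_{L^\infty}\leq c\Abs{\Phi}_{L^2}$ from the same identity \autoref{Eq_LaplacianOfAbsPhiSquaredEqualsLowerOrderTerm} via a Green's-function argument; either route is fine, and the rest of your gauge-theoretic bootstrap is routine as you say.
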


Therefore,
a degenerating sequence $(A_n,\Phi_n)$ of solutions of \autoref{Eq_SeibergWitten} must involve $\Abs{\Phi_n}_{L^2}$ becoming unbounded.
In light of this, it is convenient to pass to the following equivalent equation.

\begin{definition}
  The \defined{blown-up generalized Seiberg--Witten equation} associated with the data $(G,H,\rho)$ and $(M,g,\fs,B)$ is the following partial differential equation for
  $A \in \sA(\fs,B)$,
  $\Phi \in \Gamma(\bS)$, and
  $\epsilon \in (0,\infty)$:
  \begin{gather}
    \label{Eq_UnnormalizedBlownupSeibergWitten}
    \slD_A\Phi = 0, \quad
    \epsilon^2 F_{\Ad(A)} = \mu(\Phi), \qand  \\
    \label{Eq_NormalizationOfTheSpinor}
    \Abs{\Phi}_{L^2} = 1.
    \qedhere
  \end{gather}
\end{definition}

The main result of this article is the following abstract compactness theorem.

\begin{definition}
  Given $\Phi \in \bS$,
  define $\Gamma_\Phi\co \Wedge^2\Tstar M \to \bS$ by
  \begin{equation*}
    \Gamma_\Phi \coloneq \bgamma(\cdot)\Phi.
    \qedhere
  \end{equation*}
\end{definition}

\begin{remark}
  $\Gamma_\Phi$ is one-half times the adjoint of $\rd_\Phi\mu$.
\end{remark}

\begin{hypothesis}
  \label{Hyp_GammaFControlsF}
  There are constants $r_0,\delta_\mu,c > 0$ and $\Lambda \geq 0$ such that the following holds for every $x \in M$ and $r \in (0,r_0]$.
  If $A \in \sA(\fs,B)$,
  $\Phi \in \Gamma(\bS)$, and
  $\epsilon \in (0,\infty)$
  satisfy \autoref{Eq_UnnormalizedBlownupSeibergWitten},
  \begin{equation*}
    \frac12 \leq \abs{\Phi} \leq 2, \qandq
    \abs{\mu(\Phi)} \leq \delta_\mu,
  \end{equation*}
  on $B_r(x)$,
  then
  \begin{equation}
  \label{Eq_Hypothesis}
    r\int_{B_{r/2}(x)} \abs{F_{\Ad(A)}}^2
    \leq
      \Lambda + cr\int_{B_r(x)} \abs{\Gamma_\Phi F_{\Ad(A)}}^2.
  \end{equation}
\end{hypothesis}

\begin{remark}
  \label{Rmk_GeometricCriterion}  
  \autoref{Hyp_GammaFControlsF} (with $\Lambda = 0$) is implied by the following condition:
  there are constants $\delta,c > 0$ such that,
  for every $\Phi \in \bS$ with $\abs{\Phi} = 1$ and $\abs{\mu(\Phi)} \leq \delta$,
  \begin{equation}
    \label{Eq_GeometricCriterion}
    \abs{\mu(\Phi)} \leq c\abs{\Gamma_\Phi\mu(\Phi)}.
    \qedhere
  \end{equation}
\end{remark}

\begin{remark}
  The condition in \autoref{Rmk_GeometricCriterion} holds if $\mu^{-1}(0)$ is cut-out transversely away from the origin;
  that is: for every non-zero $\Phi \in \mu^{-1}(0)$, $\rd_\Phi\mu$ is surjective and, therefore, $\Gamma_\Phi$ is injective.
  This is the case for the quaternionic representation $\U(1) \to \Sp(\H^n)$ which induces the Seiberg--Witten equation with multiple spinors.
  Therefore, \autoref{Thm_AbstractCompactness} recovers \cite[Theorem 1.5]{Haydys2014}.
\end{remark}

\begin{remark}
  For the of the adjoint representation $G \to \Sp(\fg\otimes\H)$,
  $\mu^{-1}(0)$ is never cut-out transversely away from the origin.
  Nevertheless, \autoref{Lem_GammaXiMuControlsMu} shows that the algebraic criterion in \autoref{Rmk_GeometricCriterion} is satisfied for $G = \SO(3)$ and $G = \SU(2)$.
  Therefore, \autoref{Thm_AbstractCompactness} applies to stable flat $\PSL_2(\C)$--connections over $3$--manifolds;
  cf. \autoref{Rmk_PSL2CCompactness}.
\end{remark}

\begin{theorem}
  \label{Thm_AbstractCompactness}
  Suppose \autoref{Hyp_GammaFControlsF} holds.
  If $(A_n,\Phi_n,\epsilon_n)_{n \in \N}$ is a sequence of solutions of \autoref{Eq_UnnormalizedBlownupSeibergWitten} and \autoref{Eq_NormalizationOfTheSpinor} with $\epsilon_n$ tending to zero,
  then the following hold:
  \begin{enumerate}
  \item
    There is a closed, nowhere-dense subset $Z \subset M$,
    a connection $A \in \sA(\fs|_{M\setminus Z},B)$, and
    a spinor $\Phi \in \Gamma(M\setminus Z,\bS)$ such that the following hold:
    \begin{enumerate}
    \item
      $A$ and $\Phi$ satisfy
      \begin{equation}
        \label{Eq_LimitingSeibergWitten}
        \begin{split}
          \slD_A\Phi
          &= 0, \\
          \mu(\Phi)
          &=
          0, \qand \\
          \Abs{\Phi}_{L^2}
          &=
          1.
        \end{split}
      \end{equation}
    \item
      The function $\abs{\Phi}$ extends to a Hölder continuous function on all of $M$ and
      \begin{equation*}
        Z = \abs{\Phi}^{-1}(0).
      \end{equation*}
    \end{enumerate}
  \item
    After passing to a subsequence and up to gauge transformations,
    for every compact subset $K\subset M\setminus Z$,
    $\paren{A_n|_K}_{n \in \N}$ converges to $A$ in the weak $W^{1,2}$ topology,
    $\paren{\Phi_n|_K}_{n \in \N}$ converges to $\Phi$ in the weak $W^{2,2}$ topology,
    and there exists an $\alpha \in (0,1)$ such that $(\abs{\Phi_n})_{n \in \N}$ converges to $\abs{\Phi}$ in the $C^{0,\alpha}$ topology.
  \end{enumerate}
\end{theorem}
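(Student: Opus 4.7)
The plan is to follow the Taubes--Haydys blow-up strategy, with \autoref{Hyp_GammaFControlsF} supplying the algebraic input needed to promote integral smallness of $\mu(\Phi_n)$ to a local $L^2$ bound on $F_{\Ad(A_n)}$ away from the concentration locus.

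First I would establish global a priori estimates via the Weitzenböck formula $\slD_A^2 = \nabla_A^*\nabla_A + \bgamma(F_{\Ad(A)}) + \bgamma(F_B) + \tfrac{\text{scal}}{4}$ applied to $\Phi_n$ (using $\slD_{A_n}\Phi_n = 0$) and paired with $\Phi_n$, using the moment map identity $\langle \bgamma(\alpha)\Phi,\Phi\rangle = 2\langle \alpha,\mu(\Phi)\rangle$. This yields
\begin{equation*}
    \int_M |\nabla_{A_n}\Phi_n|^2 + 2\epsilon_n^{-2}\int_M |\mu(\Phi_n)|^2 \leq C\Abs{\Phi_n}_{L^2}^2 = C,
\end{equation*}
so $\Abs{\nabla_{A_n}\Phi_n}_{L^2}$ is bounded and $\Abs{\mu(\Phi_n)}_{L^2} \leq C\epsilon_n \to 0$. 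The pointwise version of the same identity reads
\begin{equation*}
    \tfrac{1}{2}\Delta|\Phi_n|^2 + |\nabla_{A_n}\Phi_n|^2 + 2\epsilon_n^{-2}|\mu(\Phi_n)|^2 \leq C|\Phi_n|^2,
\end{equation*}
exhibiting $|\Phi_n|^2$ as a non-negative subsolution of $\Delta u \leq Cu$. Moser iteration gives a uniform $L^\infty$ bound, and De Giorgi--Nash--Moser (exploiting the gradient term) yields a uniform $C^{0,\alpha}$ bound on $|\Phi_n|$ for some $\alpha\in(0,1)$.

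Passing to a subsequence, Arzelà--Ascoli provides $|\Phi_n|\to f$ in $C^{0,\alpha'}$ for every $\alpha'<\alpha$, with $f$ Hölder continuous and $\Abs{f}_{L^2}=1$. I would then set $Z := f^{-1}(0)$, which is automatically closed; the delicate task is to show $Z$ is nowhere dense. I would argue by contradiction: assuming $Z$ has nonempty interior $U$, rescale $(\Phi_n,\epsilon_n)\mapsto(\Phi_n/M_n,\epsilon_n/M_n)$ where $M_n:=\sup_U|\Phi_n|\to 0$ (this rescaling preserves \autoref{Eq_UnnormalizedBlownupSeibergWitten}), and apply a frequency-type monotonicity formula for the rescaled spinors on suitably rescaled balls, of the type developed in \cite{Taubes2012,Haydys2014}, to extract a nontrivial limiting harmonic spinor vanishing on an open set. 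Aronszajn-type unique continuation for first-order elliptic systems then produces the contradiction.

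Away from $Z$, compactness is powered by \autoref{Hyp_GammaFControlsF}. On a compact $K\subset M\setminus Z$, $C^{0,\alpha'}$-convergence gives $|\Phi_n|\geq c_K>0$ uniformly for $n$ large; a local Harnack argument applied to the subsolution inequality (combined with the uniform lower bound) promotes the integral decay $\Abs{\mu(\Phi_n)}_{L^2}\to 0$ to a uniform pointwise decay of $|\mu(\Phi_n)|$ on $K$, so in particular $|\mu(\Phi_n)|\leq \delta_\mu$ there. Cover $K$ by balls of radius $r<r_0$ on which $|\Phi_n|$ oscillates by a factor less than $4$; after a ball-dependent rescaling normalising $1/2\leq|\Phi_n|\leq 2$, \autoref{Hyp_GammaFControlsF} applies and yields a local $L^2$ bound on $F_{\Ad(A_n)}$ in terms of $\Abs{\Gamma_{\Phi_n}F_{\Ad(A_n)}}_{L^2}$. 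By the Weitzenböck identity, $\Gamma_{\Phi_n}F_{\Ad(A_n)} = \bgamma(F_{\Ad(A_n)})\Phi_n = -\nabla_{A_n}^*\nabla_{A_n}\Phi_n + O(|\Phi_n|)$; a cutoff integration by parts pairing $\Gamma_{\Phi_n}F_{\Ad(A_n)}$ against itself and using the global bound on $\Abs{\nabla_{A_n}\Phi_n}_{L^2}$ controls the right-hand side without needing an a priori $W^{2,2}$ bound. This gives $\Abs{F_{\Ad(A_n)}}_{L^2_\loc(M\setminus Z)} \leq C$. Uhlenbeck's theorem then produces local Coulomb gauges in which $A_n$ is bounded in $W^{1,2}_\loc$, the Dirac equation upgrades $\Phi_n$ to $W^{2,2}_\loc$ by elliptic regularity, and a diagonal subsequence furnishes the limit $(A,\Phi)$ satisfying \autoref{Eq_LimitingSeibergWitten}, with $\mu(\Phi) = \lim \epsilon_n^2 F_{\Ad(A_n)} = 0$.

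The main obstacle I anticipate is the nowhere-denseness of $Z$: formulating a frequency-type monotonicity at the correct level of generality (independent of the specific $(G,H,\rho)$) and running the rescaled extraction argument in a way that keeps track of the interaction between the global normalization $\Abs{\Phi_n}_{L^2}=1$ and the Weitzenböck structure is the technical heart of the proof. A secondary but nontrivial point is the cutoff integration by parts that bounds $\Gamma_{\Phi_n}F_{\Ad(A_n)}$ in $L^2$ using only the global $W^{1,2}$ control on $\Phi_n$, since the curvature $F_{\Ad(A_n)}$ itself carries no a priori global $L^2$ bound.
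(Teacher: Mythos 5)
Your overall strategy is the right one in outline, but two of your load-bearing claims do not hold as stated, and they are precisely the points where the paper has to work hardest. First, the uniform $C^{0,\alpha}$ bound on $\abs{\Phi_n}$ cannot be extracted from the differential inequality $\tfrac12\Delta\abs{\Phi_n}^2 + \abs{\nabla_{A_n}\Phi_n}^2 + 2\epsilon_n^{-2}\abs{\mu(\Phi_n)}^2 \leq C\abs{\Phi_n}^2$ by De Giorgi--Nash--Moser: this inequality only exhibits $\abs{\Phi_n}^2$ as a (sub)solution from one side, which yields the mean-value/$L^\infty$ bound (this is \autoref{Prop_LInftyBounds}) but not equicontinuity --- uniformly bounded smooth almost-subharmonic functions can oscillate by $O(1)$ on arbitrarily small balls, so no uniform Hölder seminorm follows. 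In the paper the Hölder bound (\autoref{Prop_HölderBound}) is a consequence of the regularity-scale estimate $r_A(x) \geq \min\set{c^{-1}\abs{\Phi}(x)^{1/\delta}, r_{-1}}$ of \autoref{Prop_RegularityScaleBound}, which uses the whole frequency-function apparatus together with \autoref{Hyp_GammaFControlsF}; it is not a soft scalar estimate, and if it were, the theorem would not need the hypothesis at the stage where you invoke it. The same ordering problem infects your step producing pointwise smallness of $\abs{\mu(\Phi_n)}$ on $K$ ``by Harnack'': in the paper this is \autoref{Prop_LInftyBoundMu}, whose proof needs the $W^{2,2}$ bounds of \autoref{Prop_W22Bounds}, which in turn assume a curvature bound $r\int_{B_r}\abs{F_{\Ad(A)}}^2 \leq c_F$ and a frequency bound at that scale. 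The paper resolves this apparent circularity by a self-improvement scheme --- a curvature bound at scale $r$ plus small frequency gives the pointwise hypotheses of \autoref{Hyp_GammaFControlsF} at scale $r/2$ and hence an improved curvature bound at scale $r/4$ (\autoref{Prop_CurvatureDecay}) --- which is converted into an actual interior bound by the covering argument of \autoref{Lem_DecayImpliesInteriorBound}, with the frequency controlled from below in terms of $\abs{\Phi}(x)$ via \autoref{Prop_PhiControlsN} and \autoref{Prop_NDependenceOnX}. Your proposal contains no substitute for this bootstrap.

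Second, the claim that $\Abs{\Gamma_{\Phi_n}F_{\Ad(A_n)}}_{L^2}$ can be controlled by ``a cutoff integration by parts'' using only the global $L^2$ bound on $\nabla_{A_n}\Phi_n$ cannot work: $\Gamma_\Phi F_{\Ad(A)} = \epsilon^{-2}\Gamma_\Phi\mu(\Phi)$ is a second-order quantity (via the Weitzenböck formula it equals $-\nabla_A^*\nabla_A\Phi - \fK\Phi$), and its $L^2$ norm squared cannot be bounded by first-order data; any integration by parts either reproduces $\int\abs{\nabla_A^2\Phi}^2$-type terms or produces $\nabla_{\Ad(A)}F_{\Ad(A)}$, neither of which you control. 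Moreover, to make \autoref{Hyp_GammaFControlsF} useful one needs $r\int\abs{\Gamma_\Phi F_{\Ad(A)}}^2$ to be \emph{small}, not merely bounded; the paper gets this from \autoref{Prop_CurvatureComponentDecay}, whose proof uses \autoref{Prop_DeltaAbsMuPhi}, the $W^{2,2}$ bounds, and smallness of the frequency $\scN_x^{A,\Phi,\epsilon}(r)$ --- again the bootstrap. Finally, your contradiction argument for nowhere-density (rescale by $\sup_U\abs{\Phi_n}$ and apply unique continuation to a limiting harmonic spinor) is only a sketch, and the extraction step is problematic exactly where it matters: on the interior of $Z$ the regularity-scale bound degenerates, so there is no gauge or curvature control with which to pass the rescaled Dirac equation to a limit. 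The paper instead proves that the localized energies $\int_{B_r(x)}\abs{\nabla_{A_n}\Phi_n}^2 + 2\epsilon_n^{-2}\abs{\mu(\Phi_n)}^2$ and the boundary integrals of $\abs{\Phi_n}^2$ converge (via the $Z_\epsilon$-iteration argument in \autoref{Sec_ProofOfAbstractCompactness}), so that the growth control \autoref{Prop_NControlsGrowthOfM} applies in the limit and propagates vanishing from an open subset of $Z$ to all of $M$, contradicting $\Abs{\Phi}_{L^2}=1$; some argument of this type, working with the scalar quantities $m_x$, $D_x$, $\scN_x$ rather than with limits of spinors over $Z$, is needed to close your step.
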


\begin{remark}
  For many generalized Seiberg--Witten equations,
  including the Seiberg--Witten equation with multiple spinors and stable flat $\PSL_2(\C)$--connections,
  a solution of \autoref{Eq_LimitingSeibergWitten} gives rise to a harmonic $\Z_2$ spinor whose zero locus is precisely $Z$;
  cf. \cite{Taubes2014} and \autoref{Sec_ConclusionOfProofOfADHM12SeibergWittenCompactness}.
  In this case, \citet[Theorem 1.4]{Zhang2017} proved that $Z$ is $\sH^1$--rectifiable
  and and has finite $1$--dimensional Minkowski content.%
  \footnote{%
    See \cites[Chapter 15]{Mattila1995}[Chapter 4]{DeLellis2008} for discussions of rectifiability.
    A subset $Z \subset M$ is said to have finite $1$--dimensional Minkowski content if there is a constant $c > 0$ such that $\vol(\set{ x \in M : d(x,Z) < r }) \leq cr$.
    }
\end{remark}

\subsection{A compactness theorem for the ADHM\texorpdfstring{$_{1,2}$}{12} Seiberg--Witten equation}
\label{Sec_ADHM12SeibergWittenCompactness}

Let us discuss \autoref{Ex_ADHMSeibergWitten} for $r=1$ and $k=2$ in more detail.
Decomposing $\fu(2) = \su(2)\oplus\fu(1)$,
$S = S_{1,2}$ can be written as
\begin{equation*}
  S = S_\circ \oplus \H\otimes_\R\fu(1)
  \qwithq
  S_\circ \coloneq \H\otimes_\C \C^2 \oplus \H\otimes_\R \su(2).
\end{equation*}
$\U(2)$ acts trivially on $\H\otimes_\R\fu(1)$;
hence, the moment map $\mu\co S \to \fu(2) \otimes \Im\H$ factors through the projection of $S$ onto $S_\circ$.

Let $(M,g)$ be a closed Riemannian $3$--manifold.
A spin$^{\Sp(1)\times\U(2)}$ structure $\fs$ on $(M,g)$ is nothing but a spin$^{\U(2)}$ structure $\fw$ and a Euclidean vector bundle $N$ of rank $4$ together with an orientation-preserving isometry
\begin{equation*}
  \Wedge^+N \iso TM.
\end{equation*}
Set
\begin{equation*}
  W \coloneq \fw\times_{\Spin^{\U(2)}(3)} \H\otimes_\C\C^2 \qandq
  \Ad(\fw)_\circ \coloneq \fw\times_{\Spin^{\U(2)}(3)} \su(2).
\end{equation*}
The spinor bundle $\bS$ and the flavor bundle $\ff$ associated with $\fs$ are
\begin{equation*}
  \bS = W \oplus N\otimes \Ad(\fw)_{\circ} \oplus N\otimes i\ubR \qandq
  \ff = \SO(\Wedge^-N).
\end{equation*}
Given a connection $B$ on $\SO(\Lambda^-N)$,
every connection on $\Ad(\fw)$ uniquely lifts to a spin connection on $\fs$.

The above discussion shows that,
having fixed $B$,
the ADHM$_{1,2}$ Seiberg--Witten equation is the following partial differential equation for $A \in \sA(\Ad(\fw))$, $\Psi \in \Gamma(W)$, and $\bxi \in \Gamma(N\otimes\Ad(\fw)_\circ)$:
\begin{equation}
  \label{Eq_ADHM12SeibergWitten}
  \begin{split}
    \slD_A\Psi
    &=
    0 \\
    \slD_{A,B}\bxi
    &=
    0, \qand \\
    F_A
    &=
    \mu(\Psi,\bxi),
  \end{split}  
\end{equation}
as well as the Dirac equation for $\eta \in \Gamma(N\otimes i\ubR)$:
\begin{equation}
  \label{Eq_Dirac}
  \slD_B \eta = 0.
\end{equation}
The equations \autoref{Eq_ADHM12SeibergWitten} and \autoref{Eq_Dirac} are completely decoupled.
The compactness problem for \autoref{Eq_Dirac} is trivial:
after renormalization every sequence has a subsequence which converges in the $C^\infty$ topology.
Of course,
\autoref{Prop_CompactnessProvidedUniformL2ZBounds} applies to the ADHM$_{1,2}$ Seiberg--Witten equation \autoref{Eq_ADHM12SeibergWitten}.
The following result concerns the case in which the hypothesis of  \autoref{Prop_CompactnessProvidedUniformL2ZBounds} is not satisfied.

\begin{theorem}
  \label{Thm_ADHM12SeibergWittenCompactness}
  If $(A_n,\Psi_n,\bxi_n)_{n \in \N}$ is a sequence of solutions of \autoref{Eq_ADHM12SeibergWitten} with
  \begin{equation*}
    \liminf_{n\to\infty}\, \Abs{(\Psi_n,\bxi_n)}_{L^2} = \infty,
  \end{equation*}
  then the following hold:
  \begin{enumerate}
  \item
    There is a closed $\sH^1$--rectifiable subset $Z \subset M$ with finite $1$--dimensional Minkowski content,
    a connection $A \in \sA(\fw|_{M\setminus Z})$,
    a spinor $\Psi \in \Gamma(M\setminus Z,W)$,
    a section $\bxi \in \Gamma(M\setminus Z,N\otimes\Ad(\fw)_\circ)$,
    a flat Euclidean line bundle $\fl$ over $M\setminus Z$, and
    a non-zero $\tau \in \Gamma(M\setminus Z,\Hom(\fl,\Ad(\fw)_\circ))$ such that the following hold:
    \begin{enumerate}
    \item
      $A$ and $\bxi$ satisfy
      \begin{equation}
        \label{Eq_LimitingADHM12SeibergWitten_Xi}
        \begin{split}
          \slD_{A,B}\bxi
          &= 0, \\
          \mu(\bxi)
          &=
          0, \qand \\
          \Abs{\bxi}_{L^2}
          &=
          1.
        \end{split}
      \end{equation}      
    \item
      The function $\abs{\bxi}$ extends to a Hölder continuous function on all of $M$ and
      \begin{equation*}
        Z = \abs{\bxi}^{-1}(0).
      \end{equation*}
    \item
      The section $\tau$ is parallel with respect to $A$.
    \item
      Set $\ft \coloneq \im \tau \oplus i\ubR \subset \Ad(\fw)|_{M\setminus Z}$ and denote by $\pi_\ft\co \Ad(\fw)|_{M\setminus Z} \to \ft$ the orthogonal projection onto $\ft$.
      $A$ and $\Psi$ satisfy
      \begin{equation}
        \label{Eq_LimitingADHM12SeibergWitten_Psi}
        \begin{split}
          \slD_A\Psi
          &= 0 \qand \\
          F_A
          &=
          \pi_\ft\mu(\Psi).
        \end{split}
      \end{equation}
    \end{enumerate}    
  \item
    Set
    \begin{equation*}
      \epsilon_n
      \coloneq
        \frac{1}{\Abs{(\Psi_n,\bxi_n)}_{L^2}}, \quad
      \tilde\Psi_n
      \coloneq
        \epsilon_n\Psi_n, \qandq
      \tilde\bxi_n
      \coloneq
        \epsilon_n\bxi_n.
    \end{equation*}
    After passing to a subsequence and up to gauge transformations,
    for every compact subset $K \subset M\setminus Z$,
    $\paren{A_n|_K}_{n \in \N}$ converges to $A$ in the weak $W^{1,2}$ topology,
    $\paren{\Psi_n|_K}_{n \in \N}$ converges to $\Psi$ in the weak $W^{2,2}$ topology,
    $\paren{\tilde\bxi_n|_K}_{n \in \N}$ converges to $\bxi$ in the $W_\loc^{2,2}$ topology, and
    there exists an $\alpha \in (0,1)$ such that $\paren{\abs{(\tilde\Psi_m,\tilde\bxi_n)}}_{n \in \N}$ converges to $\abs{\bxi}$ in the $C^{0,\alpha}$ topology.   
  \end{enumerate}
\end{theorem}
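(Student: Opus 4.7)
My plan is to reduce \autoref{Thm_ADHM12SeibergWittenCompactness} to \autoref{Thm_AbstractCompactness} applied to an $L^2$-rescaled sequence, and then to extract the additional reduction structure from an algebraic analysis of $\mu^{-1}(0)\subset S_\circ$. With $\epsilon_n,\tilde\Psi_n,\tilde\bxi_n$ as in the statement, the triples $\paren*{A_n,(\tilde\Psi_n,\tilde\bxi_n),\epsilon_n}$ solve \autoref{Eq_UnnormalizedBlownupSeibergWitten} for the ADHM$_{1,2}$ data with $\Abs*{(\tilde\Psi_n,\tilde\bxi_n)}_{L^2}=1$ and $\epsilon_n\to 0$. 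A preparatory task, to be handled separately in the spirit of \autoref{Lem_GammaXiMuControlsMu}, is to verify \autoref{Hyp_GammaFControlsF} for this representation; given that, \autoref{Thm_AbstractCompactness} produces a closed nowhere-dense $Z\subset M$, a connection $A$ on $\fw|_{M\setminus Z}$, and a limit spinor $(\tilde\Psi,\tilde\bxi)$ with $\slD_A\tilde\Psi=0$, $\slD_{A,B}\tilde\bxi=0$, $\mu(\tilde\Psi,\tilde\bxi)=0$, $\Abs*{(\tilde\Psi,\tilde\bxi)}_{L^2}=1$, and Hölder continuous total norm whose zero set is exactly $Z$, together with the claimed weak convergences on compact subsets of $M\setminus Z$.

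Next, a pointwise analysis of $\mu^{-1}(0)\subset S_\circ$ — using the explicit hyperkähler moment maps for the adjoint $\SU(2)$-action on $\H\otimes_\R\su(2)$ and for the $\U(2)$-action on $\H\otimes_\C\C^2$ together with the cross-term — shows that any $(\Psi,\bxi)\in\mu^{-1}(0)$ with $\bxi\neq 0$ has $\bxi\in\H\otimes\R\tau_0$ for some unit $\tau_0\in\su(2)$, while $\Psi$ is forced into a distinguished $\U(1)$-invariant subspace. Combining this pointwise classification with the Dirac equation $\slD_A\tilde\Psi=0$, the vanishing of $\abs*{(\tilde\Psi,\tilde\bxi)}$ on $Z$, and a unique-continuation argument applied to $\tilde\Psi$, I obtain $\tilde\Psi\equiv 0$ on $M\setminus Z$; setting $\bxi:=\tilde\bxi$ then gives (a) and (b). Globally, the sign-ambiguous direction $\tau_0(x)$ assembles into a unit section $\tau\in\Gamma(M\setminus Z,\Hom(\fl,\Ad(\fw)_\circ))$ for a flat real line bundle $\fl$. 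Writing $\bxi=\tau\cdot\eta$ locally with $\eta\in\Gamma(N\otimes\fl)$, the equation $\slD_{A,B}\bxi=0$ unpacks as a Dirac equation for $\eta$ together with $\nabla^A\tau=0$, proving (c). Since $(\fl,\eta)$ is a harmonic $\Z_2$-spinor in the sense of \cite{Taubes2014} with zero set exactly $Z$, \cite[Theorem 1.4]{Zhang2017} supplies the $\sH^1$-rectifiability and finite $1$-dimensional Minkowski content of $Z$.

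To recover convergence of the unnormalized $\Psi_n$, set $\ft:=\im\tau\oplus i\ubR$; by the reduction, $F_A$ takes values in $\ft$ on $M\setminus Z$. From $F_{A_n}=\mu(\Psi_n,\bxi_n)$, the $\ft^\perp$-component of $F_{A_n}$ converges to zero in $L^2_\loc$. Expanding $\mu(\Psi_n,\bxi_n)$ via its quadratic-plus-bilinear structure and using that $\bxi_n$ is asymptotically contained in $N\otimes\im\tau$ with $\abs*{\bxi_n}\sim 1/\epsilon_n$, the $\ft^\perp$-piece contains a bilinear coupling between $\Psi_n$ and the non-degenerate factor $\bxi_n$; this coupling furnishes a uniform $L^2_\loc$-bound on $\Psi_n$ over compact subsets of $M\setminus Z$. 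Elliptic bootstrapping for $\slD_{A_n}\Psi_n=0$ upgrades this to weak $W^{2,2}_\loc$-convergence to a limit $\Psi$, and passing to the $\ft$-component of the curvature equation yields $F_A=\pi_\ft\mu(\Psi)$, completing (d).

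The hardest ingredients, I expect, are those in the first two paragraphs: verifying \autoref{Hyp_GammaFControlsF} for the ADHM$_{1,2}$ representation, classifying $\mu^{-1}(0)$ on the stratum $\bxi\neq 0$, and assembling $(\fl,\tau)$ globally as parallel data, together with the unique-continuation-type step that forces $\tilde\Psi\equiv 0$. The uniform $L^2_\loc$-bound on the unnormalized $\Psi_n$ is similarly delicate, since it rests on the precise off-diagonal structure of $\mu$ rather than on direct elliptic control.
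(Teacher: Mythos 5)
Your overall architecture (rescale by the $L^2$--norm, apply \autoref{Thm_AbstractCompactness}, split $\bxi=\tau\otimes\nu$ à la Haydys to get the flat line bundle and the parallel section, and invoke \cite[Theorem 1.4]{Zhang2017} for the rectifiability of $Z$) is the same as the paper's. However, the two steps you rely on for the genuinely new content do not work as proposed. The most serious is your mechanism for part (2)/(d): there is no ``bilinear coupling between $\Psi_n$ and $\bxi_n$'' inside the moment map. The ADHM$_{1,2}$ representation is a \emph{direct sum} of quaternionic representations of $\U(2)$, so $\mu(\Psi,\bxi)=\mu(\Psi)+\mu(\bxi)$ with no cross term, and the $\ft^\perp$--component of $F_{A_n}$ contains nothing of size $\abs{\Psi_n}\abs{\bxi_n}$; the proposed uniform $L^2_\loc$--bound on the unnormalized $\Psi_n$ therefore has no source. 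The paper instead extracts this bound purely from the diagonal terms: $\abs{\pi_\ft\mu(\Psi)}=\tfrac12\abs{\Psi}^2$ for every maximal torus (\autoref{Prop_AbsPiTMuPsi}) together with the no-cancellation estimate $\abs{\mu(\Psi)}+\abs{\mu(\bxi)}\leq c\abs{\mu(\Psi,\bxi)}$ (\autoref{Prop_MuPsiXiBoundsMuPsi+MuXi}) give $\abs{\Psi_n}^2\leq c\abs{F_{A_n}}$ pointwise, hence local $L^4$ bounds from the local $L^2$ curvature bounds. Moreover, to obtain $F_A=\pi_\ft\mu(\Psi)$ one must dispose of the term $\epsilon_n^{-2}\pi_{\ft_n}\mu(\hat\bxi_n)$ coming from $\mu(\bxi_n)$, which requires the quantitative decomposition $\tilde\bxi_n=\bzeta_n+\hat\bxi_n$ with $\abs{\hat\bxi_n}\leq c\epsilon_n^2\abs{F_{A_n}}$; your sketch (``pass to the $\ft$--component'') does not address this.

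Two further gaps. First, \autoref{Hyp_GammaFControlsF} cannot be verified ``in the spirit of \autoref{Lem_GammaXiMuControlsMu}'': the pointwise criterion of \autoref{Rmk_GeometricCriterion} genuinely fails for this representation. For example, take $\bxi=\tau_0\otimes v$ with $\mu(\bxi)=0$ and $\Psi$ small and aligned so that $\mu(\Psi)\in\Im\H\otimes\ft$ for the torus $\ft$ containing $\tau_0$; then $\abs{\mu(\Psi,\bxi)}\sim\abs{\Psi}^2$ while $\Gamma_{(\Psi,\bxi)}\mu(\Psi,\bxi)=\bgamma(\mu(\Psi))\Psi=O(\abs{\Psi}^3)$, since $\rho(\ft)$ annihilates $\bxi$. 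This is why the paper's \autoref{Lem_HypothesisForADHM12SW} is not a pointwise algebraic statement: it uses the Lichnerowicz--Weitzenböck formula for $\Psi$ and the refined inequality \autoref{Prop_MuPsiXiEstimate} (whose good term $\inner{\mu(\Psi,\bxi)}{\mu(\Psi)}$ is only useful after integration by parts), and it yields the hypothesis with $\Lambda>0$. Second, your classification of $\mu^{-1}(0)$ on the stratum $\bxi\neq0$ is weaker than the truth and the patch does not close: by \autoref{Prop_MuPsiXi=0ImpliesPsi=0}, $\mu(\Psi,\bxi)=0$ forces $\Psi=0$ outright, so $\tilde\Psi\equiv0$ is immediate and pointwise; a unique-continuation argument cannot substitute for this, because $\tilde\Psi$ is not known a priori to vanish on any open set. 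This last point is easily repaired by quoting the correct algebraic fact, but as written that step, and with it items (a), (b), and (d), are not established.
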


\begin{remark}
  We emphasize that $(\Psi_n)_{n \in \N}$ converges \emph{without rescaling}.
\end{remark}

\begin{remark}
  \label{Rmk_PSL2CCompactness}
  \autoref{Thm_ADHM12SeibergWittenCompactness} with $\Psi_n = 0$ recovers Taubes' compactness theorem for stable flat $\PSL_2(\C)$--connections over $3$--manifolds \cite{Taubes2012}.
  In fact, it also shows that the limiting connection $A$ is flat.
\end{remark}

\begin{remark}
  \autoref{Thm_ADHM12SeibergWittenCompactness} partially verifies \cite[Conjecture 5.26]{Doan2017d}.
  It is explained in \cite[Section 5]{Doan2017d} that:
  the joint spectrum of the section $\bxi$ provides a double cover $\pi\co \tilde M\setminus\tilde Z \to M\setminus Z$, which extends to a branched cover of $M$;
  the spin$^{\U(2)}$ structure on $M\setminus Z$ is the push-forward of a spin$^c$ structure on $\tilde M\setminus\tilde Z$;
  and is $(A,\Psi)$ the push-forward of a solution of the Seiberg--Witten equation on $\tilde M\setminus\tilde Z$.
  The interested reader is referred to \cite[Sections 4, 5, 6, and 7]{Doan2017d} for a discussion of how the ADHM$_{1,k}$ Seiberg--Witten equation is expected help in dealing with multiple cover phenomena for associatives in $\Gtwo$--manifolds and pseudo-holomorphic curves in symplectic Calabi--Yau $3$--folds.
\end{remark}

\paragraph{Conventions}
Throughout,
fix a set of algebraic data $(G,H,\rho)$ and a compatible set of geometric data $(M,g,\fs,B)$ with $M$ closed.
As is customary, $c > 0$ denotes a universal constant whose value might change from on appearance to the next and which depends only on the chosen algebraic and geometric data.
Moreover, $r_0 > 0$ denotes a constant which is much smaller than the injectivity radius and at least as small as the constant appearing in \autoref{Hyp_GammaFControlsF}.

\paragraph{Acknowledgements}

This material is based upon work supported by  \href{https://www.nsf.gov/awardsearch/showAward?AWD_ID=1754967&HistoricalAwards=false}{the National Science Foundation under Grant No.~1754967}
and
\href{https://sloan.org/grant-detail/8651}{an Alfred P. Sloan Research Fellowship}.


\section{The Lichnerowicz--Weitzenböck formula}
\label{Sec_LichnerowiczWeitzenbock}

This section derives a number of consequences of the Lichnerowicz--Weitzenböck formula.
Let us begin by reminding the reader of the latter.

\begin{definition}
  Define $\fR \in \Omega^2(M,\End(\bS))$ and $\fK \in \Gamma(\End(\bS))$ by
  \begin{align*}
    \fR\Phi
    \coloneq
    \frac14\sum_{i,j=1}^3 \inner{R(\cdot,\cdot)e_i}{e_j}\gamma(e_i)\gamma(e_j)\Phi
    + F_B
    \qandq
    \fK \Phi
    \coloneq
    \gamma(\fR)\Phi.
    \qedhere
  \end{align*}
\end{definition}

\begin{prop}[Lichnerowicz--Weitzenböck formula]
  \label{Prop_LichnerowiczWeitzenbock}
  For every $A \in \sA(\fs,B)$ and $\Phi \in \Gamma(\bS)$,
  \begin{equation}
    \label{Eq_LichnerowiczWeitzenbock}
    \slD_A^2\Phi
    = \nabla_A^*\nabla_A\Phi
    + \bgamma(F_{\Ad(A)})\Phi
    + \fK\Phi.
  \end{equation}
\end{prop}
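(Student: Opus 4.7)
\medskip

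\noindent\textbf{Proof proposal.} The identity is pointwise, so fix $x\in M$ and work in a local synchronous orthonormal frame $e_1,e_2,e_3$, i.e.\ one with $\nabla_{e_i}e_j|_x = 0$. My plan is the standard two-step derivation: first square the Dirac operator formally, and then identify the curvature contributions via the Clifford relations.

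First I would unwind the definition of $\slD_A$ and compute
\begin{equation*}
  \slD_A^2\Phi
  =
  \sum_{i,j=1}^3 \gamma(e_j)\nabla_{A,e_j}\bigl(\gamma(e_i)\nabla_{A,e_i}\Phi\bigr).
\end{equation*}
Because $A$ is a spin connection, $\nabla_A$ is compatible with Clifford multiplication, so the derivative falling on $\gamma(e_i)$ produces a term involving $\gamma(\nabla_{e_j}e_i)$, which vanishes at $x$ by the choice of frame. Hence at $x$
\begin{equation*}
  \slD_A^2\Phi
  =
  \sum_{i,j} \gamma(e_j)\gamma(e_i)\,\nabla_{A,e_j}\nabla_{A,e_i}\Phi.
\end{equation*}

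Next I would split the double sum using $\gamma(e_i)\gamma(e_j)+\gamma(e_j)\gamma(e_i) = -2\delta_{ij}$. The diagonal terms ($i=j$) contribute $-\sum_i \nabla_{A,e_i}\nabla_{A,e_i}\Phi$, which equals $\nabla_A^*\nabla_A\Phi$ at $x$ because $\nabla_{e_i}e_i|_x = 0$. Pairing the off-diagonal terms $(i,j)$ with $(j,i)$ turns the sum into
\begin{equation*}
  \tfrac12\sum_{i,j} \gamma(e_j)\gamma(e_i)\bigl[\nabla_{A,e_j},\nabla_{A,e_i}\bigr]\Phi
  =
  \tfrac12 \sum_{i,j} \gamma(e_j)\gamma(e_i)\, F_A^{\bS}(e_j,e_i)\Phi,
\end{equation*}
where $F_A^{\bS}\in\Omega^2(M,\End(\bS))$ is the curvature of the induced connection on $\bS$.

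The last and only genuinely non-formal step is to decompose $F_A^\bS$ according to the three factors of $\Spin^H(3) = (\Sp(1)\times H)/\Z_2$ and the splitting $H \supset G$ with quotient carrying $B$:
\begin{equation*}
  F_A^{\bS}(e_j,e_i)
  =
  \tfrac14\sum_{k,l} \langle R(e_j,e_i)e_k,e_l\rangle\,\gamma(e_k)\gamma(e_l)
  \;+\; \rho\bigl(F_{\Ad(A)}(e_j,e_i)\bigr)
  \;+\; F_B(e_j,e_i).
\end{equation*}
Plugging this into the off-diagonal sum, the Riemannian piece together with the $F_B$ piece reassembles precisely into $\gamma(\fR)\Phi = \fK\Phi$ by the definition of $\fR$, while the $\rho(F_{\Ad(A)})$ piece combines with the remaining $\gamma(e_j)$ to give $\bgamma(F_{\Ad(A)})\Phi$ by the definition $\bgamma(v\otimes\xi) = \gamma(v)\rho(\xi)$. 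Summing the three contributions reproduces \autoref{Eq_LichnerowiczWeitzenbock}.

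The only real obstacle is bookkeeping: one must verify that the Clifford-algebraic combinatorics in the off-diagonal sum really do produce $\bgamma$ (not some other pairing) and that the Riemann curvature piece, contracted with two Clifford multiplications, matches the prefactor $\tfrac14$ in the definition of $\fR$. Apart from this sign/combinatorial check, everything is formal consequence of the compatibility of $\nabla_A$ with Clifford multiplication and the curvature identity.
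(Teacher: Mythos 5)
The paper does not prove this proposition at all --- it is quoted as the standard Lichnerowicz--Weitzenböck formula for twisted Dirac operators --- so there is no in-paper argument to compare against; your derivation is the standard one and is correct in outline. The only substantive content, which you correctly identify but defer, is the decomposition of the curvature of the induced connection on $\bS$ into the Riemannian piece $\tfrac14\sum_{k,l}\inner{R(\cdot,\cdot)e_k}{e_l}\gamma(e_k)\gamma(e_l)$, the piece $\rho(F_{\Ad(A)})$, and the flavor piece $F_B$ (coming from the splitting of $\Lie(\Spin^H(3))=\sp(1)\oplus\fh$ and the fact that $A$ induces $B$), together with the check that the contraction $\tfrac12\sum_{i,j}\gamma(e_j)\gamma(e_i)\,(\cdot)(e_j,e_i)$ agrees with the paper's convention for $\gamma$ on $\End(\bS)$-valued $2$--forms (used in $\fK=\gamma(\fR)$) and, via the isomorphism $\Wedge^2\Tstar M\iso TM$, with $\bgamma(F_{\Ad(A)})$; this is routine but is exactly where the factor $\tfrac14$ and the signs live, so a complete write-up should carry it out rather than assert it.
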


If \autoref{Eq_UnnormalizedBlownupSeibergWitten} holds,
then \autoref{Prop_LichnerowiczWeitzenbock} implies
\begin{equation}
  \label{Eq_LaplacianOfAbsPhiSquaredEqualsLowerOrderTerm}
  \frac12\Delta\abs{\Phi}^2
  + \abs{\nabla_A\Phi}^2
  + 2\epsilon^{-2}\abs{\mu(\Phi)}^2
  + \inner{\fK\Phi}{\Phi}
  = 0.
\end{equation}
The following is an immediate consequence of \autoref{Eq_LaplacianOfAbsPhiSquaredEqualsLowerOrderTerm} and integration by parts.

\begin{cor}
  \label{Cor_IntegrationByPartsFormula}
  Let $U$ be an open subset of $M$ with smooth boundary and let $f \in C^\infty(\bar U)$.
  If $A \in \sA(\fs,B)$, $\Phi \in \Gamma(\bS)$, and $\epsilon > 0$ satisfy \autoref{Eq_UnnormalizedBlownupSeibergWitten} on $U$,
  then
  \begin{equation*}
    \int_U
      \frac12 \Delta f \cdot \abs{\Phi}^2
      + f \cdot \paren*{\abs{\nabla_A\Phi}^2 + 2\epsilon^{-2}\abs{\mu(\Phi)}^2}
      =
      - \int_U f\cdot \inner{\fK \Phi}{\Phi}
      + \frac12 \int_{\del U} f\cdot \del_\nu \abs{\Phi^2} - \del_\nu f \cdot \abs{\Phi}^2.
      \qedhere
  \end{equation*}
\end{cor}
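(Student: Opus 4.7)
The statement is an immediate integration-by-parts consequence of the pointwise identity \autoref{Eq_LaplacianOfAbsPhiSquaredEqualsLowerOrderTerm}, which the preceding paragraph already derives from \autoref{Prop_LichnerowiczWeitzenbock} under the blown-up Seiberg--Witten equations. The plan is to multiply that identity by $f$, integrate over $U$, and transfer the second-order derivatives from $\abs{\Phi}^2$ onto $f$ via Green's second identity.

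Concretely, multiplying \autoref{Eq_LaplacianOfAbsPhiSquaredEqualsLowerOrderTerm} by $f$ and integrating over $U$ yields
\begin{equation*}
  \int_U \tfrac12 f \cdot \Delta\abs{\Phi}^2 + \int_U f\cdot\paren*{\abs{\nabla_A\Phi}^2 + 2\epsilon^{-2}\abs{\mu(\Phi)}^2} = -\int_U f\cdot\inner{\fK\Phi}{\Phi}.
\end{equation*}
Every term in this equation already matches a term in the corollary, except for the first summand $\int_U\tfrac12 f\Delta\abs{\Phi}^2$. I would then apply Green's second identity
\begin{equation*}
  \int_U (f \Delta g - g\Delta f) = \int_{\partial U}(g\del_\nu f - f\del_\nu g)
\end{equation*}
with $g = \abs{\Phi}^2$, converting $\int_U f\Delta\abs{\Phi}^2$ into $\int_U\abs{\Phi}^2 \Delta f$ plus the corresponding boundary integral, and move that boundary integral to the right-hand side. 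A sign flip of the boundary contribution then delivers the expression $\tfrac12\int_{\partial U}f\cdot\del_\nu\abs{\Phi}^2 - \del_\nu f\cdot\abs{\Phi}^2$ as written.

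There is no substantive obstacle here; the only point requiring care is keeping sign conventions coherent. The Laplacian appearing in \autoref{Eq_LaplacianOfAbsPhiSquaredEqualsLowerOrderTerm} is the geometer's positive Laplacian $\Delta = \nabla^*\nabla$ on functions (this is what makes the pointwise identity $\tfrac12\Delta\abs{\Phi}^2 = \Re\inner{\nabla_A^*\nabla_A\Phi}{\Phi} - \abs{\nabla_A\Phi}^2$ hold with the sign used above), and the version of Green's identity stated above is the one that matches this convention. The hypothesized smoothness of $\partial U$ and of $f$ on $\bar U$ is enough for the divergence theorem to apply, and no further regularity of $A$ or $\Phi$ is needed beyond what follows from their satisfying the elliptic system \autoref{Eq_UnnormalizedBlownupSeibergWitten} on $U$.
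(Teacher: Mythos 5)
Your proof is correct and follows exactly the route the paper intends: the corollary is stated there as an immediate consequence of \autoref{Eq_LaplacianOfAbsPhiSquaredEqualsLowerOrderTerm} together with integration by parts, which is precisely your multiplication by $f$ followed by Green's second identity. Your sign bookkeeping (positive Laplacian $\Delta=\nabla^*\nabla$, outward normal $\nu$) is the right convention, as one can confirm against the use of the corollary with $f=1$ in the proof of \autoref{Prop_L2Bounds}.
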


\subsection{The frequency function}

The statements of the results derived in this section require the following definitions.

\begin{definition}
  \label{Def_Frequency}
  Given $A \in \sA(\fs)$, $\Phi \in \Gamma(\bS)$, $x \in M$, and $\epsilon > 0$,
  define $m_x^\Phi,D_x^{A,\Phi,\epsilon}\co (0,r_0] \to [0,\infty)$ by
  \begin{align*}
    m_x^\Phi(r)
    &\coloneq
      \frac{1}{4\pi r^2}
      \int_{\del B_r(x)} \abs{\Phi}^2 \qand \\
    D_x^{A,\Phi,\epsilon}(r)
    &\coloneq
      \frac{1}{4\pi r}
      \int_{B_r(x)} \abs{\nabla_A\Phi}^2 + 2\epsilon^{-2}\abs{\mu(\Phi)}^2;
  \end{align*}
  and, furthermore,
  set $r_{-1,x}^\Phi \coloneq \sup \set*{ r \in (0,\infty) : m_x^\Phi(r) = 0 }$ and
  define the \defined{frequency function} $\scN_x^{A,\Phi,\epsilon}\co (r_{-1,x}^\Phi,r_0] \to [0,\infty)$ by
  \begin{equation*}
    \scN_x^{A,\Phi,\epsilon}(r)
    \coloneq
    \frac{D_x^{A,\Phi,\epsilon}(r)}{m_x^\Phi(r)}.
    \qedhere
  \end{equation*}
\end{definition}

\begin{remark}
  A priori, the restriction of the domain of $\scN_x^{A,\Phi,\epsilon}$ is necessary;
  however: it be will shown in \autoref{Prop_M>0} that $r_{-1,x}^\Phi = 0$ unless $\Phi = 0$.
\end{remark}

\begin{remark}
  The frequency function was introduced by \citet{Almgren1979} and is now an ubiquitous tools in the study of elliptic partial differential equations.
  The adaption to generalized Seiberg--Witten equations is due to \citet{Taubes2012}.
\end{remark}

For the purposes of this section we shall be content with just the above definitions.
However, in \autoref{Sec_RegularityScale}, the frequency function plays a pivotal role and its properties will be studied in detail.

\subsection{\texorpdfstring{$L^2$}{L2} bounds on \texorpdfstring{$\Phi$}{Phi}}

\begin{prop}
  \label{Prop_L2Bounds}
  If $A \in \sA(\fs,B)$, $\Phi \in \Gamma(\bS)$, and $\epsilon > 0$ satisfy \autoref{Eq_UnnormalizedBlownupSeibergWitten},
  then, for every $x \in M$ and $r \in (0,r_0]$,
  \begin{equation*}    
    \frac{\pi}{2} m_x^\Phi\paren*{\frac r2}
    \leq 
      r^{-3}\int_{B_r(x)} \abs{\Phi}^2
    \leq
      4\pi m_x^\Phi(r).
  \end{equation*}
\end{prop}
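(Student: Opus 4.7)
The key input is that $|\Phi|^2$ is subharmonic up to a controlled zero-order perturbation. Dropping the non-negative gradient and moment-map contributions in \autoref{Eq_LaplacianOfAbsPhiSquaredEqualsLowerOrderTerm} yields
\[
  \tfrac12\Delta|\Phi|^2 \leq -\inner{\fK\Phi}{\Phi} \leq c|\Phi|^2.
\]
Write $I(s) \coloneq \int_{\partial B_s(x)} |\Phi|^2$ and $V(r) \coloneq \int_{B_r(x)} |\Phi|^2$; the coarea formula gives $V(r) = \int_0^r I(s)\,ds$, and \autoref{Cor_IntegrationByPartsFormula} applied with $f \equiv 1$ bounds $\int_{\partial B_r(x)} \partial_\nu|\Phi|^2$ from below by $-2cV(r)$ after discarding the non-negative terms on the left.

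The plan is to first establish the \emph{approximate monotonicity}
\[
  m_x^\Phi(s) \leq (1 + cr_0)\, m_x^\Phi(r) \qforallq 0 < s \leq r \leq r_0,
\]
and then to deduce the two stated inequalities by integrating against the coarea decomposition. To prove the monotonicity, I differentiate $r^{-2}I(r) = 4\pi m_x^\Phi(r)$ in geodesic normal coordinates at $x$: the Jacobian of the exponential map contributes a factor of $1 + O(r^2)$, and combining with the Stokes estimate above leads to a differential inequality schematically of the form
\[
  \tfrac{d}{dr}\(r^{-2}I(r)\) \geq -2c\, r^{-2}V(r) - c\, r^{-1}\cdot r^{-2}I(r).
\]
A Grönwall-type argument, together with the crude bound $V(r) \leq r^3\,\sup_{0 < s \leq r}(s^{-2}I(s))$, then gives approximate monotonicity provided $r_0$ is sufficiently small.

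Granting this, both inequalities follow from $V(r) = \int_0^r 4\pi s^2(1 + O(s^2))\,m_x^\Phi(s)\,ds$. For the upper bound, almost-monotonicity gives $m_x^\Phi(s) \leq (1 + cr_0)\,m_x^\Phi(r)$ for $s \leq r$, so
\[
  V(r) \leq \tfrac{4\pi}{3}(1 + cr_0)\,r^3\, m_x^\Phi(r) \leq 4\pi r^3\, m_x^\Phi(r)
\]
once $r_0$ is small enough. For the lower bound I restrict to $s \in [r/2, r]$ and use $m_x^\Phi(s) \geq (1 + cr_0)^{-1}\,m_x^\Phi(r/2)$ to obtain
\[
  V(r) \geq \int_{r/2}^r 4\pi s^2 (1 + cr_0)^{-2}\,m_x^\Phi(r/2)\,ds \geq \tfrac{\pi}{2}\,r^3\, m_x^\Phi(r/2).
\]

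The delicate point is the Grönwall step, since one must absorb both the Ricci-curvature correction from the Riemannian volume form and the $\fK$-contribution without spoiling constants. However, the target constants $\tfrac{\pi}{2}$ and $4\pi$ sit strictly on the correct sides of the ``ideal'' subharmonic values $\tfrac{7\pi}{6}$ and $\tfrac{4\pi}{3}$, and the paper's conventions permit shrinking $r_0$ as required.
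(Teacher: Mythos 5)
Your argument is correct in outline and rests on the same two ingredients as the paper's proof of \autoref{Prop_L2Bounds}: the derivative formula for $I(r) \coloneq \int_{\del B_r(x)}\abs{\Phi}^2$ coming from \autoref{Cor_IntegrationByPartsFormula} with $f=1$, and the mean-curvature comparison $H_{x,r} \geq \tfrac{2}{r} - cr$. Where you differ is in how the zeroth-order ($\fK$) term is absorbed. The paper keeps the non-negative term $2\int_{B_r(x)}\abs{\nabla_A\Phi}^2$ and absorbs $c\int_{B_r(x)}\abs{\Phi}^2$ via Hardy's inequality $\int_{B_r(x)}\abs{\Phi}^2 \leq cr^2\int_{B_r(x)}\abs{\nabla_A\Phi}^2 + cr\int_{\del B_r(x)}\abs{\Phi}^2$, concluding that $I$ itself is non-decreasing on $(0,r_0]$; both stated inequalities then drop out of the coarea identity $\int_{B_r(x)}\abs{\Phi}^2 = \int_0^r I(s)\,\ds$ exactly as in your final step. (That identity is exact: $m_x^\Phi$ is defined with the Riemannian boundary integral, so no $1+O(s^2)$ factor is needed there.) You instead discard the gradient term and trade exact monotonicity of $I$ for approximate monotonicity of $m_x^\Phi$ via a Gr\"onwall argument; this is a legitimate alternative, avoiding Hardy's inequality at the cost of the sup-Gr\"onwall bookkeeping, and the slack you point out ($\tfrac{\pi}{2} < \tfrac{7\pi}{6}$ and $4\pi > \tfrac{4\pi}{3}$) is indeed what makes an approximate statement sufficient.

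One correction in the step you yourself flag as delicate: as written, your schematic inequality carries the error term $-c\,r^{-1}\cdot r^{-2}I(r)$, and with an $r^{-1}$ coefficient Gr\"onwall only produces a factor of the form $(r/s)^{c}$, which blows up as $s \to 0$ and does not yield a uniform $1+cr_0$. The correct error term, which is what your own input $H_{x,r} = \tfrac{2}{r} + O(r)$ (equivalently the $1+O(r^2)$ Jacobian) produces after the $\tfrac{2}{r}$ part cancels against the derivative of $r^{-2}$, is $-c\,r\cdot\paren*{r^{-2}I(r)}$, so that $\tfrac{\rd}{\rd r}\paren*{r^{-2}I(r)} \geq -2c\,r^{-2}V(r) - c\,r\,\paren*{r^{-2}I(r)}$ with coefficients integrable at $0$. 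Combining this with $V(t) \leq \tfrac{t^3}{3}\sup_{s\leq t} s^{-2}I(s)$ and taking the running supremum gives $m_x^\Phi(s) \leq (1+cr^2)\,m_x^\Phi(r)$ for $0 < s \leq r \leq r_0$, after which your concluding numerics go through. With that coefficient fixed, the proposal is a sound, mildly different proof of the proposition.
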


\begin{proof}
  Denote by $H_{x,r}$ the mean curvature of $\del B_r(x)$.
  By \autoref{Cor_IntegrationByPartsFormula} with $f = 1$ and $U = B_r(x)$,
  \begin{align*}
    \frac{\rd}{\rd r} \int_{\del B_r(x)} \abs{\Phi}^2
    &=
      \int_{\del B_r(x)} H_{x,r} \abs{\Phi}^2
      + \int_{\del B_r(x)} \del_r\abs{\Phi}^2 \\
    &=
      \int_{\del B_r(x)} H_{x,r} \abs{\Phi}^2
      + 2\int_{B_r(x)} \abs{\nabla_A\Phi}^2 + 2\epsilon^{-2}\abs{\mu(\Phi)}^2 + \inner{\fK \Phi}{\Phi}.
  \end{align*}
  By Hardy's inequality,
  \begin{equation*}
    \int_{B_r(x)} \abs{\Phi}^2
    \leq
    cr^2\int_{B_r(x)} \abs{\nabla_A\Phi}^2
    + cr \int_{\del B_r(x)} \abs{\Phi}^2.
  \end{equation*}
  Therefore and because $H_{x,r} \geq \frac2r - cr$,
  for $r \in [0,r_0]$,
  \begin{equation*}
    \frac{\rd}{\rd r} \int_{\del B_r(x)} \abs{\Phi}^2 \geq 0.
  \end{equation*}
  This implies the assertion.
\end{proof}

\subsection{\texorpdfstring{$L^\infty$}{Linfty} bounds on \texorpdfstring{$\Phi$}{Phi}}

To state the next result,
we define the following variant of the Morrey norm
\begin{equation*}
  \Abs{f}_{L_\star^{p,\lambda}(U)}
  \coloneq
  \sup_{y \in U} \Abs{r_y^{-\lambda/p}f}_{L^p(U)}
\end{equation*}
with $r_y \coloneq d(y,\cdot)$.

\begin{prop}
  \label{Prop_LInftyBounds}
  If $A \in \sA(\fs,B)$, $\Phi \in \Gamma(\bS)$, and $\epsilon > 0$ satisfy \autoref{Eq_UnnormalizedBlownupSeibergWitten},
  then
  \begin{equation*}
    \Abs{\Phi}_{L^\infty(M)}
    + \Abs{\nabla_A\Phi}_{L_\star^{2,1}(M)}
    + \epsilon^{-1}\Abs{\mu(\Phi)}_{L_\star^{2,1}(M)}
    \leq c\Abs{\Phi}_{L^2};
  \end{equation*}
  moreover, for every $x \in M$, $r \in (0,r_0]$,
  \begin{align*}
    \Abs{\Phi}_{L^\infty\paren*{B_{r/2}(x)}}
    + \Abs{\nabla_A\Phi}_{L_\star^{2,1}\paren*{B_{r/2}(x)}}
    + \epsilon^{-1}\Abs{\mu(\Phi)}_{L_\star^{2,1}\paren*{B_{r/2}(x)}}
    \leq
      cm_x^\Phi(r)^{1/2}.
  \end{align*}
\end{prop}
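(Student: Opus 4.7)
The plan is to combine two ingredients: a mean-value inequality for $|\Phi|^2$, and a weighted integration-by-parts with a fundamental-solution-type test function. The starting observation is that, since $\fK$ is a smooth endomorphism depending only on the algebraic and geometric data, one has $|\inner{\fK\Phi}{\Phi}| \leq c|\Phi|^2$, so \autoref{Eq_LaplacianOfAbsPhiSquaredEqualsLowerOrderTerm} yields the pointwise inequality
\[
\Delta|\Phi|^2 + 2|\nabla_A\Phi|^2 + 4\epsilon^{-2}|\mu(\Phi)|^2 \leq 2c|\Phi|^2.
\]
In particular, $u \coloneq |\Phi|^2$ is a nonnegative weak subsolution of $\Delta u \leq 2cu$ on balls of radius at most $r_0$.

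For the $L^\infty$ bound I would apply the classical mean-value inequality for weak subsolutions of uniformly elliptic equations (obtained via Moser iteration) to $u = |\Phi|^2$, which gives
\[
\Abs{\Phi}_{L^\infty(B_{r/2}(x))}^2 \leq c r^{-3} \int_{B_r(x)}|\Phi|^2.
\]
Combined with \autoref{Prop_L2Bounds} this yields $\Abs{\Phi}_{L^\infty(B_{r/2}(x))}^2 \leq c m_x^\Phi(r)$, and covering $M$ by finitely many balls of radius $r_0/2$ produces the global bound $\Abs{\Phi}_{L^\infty(M)} \leq c\Abs{\Phi}_{L^2}$.

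For the weighted $L^2$ estimates, the plan is to fix $y \in M$ and apply \autoref{Cor_IntegrationByPartsFormula} with a test function $f_y$ that serves as a fundamental solution of the Laplacian at $y$. Concretely, in geodesic normal coordinates on $B_{r_0}(y)$, take $f_y$ to be a smooth cutoff of $(4\pi r_y)^{-1}$ supported in $B_{r_0}(y)$, so that $\tfrac12 \Delta f_y = -\tfrac12\delta_y + b_y$ distributionally with $b_y$ uniformly bounded. After truncating the pole below at height $\delta^{-1}$, applying \autoref{Cor_IntegrationByPartsFormula}, and letting $\delta \to 0$, one obtains
\[
\int f_y\paren*{|\nabla_A\Phi|^2 + 2\epsilon^{-2}|\mu(\Phi)|^2} \leq \tfrac12|\Phi(y)|^2 + c\int (1+f_y)|\Phi|^2.
\]
Since $f_y \geq c r_y^{-1}$ near $y$, $\int f_y \leq c$, and the $L^\infty$ bound of the previous paragraph controls both $|\Phi(y)|$ and $\int f_y|\Phi|^2$ by $c\Abs{\Phi}_{L^2}^2$, the global Morrey estimates follow after taking $\sup_y$ and a square root. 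For the local version on $B_{r/2}(x)$ I would repeat the argument on $U = B_r(x)$ with $f_y$ the Dirichlet Green's function of $\Delta$ on $B_r(x)$ at $y \in B_{r/2}(x)$; then the boundary terms collapse to an integral of $|\Phi|^2$ over $\del B_r(x)$ weighted by a factor of size $r^{-2}$, which \autoref{Prop_L2Bounds} controls by $c m_x^\Phi(r)$.

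The main technical hurdle will be the careful regularization of $f_y$ needed to legitimately substitute the singular weight $(4\pi r_y)^{-1}$ into \autoref{Cor_IntegrationByPartsFormula}, together with tracking the boundary contributions on $\del B_r(x)$ in the localized case; both are standard applications of Green's-function techniques on Riemannian $3$--manifolds.
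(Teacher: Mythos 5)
Your proposal is correct and rests on essentially the same mechanism as the paper's proof: the heart of both arguments is testing the identity \autoref{Eq_LaplacianOfAbsPhiSquaredEqualsLowerOrderTerm}, via \autoref{Cor_IntegrationByPartsFormula}, against the Green's function of $B_r(x)$, using $\int_{B_r(x)} G_y \leq cr^2$, $G_y \geq c^{-1}r_y^{-1}$ near $y$, and \autoref{Prop_L2Bounds} (resp.\ the boundary term of size $r^{-2}\int_{\del B_r(x)}\abs{\Phi}^2$) to produce the factor $m_x^\Phi(r)$. The one genuine difference is the treatment of the sup bound: you first run Moser iteration on the subsolution $\abs{\Phi}^2$ and then feed $\Abs{\Phi}_{L^\infty}$ into the Green's-function step, whereas the paper derives the $L^\infty$ and Morrey bounds simultaneously from the same weighted identity, absorbing the term $cr^2\Abs{\chi\Phi}_{L^\infty}^2$ into the left-hand side for $r \leq r_0$ small; your version imports a standard external tool but avoids the absorption, the paper's is self-contained in one identity. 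Two bookkeeping points, neither fatal: with the paper's (positive) Laplacian one has $\tfrac12\Delta f_y = +\tfrac12\delta_y + b_y$ with $b_y = O(r_y^{-1})$ (from the curvature correction to $\Delta r_y^{-1}$) rather than $-\tfrac12\delta_y$ with $b_y$ bounded, so the term $\tfrac12\abs{\Phi(y)}^2$ actually appears with a favorable sign and the error is absorbed into $c\int f_y\abs{\Phi}^2$ — your displayed inequality remains true and suffices; and for the global Morrey norm over all of $M$ one still needs the far-field contribution $\int_{M\setminus B_{r_0/2}(y)} r_y^{-1}(\abs{\nabla_A\Phi}^2 + \epsilon^{-2}\abs{\mu(\Phi)}^2) \leq c\Abs{\Phi}_{L^2}^2$, which follows from \autoref{Cor_IntegrationByPartsFormula} with $f = 1$ and $U = M$ (a step the paper also leaves implicit).
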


\begin{proof}
  Let $\chi \in C_0^\infty(B_r(x),[0,1])$ be a cut-off function satisfying $\chi|_{B_{r/2}(x)} = 1$ and
  \begin{equation*}
    r\abs{\nabla\chi} \leq c \qandq
    r^2\abs{\nabla^2\chi} \leq c.
  \end{equation*}
  Denote by $G$ the Green's kernel for $B_r(x)$ and,
  for $y \in B_r(x)$,
  set $G_y \coloneq G(y,\cdot)$.
  Multiplying \autoref{Eq_LaplacianOfAbsPhiSquaredEqualsLowerOrderTerm} with $\chi^2 G_y$ and integrating by parts yields
  \begin{equation*}
    \frac12\chi(y)^2\abs{\Phi}^2(y)
    + \int_{B_r(x)} \chi^2 G_y\(\abs{\nabla_A\Phi}^2 + 2\epsilon^{-2}\abs{\mu(\Phi)}^2\) \\
    =
      \int_{B_r(x)} \chi^2 G_y \inner{\fR\Phi}{\Phi} + \Theta_y \cdot \abs{\Phi}^2
  \end{equation*}
  with
  \begin{equation*}
    \Theta_y \coloneq \inner{\nabla\chi^2}{\nabla G_y} - \frac12\Delta \chi^2\cdot G_y.
  \end{equation*}
  From
  \begin{equation*}
    \int_{B_r(x)} G_y \leq c r^2 \qandq
    \Abs{\Theta_y}_{L^\infty} \leq cr^{-3}
  \end{equation*}
  it follows that
  \begin{equation*}    
    \Abs{\chi\Phi}_{L^\infty}^2
     + \sup_{y \in B_{r/2}(x)}\int_{B_{r/2}(x)} r_y^{-1}\paren*{\abs{\nabla_A\Phi}^2 + \epsilon^{-2}\abs{\mu(\Phi)}^2}
    \leq
      cr^2 \Abs{\chi \Phi}_{L^\infty}^2
      + cr^{-3}\Abs{\Phi}_{L^2(B_r(x))}^2.
  \end{equation*}
  After rearranging and
  by \autoref{Prop_L2Bounds},
  the asserted inequalities follow.
\end{proof}

\subsection{\texorpdfstring{$W^{2,2}$}{W22} bounds on \texorpdfstring{$\Phi$}{Phi}}

\begin{prop}
  \label{Prop_W22Bounds}
  For every $c_F,c_\Phi,c_\scN > 0$ and $\delta \in (0,\frac12]$,
  there is a constant $c = c(c_F,c_\Phi,c_\scN) > 0$ such that the following holds for every $x \in M$, $r \in (0,r_0]$.
  If $A \in \sA(\fs,B)$, $\Phi \in \Gamma(\bS)$, and $\epsilon > 0$ satisfy \autoref{Eq_UnnormalizedBlownupSeibergWitten},
  \begin{equation*}
    r\int_{B_r(x)} \abs{F_{\Ad(A)}}^2 \leq c_F, \quad
    m_x^\Phi(r) \leq c_\Phi, \qandq
    \scN_x^{A,\Phi,\epsilon}(r) \leq c_\scN,
  \end{equation*}
  then
  \begin{equation*}
    r\int_{B_{r/2}(x)} \abs{\nabla_A^2\Phi}^2 
     + \paren*{\frac{\epsilon}{r}}^{2} \cdot r^3\int_{B_{r/2}(x)} \abs{\nabla_{\Ad(A)}F_{\Ad(A)}}^2
    \leq
      c.
  \end{equation*}
\end{prop}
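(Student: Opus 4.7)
The plan is to prove the two estimates in tandem: the bound on $\nabla_{\Ad(A)} F_{\Ad(A)}$ is extracted directly from the moment--map equation, the bound on $\nabla_A^2 \Phi$ from the differentiated Lichnerowicz--Weitzenböck formula, and the two close via a coupled absorption argument.

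\textbf{Step 1 (bound on $\nabla_{\Ad(A)} F$).} Differentiating $\epsilon^2 F_{\Ad(A)} = \mu(\Phi)$ yields the pointwise identity
\begin{equation*}
  \epsilon^2 \nabla_{\Ad(A)} F_{\Ad(A)} = \rd\mu_\Phi(\nabla_A\Phi),
\end{equation*}
hence $\epsilon^2\abs{\nabla_{\Ad(A)} F_{\Ad(A)}} \leq c\abs{\Phi}\abs{\nabla_A\Phi}$. Squaring and integrating on $B_{r/2}(x)$, applying \autoref{Prop_LInftyBounds} to control $\Abs{\Phi}_{L^\infty(B_{r/2}(x))}^2 \leq c\,m_x^\Phi(r) \leq c c_\Phi$, and using the very definition of the frequency to estimate $\int_{B_{r/2}(x)}\abs{\nabla_A\Phi}^2 \leq 4\pi r\,\scN_x^{A,\Phi,\epsilon}(r)\,m_x^\Phi(r) \leq c r$, reduces the $(\epsilon/r)^2 r^3$--weighted integral of $\abs{\nabla_{\Ad(A)} F}^2$ to a quantity that closes once the cross-term in Step 2 is absorbed.

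\textbf{Step 2 (bound on $\nabla_A^2\Phi$).} From $\slD_A\Phi = 0$ and \autoref{Prop_LichnerowiczWeitzenbock}, $\nabla_A^*\nabla_A\Phi = -\bgamma(F_{\Ad(A)})\Phi - \fK\Phi$. Differentiate and commute using $[\nabla_A^*\nabla_A,\nabla_A]$, which produces terms involving the Riemann tensor and $F_{\Ad(A)}$ acting on $\nabla_A\Phi$ together with a $\bgamma(\nabla_{\Ad(A)} F)\Phi$ term. Substitute into the Bochner identity
\begin{equation*}
  \abs{\nabla_A^2\Phi}^2 = -\tfrac12\Delta\abs{\nabla_A\Phi}^2 - \inner{\nabla_A^*\nabla_A(\nabla_A\Phi)}{\nabla_A\Phi},
\end{equation*}
multiply by $\chi^2$ for a standard cutoff with $\chi \equiv 1$ on $B_{r/2}(x)$, $\supp\chi \subset B_r(x)$, and $r\abs{\nabla\chi}+r^2\abs{\nabla^2\chi}\leq c$, and integrate. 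Two integrations by parts convert $-\int\chi^2\Delta\abs{\nabla_A\Phi}^2/2$ into $cr^{-2}\int_{B_r}\abs{\nabla_A\Phi}^2 \leq c$ by the frequency bound; the $\bgamma(F_{\Ad(A)})\nabla_A\Phi$--term is handled by Cauchy--Schwarz using the hypothesis $r\int_{B_r(x)}\abs{F_{\Ad(A)}}^2 \leq c_F$ together with the $L^\infty$--bound on $\Phi$; and the $\fK$-- and Riemann--terms are harmless lower-order contributions.

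The main obstacle is the $\bgamma(\nabla_{\Ad(A)} F)\Phi$--contribution in Step 2, which must be absorbed against the left-hand side of Step 1. The weight $(\epsilon/r)^2$ in the statement is precisely calibrated for this: the split $\abs{\nabla F}\abs{\Phi}\abs{\nabla_A\Phi} \leq \tfrac{\epsilon^2}{2}\abs{\nabla F}^2 + \tfrac{1}{2\epsilon^2}\abs{\Phi}^2\abs{\nabla_A\Phi}^2$ puts the first piece at the scale matching the weighted $\nabla F$--estimate, while the second piece is controlled via the frequency-bounded Dirichlet energy $\int(\abs{\nabla_A\Phi}^2 + 2\epsilon^{-2}\abs{\mu(\Phi)}^2)$ (since $\abs{\mu(\Phi)}$ dominates $\abs{\Phi}^2$ pointwise up to a constant, giving back $\epsilon^{-2}\abs{\Phi}^4$, which by the $L^\infty$--bound on $\Phi$ controls $\epsilon^{-2}\abs{\Phi}^2\abs{\nabla_A\Phi}^2$ after another Cauchy--Schwarz against $\int\abs{\nabla_A\Phi}^2$). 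A simultaneous absorption then closes both bounds with the constant asserted.
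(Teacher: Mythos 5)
Your scheme does not close, for two related reasons. First, Step 1 does not produce the $\nabla_{\Ad(A)}F_{\Ad(A)}$--estimate at the weight claimed in the statement. Differentiating $\epsilon^2F_{\Ad(A)}=\mu(\Phi)$ gives $\epsilon^2\abs{\nabla_{\Ad(A)}F_{\Ad(A)}}\leq c\abs{\Phi}\abs{\nabla_A\Phi}$, hence only $\epsilon^4\int_{B_{r/2}(x)}\abs{\nabla_{\Ad(A)}F_{\Ad(A)}}^2\leq c\,m_x^\Phi(r)\int\abs{\nabla_A\Phi}^2\leq c r$, which is weaker than the asserted bound $\epsilon^2 r\int\abs{\nabla_{\Ad(A)}F_{\Ad(A)}}^2\leq c$ by a factor $(r/\epsilon)^2$; it is useless when $\epsilon\ll r$, which is exactly the regime of interest. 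Consequently, when in your final absorption you split $\abs{\nabla F}\abs{\Phi}\abs{\nabla_A\Phi}\leq\tfrac{\epsilon^2}{2}\abs{\nabla F}^2+\tfrac{1}{2\epsilon^2}\abs{\Phi}^2\abs{\nabla_A\Phi}^2$, there is nothing on the left-hand side of any of your inequalities against which $\tfrac{\epsilon^2}{2}\abs{\nabla F}^2$ (with weight $r$) can be absorbed: no good-signed term of the form $\epsilon^2\abs{\nabla_{\Ad(A)}F_{\Ad(A)}}^2$ has been produced. This is precisely what \autoref{Prop_DeltaAbsNablaPhi} supplies in the paper: pairing the differentiated Lichnerowicz--Weitzenböck formula with $\nabla_A\Phi$ and using $\inner{\bgamma(\nabla_A\mu(\Phi))\Phi}{\nabla_A\Phi}=\abs{\nabla_A\mu(\Phi)}^2$ (and likewise \autoref{Eq_DAMuPhi} for the $\rd_{\Ad(A)}^*\mu(\Phi)$--term) puts $\epsilon^{-2}\abs{\nabla_{\Ad(A)}\mu(\Phi)}^2=\epsilon^2\abs{\nabla_{\Ad(A)}F_{\Ad(A)}}^2$ on the left with a good sign, so both the Hessian term and the weighted $\nabla F$--term come out of one integration by parts. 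Your approach has no substitute for this mechanism, so the "simultaneous absorption" is circular.

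Second, the remaining piece $\tfrac{1}{2\epsilon^2}\abs{\Phi}^2\abs{\nabla_A\Phi}^2$ is genuinely out of control, and the justification you offer is false: the pointwise inequality goes the wrong way. One always has $\abs{\mu(\Phi)}\leq c\abs{\Phi}^2$, but $\abs{\Phi}^2\leq c\abs{\mu(\Phi)}$ fails wherever $\mu(\Phi)$ is small and $\Phi$ is not --- which is the typical situation here, since the limits of interest satisfy $\mu(\Phi)=0$ with $\abs{\Phi}>0$ away from $Z$. Under the stated hypotheses one only gets $r\epsilon^{-2}\int\abs{\Phi}^2\abs{\nabla_A\Phi}^2\leq c r^2/\epsilon^2$, which blows up as $\epsilon\to 0$. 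In the paper the factor $\epsilon^{-2}$ is only ever cancelled against $\mu(\Phi)=\epsilon^2F_{\Ad(A)}$ itself, leaving the term $\abs{F_{\Ad(A)}}\abs{\nabla_A\Phi}^2$, which is then handled by Cauchy--Schwarz with $r\int_{B_r(x)}\abs{F_{\Ad(A)}}^2\leq c_F$, the Gagliardo--Nirenberg interpolation of $\Abs{\chi\abs{\nabla_A\Phi}}_{L^4}$ against $\Abs{\nabla(\chi\abs{\nabla_A\Phi})}_{L^2}$ (via Kato), and absorption of the resulting $\abs{\nabla_A^2\Phi}^2$--term; note that your Step 2, which proposes to treat this term "by Cauchy--Schwarz using the $L^\infty$--bound on $\Phi$", also skips this interpolation step. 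To repair the argument you should derive and use the identity of \autoref{Prop_DeltaAbsNablaPhi} (or an equivalent), rather than trying to estimate $\nabla_{\Ad(A)}F_{\Ad(A)}$ separately from the differentiated equation.
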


The proof relies on the following consequence of the Lichnerowicz--Weitzenböck formula \autoref{Eq_LichnerowiczWeitzenbock}.

\begin{prop}
  \label{Prop_DeltaAbsNablaPhi}
  If $A \in \sA(\fs,B)$, $\Phi \in \Gamma(\bS)$, and $\epsilon > 0$ satisfy \autoref{Eq_UnnormalizedBlownupSeibergWitten},
  then
  \begin{multline}
    \label{Eq_DeltaAbsNablaPhi}
    \frac12\Delta\abs{\nabla_A\Phi}^2
    + \abs{\nabla_A^2\Phi}^2
    + \epsilon^{-2}\abs{\rd_{\Ad(A)}^*\mu(\Phi)}^2
    + \epsilon^{-2}\abs{\nabla_{\Ad(A)}\mu(\Phi)}^2 \\
    =
      - \epsilon^{-2}\inner{\bracket{\mu(\nabla_A\Phi,\nabla_A\Phi)}}{\mu(\Phi)}
      + 2\epsilon^{-2} \inner{\mu(\Phi)}{\rho^*(\nabla_A\Phi\wedge\nabla_A\Phi^*)}
      + \fr_{\nabla\Phi}
  \end{multline}
  with
  \begin{equation*}
    \abs{\fr_{\nabla\Phi}}
    \leq
      c\paren*{\abs{\nabla_A\Phi}^2 + \abs{\nabla_A\Phi}\abs{\Phi}}.
  \end{equation*}
\end{prop}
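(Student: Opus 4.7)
The natural starting point is the Bochner identity for the rough Laplacian on $\Tstar M \otimes \bS$,
\begin{equation*}
  \tfrac12 \Delta \abs{\nabla_A\Phi}^2 + \abs{\nabla_A^2\Phi}^2 = \inner{\nabla_A^*\nabla_A(\nabla_A\Phi)}{\nabla_A\Phi},
\end{equation*}
and the plan is to expand the right-hand side by splitting $\nabla_A^*\nabla_A(\nabla_A\Phi) = \nabla_A(\nabla_A^*\nabla_A\Phi) + [\nabla_A^*\nabla_A,\nabla_A]\Phi$ and handling each piece separately. At a point, in a normal orthonormal frame $e_1,e_2,e_3$, the commutator works out to
\begin{equation*}
  [\nabla_A^*\nabla_A, \nabla_{A,e_j}]\Phi = \sum_i\Ric(e_j,e_i)\nabla_{A,e_i}\Phi + 2\sum_i F_A(e_i,e_j)\nabla_{A,e_i}\Phi + \sum_i(\nabla_A F_A)(e_i,e_j)\Phi,
\end{equation*}
where $F_A$, the curvature of $A$ on $\bS$, decomposes as $\bgamma(F_{\Ad(A)})$ plus pieces built from the Riemannian curvature of $g$ and from $F_B$.

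For the first piece I would use \autoref{Prop_LichnerowiczWeitzenbock} together with $\slD_A\Phi=0$ to substitute $\nabla_A^*\nabla_A\Phi = -\bgamma(F_{\Ad(A)})\Phi - \fK\Phi$ and then differentiate via Leibniz. Everywhere $F_{\Ad(A)}$ and $\nabla F_{\Ad(A)}$ appear, I would replace them using $\epsilon^2 F_{\Ad(A)} = \mu(\Phi)$, which introduces factors of $\epsilon^{-2}\mu(\Phi)$ and $\epsilon^{-2}\nabla_{\Ad(A)}\mu(\Phi)$. The key algebraic inputs are then the adjoint relation $\inner{\bgamma(\alpha)\Psi_1}{\Psi_2} = \inner{\alpha}{\bgamma^*(\Psi_2\Psi_1^*)}$ together with the Leibniz expansion
\begin{equation*}
  \nabla_{\Ad(A)}\mu(\Phi) = \tfrac12\bgamma^*\bigl(\nabla_A\Phi\cdot\Phi^* + \Phi\cdot(\nabla_A\Phi)^*\bigr).
\end{equation*}
Pairing the $\bgamma(\nabla_{\Ad(A)}\mu(\Phi))\Phi$ contribution with $\nabla_A\Phi$ through these two identities produces the $\epsilon^{-2}\abs{\nabla_{\Ad(A)}\mu(\Phi)}^2$ term on the left-hand side, while pairing the $\bgamma(\mu(\Phi))\nabla_A\Phi$ contribution with $\nabla_A\Phi$ produces both right-hand-side terms: the symmetric $\bgamma$-part yields $2\epsilon^{-2}\inner{\mu(\Phi)}{\rho^*(\nabla_A\Phi\wedge\nabla_A\Phi^*)}$, and the complementary piece yields $-\epsilon^{-2}\inner{[\mu(\nabla_A\Phi,\nabla_A\Phi)]}{\mu(\Phi)}$.

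The remaining $\epsilon^{-2}\abs{\rd_{\Ad(A)}^*\mu(\Phi)}^2$ term should emerge from the $\sum_i(\nabla_A F_A)(e_i,\cdot)\Phi$ piece of the commutator. By the Bianchi identity $\rd_{\Ad(A)}F_{\Ad(A)} = 0$, the combination $\sum_i \nabla_{\Ad(A),e_i} F_{\Ad(A)}(e_i,e_j)$ equals $-(\rd_{\Ad(A)}^* F_{\Ad(A)})(e_j) = -\epsilon^{-2}(\rd_{\Ad(A)}^*\mu(\Phi))(e_j)$ modulo a Ricci contribution, and invoking the pointwise Hodge-type identity $\abs{\nabla\beta}^2 = \abs{\rd\beta}^2 + \abs{\rd^*\beta}^2 + (\text{curvature})$ for $\Ad(\fs)$-valued $2$-forms $\beta = \mu(\Phi)$ cleanly separates the $\abs{\rd_{\Ad(A)}^*\mu(\Phi)}^2$ factor from the $\abs{\nabla_{\Ad(A)}\mu(\Phi)}^2$ factor on the left.

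The main obstacle is precisely this last step of bookkeeping: one has to check that the two sources of $\nabla_{\Ad(A)}\mu(\Phi)$---namely, the differentiation of the Lichnerowicz substitution and the Bianchi-corrected $\nabla F_A$ commutator term---assemble exactly into $\abs{\nabla_{\Ad(A)}\mu(\Phi)}^2 + \abs{\rd_{\Ad(A)}^*\mu(\Phi)}^2$ without leaving unwanted cross-terms, apart from those absorbed into $\fr_{\nabla\Phi}$. All other contributions---the Ricci-commutator, the Riemannian-curvature and $F_B$ parts of $F_A$ on $\bS$, and $\nabla_A(\fK\Phi)$---depend only on the fixed geometric data and are pointwise bounded by $c\bigl(\abs{\nabla_A\Phi}^2 + \abs{\nabla_A\Phi}\cdot\abs{\Phi}\bigr)$, matching the claimed estimate on $\fr_{\nabla\Phi}$.
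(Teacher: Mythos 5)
Your plan follows the same route as the paper (Bochner formula for $\abs{\nabla_A\Phi}^2$, splitting off the commutator $[\nabla_A^*\nabla_A,\nabla_A]\Phi$, substituting the Lichnerowicz--Weitzenböck formula and $\epsilon^2F_{\Ad(A)}=\mu(\Phi)$, then pairing with $\nabla_A\Phi$), but there is a genuine gap at the step producing $\epsilon^{-2}\abs{\rd_{\Ad(A)}^*\mu(\Phi)}^2$. After the Bianchi/divergence step the relevant commutator contribution is $\epsilon^{-2}\rho\paren[\big]{\rd_{\Ad(A)}^*\mu(\Phi)}\Phi$, and pairing with $\nabla_A\Phi$ leaves you with $\epsilon^{-2}\sum_i\inner{\rho\paren[\big]{(\rd_{\Ad(A)}^*\mu(\Phi))(e_i)}\Phi}{\nabla_{A,e_i}\Phi}$. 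Nothing in your argument identifies this with $-\epsilon^{-2}\abs{\rd_{\Ad(A)}^*\mu(\Phi)}^2$; the paper does so via the identity $\rd_{\Ad(A)}^*\mu(\Phi)=-\rho^*(\nabla_A\Phi\,\Phi^*)$, i.e.\ \autoref{Eq_DAMuPhi}, which is a consequence of $\slD_A\Phi=0$ (quoted from \cite[Proposition B.4]{Doan2017a}). This Dirac-equation input is the one non-generic ingredient of the proof and it is missing from your proposal. The ``pointwise Hodge-type identity'' $\abs{\nabla\beta}^2=\abs{\rd\beta}^2+\abs{\rd^*\beta}^2+(\text{curvature})$ that you invoke instead is not a pointwise identity at all (for $\beta=\rd(xy)$ on flat $\R^2$ the left-hand side is $2$ and the right-hand side is $0$; the Weitzenböck formula only yields such a statement after integration, up to a divergence), and it is also not what is needed: in \autoref{Eq_DeltaAbsNablaPhi} the terms $\abs{\nabla_{\Ad(A)}\mu(\Phi)}^2$ and $\abs{\rd_{\Ad(A)}^*\mu(\Phi)}^2$ appear in full and separately, the former arising---exactly as you say---from pairing $-\epsilon^{-2}\bgamma(\nabla_{\Ad(A)}\mu(\Phi))\Phi$ with $\nabla_A\Phi$; no splitting of $\abs{\nabla_{\Ad(A)}\mu(\Phi)}^2$ is involved.

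A smaller bookkeeping inaccuracy: the two right-hand-side terms of \autoref{Eq_DeltaAbsNablaPhi} do not both come from the single Leibniz term $\bgamma(\mu(\Phi))\nabla_A\Phi$. In the paper's computation one of them comes from that term, while $2\epsilon^{-2}\inner{\mu(\Phi)}{\rho^*(\nabla_A\Phi\wedge\nabla_A\Phi^*)}$ comes from the curvature-times-first-derivative part of the commutator, $2\epsilon^{-2}\sum_{i,j}e^i\otimes\rho\paren{\mu(\Phi)(e_i,e_j)}\nabla_{A,e_j}\Phi$ (see \autoref{Prop_[Delta,Nabla]}). With \autoref{Eq_DAMuPhi} supplied and this attribution corrected, your argument closes along the same lines as the paper's.
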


The proof makes use of the following observation.

\begin{prop}
  \label{Prop_[Delta,Nabla]}
  For every $A \in \sA(\fs)$ and $\Phi \in \Gamma(S)$,
  \begin{align*}
    [\nabla_A^*\nabla_A,\nabla_A]\Phi
    &=
      \rho(\rd_{\Ad(A)}^*F_{\Ad(A)})\Phi + 2\sum_{i,j=1}^3 e^i\otimes \rho\paren{F_{\Ad(A)}(e_i,e_j)}\nabla_{A,e_j}\Phi \\
    & + (\rd_A^*\fR)\Phi
      + 2\sum_{i,j=1}^3 e^i\otimes \fR(e_i,e_j)\nabla_{A,e_j}\Phi.    
  \end{align*}
\end{prop}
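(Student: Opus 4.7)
The statement is a standard Weitzenböck-type commutator identity, and the plan is a direct pointwise calculation in a synchronous orthonormal frame, driven by the curvature decomposition of the induced connection on $\bS$.

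First, fix a point $p \in M$ and a local orthonormal frame $\{e_1,e_2,e_3\}$ that is synchronous at $p$: in particular $\nabla^g e_i|_p = 0$, the Christoffel symbols of $g$ vanish at $p$, and $[e_i,e_j]|_p = 0$. In such a frame, at $p$, the rough Laplacian on any bundle-valued tensor reduces to $-\sum_i \nabla_{A,e_i}\nabla_{A,e_i}$. Consequently, evaluating the commutator at $p$ and pairing with $e^k$ reduces the task to computing $-\sum_i [\nabla_{A,e_i}\nabla_{A,e_i},\nabla_{A,e_k}]\Phi$ at $p$.

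Second, identify the curvature of the connection induced by $A$ on $\bS$. By construction of $A$ as a spin connection on $\fs$, the Riemannian part (lifted via $\gamma$) together with $F_B$ combine into $\fR$, while the structure-group part contributes $\rho(F_{\Ad(A)})$. Writing $R^\bS \coloneq \fR + \rho(F_{\Ad(A)})$ for the total curvature on $\bS$, one has $[\nabla_{A,e_i},\nabla_{A,e_k}]\Phi|_p = R^\bS(e_i,e_k)\Phi$, with no torsion term because $[e_i,e_k]|_p = 0$.

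Third, expanding the triple commutator and inserting the above curvature identity twice gives, at $p$,
\begin{align*}
\nabla_{A,e_i}\nabla_{A,e_i}\nabla_{A,e_k}\Phi - \nabla_{A,e_k}\nabla_{A,e_i}\nabla_{A,e_i}\Phi
&= \nabla_{A,e_i}\bigl([\nabla_{A,e_i},\nabla_{A,e_k}]\Phi\bigr) + [\nabla_{A,e_i},\nabla_{A,e_k}]\nabla_{A,e_i}\Phi \\
&= (\nabla_{A,e_i} R^\bS)(e_i,e_k)\Phi + 2\, R^\bS(e_i,e_k)\, \nabla_{A,e_i}\Phi.
\end{align*}
Summing over $i$, recognizing the divergence identity $-\sum_i (\nabla_{A,e_i} R^\bS)(e_i,\cdot) = \rd_A^* R^\bS$, splitting $R^\bS = \fR + \rho(F_{\Ad(A)})$ (so that $\rd_A^* R^\bS$ splits as $\rd_A^*\fR + \rho(\rd_{\Ad(A)}^* F_{\Ad(A)})$), and using antisymmetry $R^\bS(e_i,e_k) = -R^\bS(e_k,e_i)$ to relabel the contracted index, yields exactly the four terms on the right-hand side of the claimed identity after contracting against $e^k$.

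The calculation is essentially routine; the only real obstacle is sign and index bookkeeping, together with the observation that the Ricci-type corrections arising from applying $\nabla_A^*\nabla_A$ on $T^*M\otimes\bS$-valued sections (rather than on $\bS$) are canceled by the frame-derivative contributions from the triple-commutator expansion. Because the claimed identity is tensorial, verifying it at the arbitrary base point $p$ in a synchronous frame suffices.
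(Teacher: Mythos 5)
Your core computation is correct and is essentially the paper's own proof: work at a point in a synchronous frame, commute $\nabla_{A,e_i}$ past $\nabla_{A,e_k}$ twice so as to produce the divergence term $\rd_A^*R^{\bS}$ and the doubled curvature contraction $2\,R^{\bS}(e_k,e_i)\nabla_{A,e_i}\Phi$, and then split the total curvature $R^{\bS}=\fR+\rho(F_{\Ad(A)})$ of $\bS$; the paper performs exactly this computation, writing $F_A$ for $R^{\bS}$ and splitting only in the statement.

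One remark in your last paragraph is off, though harmlessly so: the claimed \emph{cancellation} of the Ricci-type corrections that arise if $\nabla_A^*\nabla_A$ is taken with the full tensor-product connection on $\Tstar M\otimes\bS$. No such cancellation occurs: already on functions the commutator of the connection Laplacian with the exterior derivative produces a genuine $\Ric$ term (Bochner), and the same term survives here. The identity as stated, and as proved in the paper, should be read with $\nabla_A^*\nabla_A$ acting componentwise on the $\bS$-valued sections $\nabla_{A,e_k}\Phi$ --- which is in fact what your triple-commutator expansion computes --- so these corrections never enter. Under the full-connection reading the right-hand side would acquire an extra $\Ric$-contraction of $\nabla_A\Phi$; this is immaterial for the later use, since such a term is bounded by $c\abs{\nabla_A\Phi}^2$ and is absorbed into $\fr_{\nabla\Phi}$ in \autoref{Prop_DeltaAbsNablaPhi}, but it should not be asserted to cancel.
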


\begin{proof}
  This is a consequence of the following computation
  \begin{align*}
    &-\sum_{j=1}^3 \nabla_{A,e_j}\nabla_{A,e_j}\nabla_{A,e_i}\Phi \\    
    &=
      -\sum_{j=1}^3 \nabla_{A,e_j}F_A(e_j,e_i)\Phi
      -\sum_{j=1}^3 \nabla_{A,e_j}\nabla_{A,e_i}\nabla_{A,e_j}\Phi \\
    &=
      -\sum_{j=1}^3 \nabla_{A,e_j}F_A(e_j,e_i)\Phi
      -\sum_{j=1}^3 F_A(e_j,e_i)\nabla_{A,e_j}\Phi
      -\sum_{j=1}^3 \nabla_{A,e_i}\nabla_{A,e_j}\nabla_{A,e_j}\Phi \\
    &=
      (\rd_A^*F_A)(e_i)\Phi
      -2\sum_{j=1}^3 F_A(e_j,e_i)\nabla_{A,e_j}\Phi
      -\sum_{j=1}^3 \nabla_{A,e_i}\nabla_{A,e_j}\nabla_{A,e_j}\Phi.
      \qedhere
  \end{align*}
\end{proof}

\begin{proof}[Proof of \autoref{Prop_DeltaAbsNablaPhi}]
  By \autoref{Prop_LichnerowiczWeitzenbock},
  \begin{equation*}
    \nabla_A^*\nabla_A \nabla_A\Phi
    =
      [\nabla_A^*\nabla_A,\nabla_A]\Phi
      - \epsilon^{-2}\bgamma(\mu(\Phi))\nabla_A\Phi
      - \epsilon^{-2}\bgamma(\nabla_A\mu(\Phi))\Phi
      - \fK\nabla_A\Phi
      - \gamma(\nabla \fR)\Phi.
  \end{equation*}
  By \autoref{Prop_[Delta,Nabla]},
  the first term on the right-hand side can be written as
  \begin{align}
    \label{Eq_CommutatorOfLaplacianAndNablaOnPhi}
    [\nabla_A^*\nabla_A,\nabla_A]\Phi
    &=
      \epsilon^{-2}\rho(\rd_{\Ad(A)}^*\mu(\Phi))\Phi
      + 2\epsilon^{-2}\sum_{i,j=1}^3 e^i\otimes \rho\paren{\mu(\Phi)(e_i,e_j)}\nabla_{A,e_j}\Phi \\
    &\quad
      + (\rd_A^*\fR)\Phi
      + 2\sum_{i,j=1}^3 e^i\otimes \fR(e_i,e_j)\nabla_{A,e_j}\Phi. \notag
  \end{align}
  It was proved in \cite[Proposition B.4]{Doan2017a} that if $\slD_A\Phi=0$,
  then
  \begin{equation}
    \label{Eq_DAMuPhi}
    \rd_{\Ad(A)}^*\mu(\Phi) = - \rho^*(\nabla_A\Phi\Phi^*).
  \end{equation}
  These identities imply the asserted formula upon taking the inner product of \autoref{Eq_CommutatorOfLaplacianAndNablaOnPhi} with $\nabla_A\Phi$ because
  \begin{equation*}
    \inner{\bgamma(\nabla_A\mu(\Phi))\Phi}{\nabla_A\Phi}
    =
    \abs{\nabla_A\mu(\Phi)}^2
  \end{equation*}
  and
  \begin{align*}
    \sum_{i=1}^3 \inner{(\rd_{\Ad(A)}^*\mu(\Phi))(e_i)\Phi}{\nabla_{A,e_i}\Phi}
    &=
      \sum_{i=1}^3 -\inner{\rho\rho^*\paren*{\nabla_{A,e_i}\Phi\Phi^*}\Phi}{\nabla_{A,e_i}\Phi} \\
    &=
      \sum_{i=1}^3 -\inner{\rho^*\paren*{\nabla_{A,e_i}\Phi\Phi^*}}{\rho^*(\nabla_{A,e_i}\Phi\Phi^*)} \\
    &=
      -\abs{\rd_{\Ad(A)}^*\mu(\Phi)}^2.
      \qedhere
  \end{align*}
\end{proof}

\begin{proof}[Proof of \autoref{Prop_W22Bounds}]
  Let $\chi \in C_0^\infty(B_r(x),[0,1])$ be a cut-off function satisfying $\chi|_{B_{r/2}(x)} = 1$ and
  \begin{equation*}
    r\abs{\nabla\chi} \leq c \qandq
    r^2\abs{\nabla^2\chi} \leq c.
  \end{equation*}
  Multiplying \autoref{Eq_DeltaAbsNablaPhi} by $r\chi^2$, integrating by parts, and using $F_{\Ad(A)} = \epsilon^{-2}\mu(\Phi)$, yields
  \begin{align*}
    r\int_{B_r(x)} \chi^2\paren*{\abs{\nabla_A^2\Phi}^2 + \epsilon^2\abs{\nabla_AF_A}^2}
    &\leq
      c(c_\Phi,c_\scN)
      + cr\int_{B_r(x)} \abs{F_A}\cdot\chi^2\abs{\nabla_A\Phi}^2 \\
    &\leq
      c(c_\Phi,c_\scN)
      + \underbrace{c(c_F)\paren*{r\int_{B_r(x)}\chi^4\abs{\nabla_A\Phi}^4}^{1/2}}_{\eqcolon (\star)}.
  \end{align*}
  By the Gagliardo--Nirenberg interpolation inequality and the Cauchy--Schwarz inequality,
  for every $f \in C_0^\infty(B_r(x))$ and $\sigma > 0$,
  \begin{align*}
    \Abs{f}_{L^4}^2
    &\leq
      c \Abs{\nabla f}_{L^2}^{3/2}\Abs{f}_{L^2}^{1/2} \\
    &\leq
       \sigma\Abs{\nabla f}_{L^2}^2 + c(\sigma)\Abs{f}_{L^2}^2.
  \end{align*}
  Therefore,
  by Kato's inequality,
  \begin{equation*}
    (\star)
    \leq
    \frac{r}{2} \int_{B_r(x)} \chi^2\abs{\nabla_A^2\Phi}^2
    + c(c_F,c_\Phi,c_\scN).
  \end{equation*}
  Rearranging proves the asserted inequality.
\end{proof}

\subsection{Oscillation bounds on \texorpdfstring{$\Phi$}{Phi}}

\begin{prop}
  \label{Prop_OscillationBounds}
  For every $c_F,c_\Phi,c_\scN > 0$,
  there is a constant $c = c(c_F,c_\Phi,c_\scN) > 0$ such that the following holds for every $x \in M$ and $r \in (0,r_0]$.
  If $A \in \sA(\fs,B)$, $\Phi \in \Gamma(\bS)$, and $\epsilon > 0$ satisfy \autoref{Eq_UnnormalizedBlownupSeibergWitten},
  \begin{equation*}
    r\int_{B_r(x)} \abs{F_A}^2 \leq c_F, \quad
    m_x^\Phi(r) \leq c_\Phi, \qandq
    \scN_x^{A,\Phi,\epsilon}(r) \leq c_\scN,
  \end{equation*}
  then,
  for every $y,z \in B_{r/2}(x)$,
  \begin{equation*}
    \abs*{\abs{\Phi}(y)-\abs{\Phi}(z)}
    \leq
      c\paren*{\scN_x^{A,\Phi,\epsilon}(r)m_x^\Phi(r)}^{1/8}.
  \end{equation*}
\end{prop}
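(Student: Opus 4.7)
The plan is to bound $\abs{\abs{\Phi}(y)-\abs{\Phi}(z)}$ via the Morrey embedding $W^{1,4}\hookrightarrow C^{0,1/4}$, which holds in dimension three, and then to bound $\Abs{\nabla_A\Phi}_{L^4}$ by Gagliardo--Nirenberg interpolation between the $L^2$ estimate on $\nabla_A\Phi$ furnished by the very definition of the frequency and the $W^{2,2}$ estimate of \autoref{Prop_W22Bounds}. The exponent $1/8$ arises as the product of the quarter gained from Morrey and the quarter weight on the $L^2$-norm in the scale-invariant Gagliardo--Nirenberg inequality $\Abs{f}_{L^4}^4\leq c\Abs{\nabla f}_{L^2}^3\Abs{f}_{L^2}$ for compactly supported functions on $\R^3$.

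Concretely, Kato's inequality yields $\abs{\nabla\abs{\Phi}}\leq\abs{\nabla_A\Phi}$, so the scale-invariant Morrey estimate $\abs{\abs{\Phi}(y)-\abs{\Phi}(z)}\leq c\,r^{1/4}\Abs{\nabla_A\Phi}_{L^4(B_{r/2}(x))}$ reduces the statement to an $L^4$ bound $\Abs{\nabla_A\Phi}_{L^4(B_{r/2}(x))}\leq c\,r^{-1/4}(\scN\,m)^{1/8}$. To obtain this, I would fix a cutoff $\chi$ equal to $1$ on the ball where Morrey is applied and supported in $B_{r/2}(x)$, with $\abs{\nabla\chi}\leq c/r$, and apply the compactly supported Gagliardo--Nirenberg inequality to $g=\chi\abs{\nabla_A\Phi}$. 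Using Kato once more for the section $\nabla_A\Phi$ gives $\abs{\nabla g}\leq\abs{\nabla\chi}\abs{\nabla_A\Phi}+\chi\abs{\nabla_A^2\Phi}$. The two required inputs are the direct $L^2$ bound from the definition of the frequency,
\begin{equation*}
\Abs{\nabla_A\Phi}_{L^2(B_r(x))}^2\leq 4\pi r\,D_x^{A,\Phi,\epsilon}(r)=4\pi r\,\scN_x^{A,\Phi,\epsilon}(r)\,m_x^\Phi(r),
\end{equation*}
together with the $W^{2,2}$ estimate $\Abs{\nabla_A^2\Phi}_{L^2(B_{r/2}(x))}^2\leq c(c_F,c_\Phi,c_\scN)/r$ from \autoref{Prop_W22Bounds}. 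Substituting and using that $\scN m$ is bounded by the hypothesis constants yields $\Abs{\nabla_A\Phi}_{L^4}^4\leq c(c_F,c_\Phi,c_\scN)\,r^{-1}(\scN m)^{1/2}$, and plugging this back into the Morrey bound makes every power of $r$ cancel, leaving the claimed oscillation bound.

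The one mildly technical point is that \autoref{Prop_W22Bounds} delivers control of $\nabla_A^2\Phi$ on $B_{r/2}(x)$ from hypotheses on $B_r(x)$, whereas the Morrey step requires $L^4$ control of $\nabla_A\Phi$ on the same ball $B_{r/2}(x)$ on which the oscillation conclusion is asserted. This is resolved either by applying \autoref{Prop_W22Bounds} at a marginally shrunk inner radius (absorbing the loss into $c(c_F,c_\Phi,c_\scN)$) or by first establishing the conclusion on concentric balls of radius $(1/2-\varepsilon)r$ and covering $B_{r/2}(x)$ by a bounded number of them. Both routes are routine bookkeeping; the substantive content of the estimate is the interpolation argument above.
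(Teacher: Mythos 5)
Your proposal is correct and is essentially the paper's own argument: Morrey's inequality plus Kato reduces the oscillation to an $L^4$ bound on $\abs{\nabla_A\Phi}$, which is then obtained by Gagliardo--Nirenberg interpolation between the frequency-controlled $L^2$ norm $\Abs{\nabla_A\Phi}_{L^2(B_r(x))}^2 \leq 4\pi r\,\scN_x^{A,\Phi,\epsilon}(r)m_x^\Phi(r)$ and the $W^{2,2}$ bound of \autoref{Prop_W22Bounds}, with exactly the same bookkeeping producing the exponent $1/8$. The only (cosmetic) difference is that the paper applies the ball version of Gagliardo--Nirenberg directly on $B_{r/2}(x)$, with the lower-order term $cr^{-1/2}\Abs{\abs{\nabla_A\Phi}}_{L^2}$ absorbed using the boundedness of $\scN_x^{A,\Phi,\epsilon}(r)m_x^\Phi(r)$, which avoids the cutoff and hence the nesting-of-balls point you flag and resolve.
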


\begin{proof}
  By \autoref{Prop_LInftyBounds} and \autoref{Prop_W22Bounds},
  \begin{equation*}
    \Abs{\Phi}_{L^\infty\(B_{r/2}(x)\)}^2
    \leq
      c(c_\Phi)
    \qandq
    r^{1/2}\Abs{\nabla_A^2 \Phi}_{L^2\(B_{r/2}(x)\)}
    \leq
      c(c_F,c_\Phi,c_\scN).
  \end{equation*}
  Therefore,
  by Morrey's inequality and the Gagliardo--Nirenberg interpolation inequality,
  \begin{align*}
    r^{1/4}[\abs{\Phi}]_{C^{0,1/4}\(B_{r/2}(x)\)}
    &\leq
      cr^{1/4}\Abs{\abs{\nabla_A\Phi}}_{L^4\(B_{r/2}(x)\)} \\
    &\leq
      c\paren*{r^{1/2}\Abs{\abs{\nabla_A^2\Phi}}_{L^2\(B_{r/2}(x)\)}}^{3/4}
      \paren*{r^{-1/2}\Abs{\abs{\nabla_A\Phi}}_{L^2\(B_{r/2}(x)\)}}^{1/4} \\
    &\quad
      + cr^{-1/2}\Abs{\abs{\nabla_A\Phi}}_{L^2\(B_{r/2}(x)\)} \\
    &\leq
      c(c_F,c_\scN,c_\Phi)
      \paren*{\scN_x^{A,\Phi,\epsilon}(r)m_x^\Phi(r)}^{1/8}.
  \end{align*}
  This implies the assertion.
\end{proof}

\subsection{\texorpdfstring{$L^\infty$}{Linfty} bounds on \texorpdfstring{$\mu(\Phi)$}{mu(Phi)}}

\begin{prop}
  \label{Prop_LInftyBoundMu}
  For every $c_F,c_\Phi,c_\scN > 0$,
  there is a constant $c = c(c_F,c_\Phi,c_\scN) > 0$ such that the following holds for every $x \in M$, $r \in (0,r_0]$.
  If $A \in \sA(\fs,B)$, $\Phi \in \Gamma(\bS)$, and $\epsilon > 0$ satisfy \autoref{Eq_UnnormalizedBlownupSeibergWitten},
  \begin{equation*}
    r\int_{B_r(x)} \abs{F_{\Ad(A)}}^2 \leq c_F, \quad
    m_x^\Phi(r) \leq c_\Phi, \qandq
    \scN_x^{A,\Phi,\epsilon}(r) \leq c_\scN,
  \end{equation*}
  then
  \begin{equation*}
    \Abs{\mu(\Phi)}_{L^\infty\(B_{r/2}(x)\)}
    \leq
      c
      \paren*{
        \paren*{\frac{\epsilon}{r}}^2
        \scN_x^{A,\Phi,\epsilon}(r)m_x^\Phi(r)
      }^{1/32}.
  \end{equation*}
\end{prop}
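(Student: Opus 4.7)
The plan is to combine an $L^2$-smallness bound on $\mu(\Phi)$ with a Hölder-continuity estimate for it, and then to interpolate between the two. Write $\tau \coloneq (\epsilon/r)^2 \scN_x^{A,\Phi,\epsilon}(r) m_x^\Phi(r)$ for the scale-invariant quantity on the right-hand side of the claim; the target is $\Abs{\mu(\Phi)}_{L^\infty(B_{r/2}(x))} \leq c\tau^{1/32}$. The $L^2$ bound $\Abs{\mu(\Phi)}_{L^2(B_r(x))}^2 \leq c r^3 \tau$ is immediate from the definitions, since
\begin{equation*}
  \int_{B_r(x)} 2\epsilon^{-2}\abs{\mu(\Phi)}^2 \leq 4\pi r D_x^{A,\Phi,\epsilon}(r) = 4\pi r \scN_x^{A,\Phi,\epsilon}(r) m_x^\Phi(r).
\end{equation*}

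Next, I would derive a Hölder estimate of the form
\begin{equation*}
  r^{1/4}[\mu(\Phi)]_{C^{0,1/4}(B_{r/2}(x))} \leq c\paren*{\scN_x^{A,\Phi,\epsilon}(r) m_x^\Phi(r)}^{1/8}
\end{equation*}
in close analogy with \autoref{Prop_OscillationBounds}. Since $\mu(\Phi) = \frac12\bgamma^*(\Phi\Phi^*)$ is quadratic in $\Phi$, one has the pointwise bound $\abs{\nabla\mu(\Phi)} \leq c\abs{\Phi}\abs{\nabla_A\Phi}$; combining $\Abs{\Phi}_{L^\infty(B_{r/2})} \leq cc_\Phi^{1/2}$ from \autoref{Prop_LInftyBounds} with Gagliardo--Nirenberg applied to $\nabla_A\Phi$ (using $\Abs{\nabla_A^2\Phi}_{L^2(B_{r/2})}^2 \leq c/r$ from \autoref{Prop_W22Bounds} together with $\Abs{\nabla_A\Phi}_{L^2(B_r(x))}^2 \leq cr\scN_x^{A,\Phi,\epsilon}(r) m_x^\Phi(r)$ from the definition of $D_x^{A,\Phi,\epsilon}$) yields $\Abs{\nabla\mu(\Phi)}_{L^4(B_{r/2})} \leq cr^{-1/4}\paren*{\scN_x^{A,\Phi,\epsilon}(r) m_x^\Phi(r)}^{1/8}$. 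Morrey's embedding $W^{1,4}\hookrightarrow C^{0,1/4}$ in dimension three then delivers the desired Hölder bound.

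For the final step I would use a standard Hölder--$L^p$ interpolation. Let $M \coloneq \Abs{\mu(\Phi)}_{L^\infty(B_{r/2}(x))}$ be attained at a point $x_0$, and set $K \coloneq [\mu(\Phi)]_{C^{0,1/4}(B_{r/2})}$; the Hölder bound forces $\abs{\mu(\Phi)} \geq M/2$ on $B_\delta(x_0)$ with $\delta \simeq (M/K)^4$, so when this ball is contained in $B_{r/2}(x)$ the $L^2$ estimate gives $c\delta^3 M^2 \leq cr^3\tau$, that is $M^{14} \leq c\tau K^{12} r^3 \leq c\tau\paren*{\scN_x^{A,\Phi,\epsilon}(r) m_x^\Phi(r)}^{3/2}$ after substituting the bound on $K$. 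Since $\scN_x^{A,\Phi,\epsilon}(r) m_x^\Phi(r)$ is bounded by hypothesis, this yields $M \leq c\tau^{1/14}$, which implies the claimed $M \leq c\tau^{1/32}$ bound: the weaker exponent accommodates the regime $\tau \geq 1$, in which one falls back on the trivial bound $M \leq cc_\Phi$. The degenerate case $B_\delta(x_0) \not\subset B_{r/2}(x)$ reduces to $M \leq cr^{1/4}K \leq c\paren*{\scN_x^{A,\Phi,\epsilon}(r) m_x^\Phi(r)}^{1/8}$, which is likewise compatible with the claim.

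The main obstacle is the Hölder estimate: the Gagliardo--Nirenberg interpolation producing the sharp $\paren*{\scN_x^{A,\Phi,\epsilon}(r) m_x^\Phi(r)}^{1/8}$ dependence requires careful tracking of the $L^2$, $L^4$, and $W^{1,2}$ norms of $\nabla_A\Phi$, in direct parallel to the proof of \autoref{Prop_OscillationBounds}. The final Hölder--$L^p$ interpolation is routine, although bookkeeping the various powers of $r$, $\epsilon$, $\scN$, and $m$ is tedious.
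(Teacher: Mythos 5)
Your overall strategy is sound and genuinely different from the paper's: the paper puts the smallness directly into the H\"older seminorm, estimating $r^{1/4}[\abs{\mu(\Phi)}^2]_{C^{0,1/4}(B_{r/2}(x))}$ by Gagliardo--Nirenberg interpolation between $\nabla^2\abs{\mu(\Phi)}^2$ in $L^2$ (bounded via \autoref{Prop_W22Bounds}) and the \emph{small} $L^2$ norm of $\abs{\mu(\Phi)}^2$, and then simply compares with the small mean value $\fint_{B_{r/2}(x)}\abs{\mu(\Phi)}^2 \leq c(\epsilon/r)^2\scN_x^{A,\Phi,\epsilon}(r)m_x^\Phi(r)$; whereas you keep only an $O(1)$ H\"older bound and import all the smallness through the $L^2$ bound via a sup-point interpolation. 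Your main case is correct and even gives the better exponent $1/14$ (one should phrase the H\"older estimate for $\abs{\mu(\Phi)}$ via Kato's inequality, $\abs{\nabla\abs{\mu(\Phi)}}\leq\abs{\nabla_{\Ad(A)}\mu(\Phi)}\leq c\abs{\Phi}\abs{\nabla_A\Phi}$, rather than for the section $\mu(\Phi)$ itself, and containment in $B_r(x)$ rather than $B_{r/2}(x)$ suffices since that is where the $L^2$ bound lives).

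The genuine problem is your degenerate case. Writing $K$ for the $C^{0,1/4}$ seminorm and $M$ for the sup, the condition $\delta=(M/2K)^4>r/2$ gives $M\geq cr^{1/4}K$, the \emph{reverse} of the inequality you state; and even granting your conclusion, the bound $M\leq c\paren*{\scN_x^{A,\Phi,\epsilon}(r)m_x^\Phi(r)}^{1/8}$ is \emph{not} compatible with the claim: it carries no factor of $\epsilon/r$, so when $\epsilon/r\to 0$ with $\scN_x^{A,\Phi,\epsilon}(r)m_x^\Phi(r)$ of order one it does not imply $M\leq c\paren[\big]{(\epsilon/r)^2\scN_x^{A,\Phi,\epsilon}(r)m_x^\Phi(r)}^{1/32}$ --- and producing exactly this $\epsilon/r$--smallness is the entire content of the proposition. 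The fix stays inside your framework: in the degenerate case take the ball $B_{r/2}(x_0)\subset B_r(x)$ instead; since $K(r/2)^{1/4}\leq M/2$ precisely because $\delta>r/2$, one has $\abs{\mu(\Phi)}\geq M/2$ on all of $B_{r/2}(x_0)$, and the $L^2$ bound $\int_{B_r(x)}\abs{\mu(\Phi)}^2\leq c\epsilon^2 r\,\scN_x^{A,\Phi,\epsilon}(r)m_x^\Phi(r)$ then yields $M\leq c\paren[\big]{(\epsilon/r)^2\scN_x^{A,\Phi,\epsilon}(r)m_x^\Phi(r)}^{1/2}$, which is more than enough. With that repair (and the trivial bound in the regime where the right-hand side exceeds a constant), your argument goes through.
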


\begin{proof}
  By Morrey's inequality,
  the Gagliardo--Nirenberg interpolation inequality,
  \autoref{Prop_LInftyBounds}, and
  \autoref{Prop_W22Bounds},
  \begin{align*}
    r^{1/4}[\abs{\mu(\Phi)}^2]_{C^{0,1/4}\(B_{r/2}(x)\)}
    &\leq
      cr^{1/4}\Abs{\nabla \abs{\mu(\Phi)}^2}_{L^4\(B_{r/2}(x)\)} \\
    &\leq
      c\paren*{r^{1/2} \Abs{\nabla^2 \abs{\mu(\Phi)}^2}_{L^2\(B_{r/2}(x)\)}}^{7/8}
      \paren*{r^{-3/2}\Abs{\abs{\mu(\Phi)}^2}_{L^2\(B_{r/2}(x)\)}}^{1/8} \\
    &\quad
      + cr^{-3}\Abs{\abs{\mu(\Phi)}^2}_{L^1\(B_{r/2}(x)\)} \\
    &\leq
      c(c_F,c_\Phi,c_\scN)
      \paren*{r^{-3/2}\Abs{\mu(\Phi)}_{L^2\(B_{r/2}(x)\)}}^{1/8} \\
    &\leq
      c(c_F,c_\Phi,c_\scN)
      \paren*{
        \paren*{\frac{\epsilon}{r}}^2
        \scN_x^{A,\Phi,\epsilon}(r)m_x^\Phi(r)
      }^{1/16}.
  \end{align*}
  Therefore,
  for all $y,z \in B_{r/2}(x)$,
  \begin{equation*}
    \abs*{\abs{\mu(\Phi)}^2(y) - \abs{\mu(\Phi)}^2(z)}
    \leq
    c(c_F,c_\Phi,c_\scN)
    \paren*{
      \paren*{\frac{\epsilon}{r}}^2
      \scN_x^{A,\Phi,\epsilon}(r)m_x^\Phi(r)
    }^{1/16}.
  \end{equation*}  
  This implies
  \begin{equation*}
    \abs{\mu(\Phi)}^2(y)- \fint_{B_{r/2}(x)} \abs{\mu(\Phi)}^2
    \leq
      c(c_F,c_\Phi,c_\scN)
      \paren*{
        \paren*{\frac{\epsilon}{r}}^2
        \scN_x^{A,\Phi,\epsilon}(r)m_x^\Phi(r)
      }^{1/16}.
  \end{equation*}
  Since
  \begin{equation*}
    \fint_{B_{r/2}(x)} \abs{\mu(\Phi)}^2
    \leq 
    c\paren*{\frac{\epsilon}{r}}^2 D_x^{A,\Phi,\epsilon}(r)
   =
      c\paren*{\frac{\epsilon}{r}}^2\scN_x^{A,\Phi,\epsilon}(r)m_x^\Phi(r),
  \end{equation*}
  the assertion follows.
\end{proof}

\subsection{Curvature decay}

\begin{prop}
  \label{Prop_CurvatureDecay}
  Suppose that \autoref{Hyp_GammaFControlsF} holds with $\Lambda \geq 0$.
  For every $c_F>0$,
  there are constants $r_{-1} > 0$ and $\delta_\scN = \delta_\scN(c_F) > 0$ such that the following holds for every $x \in M$ and $r \in (0,r_{-1}]$.
  If $A \in \sA(\fs,B)$, $\Phi \in \Gamma(\bS)$, and $\epsilon > 0$ satisfy \autoref{Eq_UnnormalizedBlownupSeibergWitten},
  \begin{equation*}
    r\int_{B_r(x)} \abs{F_{\Ad(A)}}^2 \leq c_F, \qandq
    \scN_x^{A,\Phi,\epsilon}(r) \leq \delta_\scN,
  \end{equation*}
  then
  \begin{equation*}
    \frac{r}{4}\int_{B_{r/4}(x)} \abs{F_{\Ad(A)}}^2
    \leq
      \Lambda + 1.
  \end{equation*}
\end{prop}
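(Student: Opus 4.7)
The strategy is to apply \autoref{Hyp_GammaFControlsF} at the scale $r/2$ after rescaling $\Phi$ so that $|\Phi| \sim 1$ on $B_{r/2}(x)$, and then control the right-hand side of \autoref{Eq_Hypothesis} via the Lichnerowicz--Weitzenb\"ock formula together with the interior estimates already established in this section.

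First I would dispose of the case $\Phi|_{B_r(x)} \equiv 0$ (in which $F_{\Ad(A)} = \epsilon^{-2}\mu(\Phi) = 0$ and the conclusion is trivial), and then invoke the scale invariance of \autoref{Eq_UnnormalizedBlownupSeibergWitten} under $(\Phi,\epsilon) \mapsto (\lambda\Phi,\lambda\epsilon)$ to rescale so that $m_x^\Phi(r) = 1$; this preserves $F_{\Ad(A)}$, the a priori bound $r\int|F_{\Ad(A)}|^2 \leq c_F$, and the frequency $\scN_x^{A,\Phi,\epsilon}(r)$. I then split on the size of $\epsilon/r$: if $\epsilon \geq c_0 r$ for a universal $c_0$ (to be chosen), then $|\mu(\Phi)| \leq c|\Phi|^2$ together with $\Abs{\Phi}_{L^\infty(B_{r/4}(x))} \leq c$ (from \autoref{Prop_LInftyBounds}) yields
\[
\frac{r}{4}\int_{B_{r/4}(x)} |F_{\Ad(A)}|^2 \leq c\left(\frac{r}{\epsilon}\right)^{4} \leq \frac{c}{c_0^4},
\]
which is at most $\Lambda + 1$ for $c_0$ large.

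In the remaining case $\epsilon < c_0 r$, I would verify the hypotheses of \autoref{Hyp_GammaFControlsF} on $B_{r/2}(x)$. The bound $|\mu(\Phi)| \leq \delta_\mu$ follows from \autoref{Prop_LInftyBoundMu}:
\[
\Abs{\mu(\Phi)}_{L^\infty(B_{r/2}(x))} \leq c(c_F)\bigl((\epsilon/r)^{2}\scN_x^{A,\Phi,\epsilon}(r)\bigr)^{1/32} \leq c(c_F)\, c_0^{1/16}\, \delta_\scN^{1/32},
\]
which is $\leq \delta_\mu$ once $\delta_\scN = \delta_\scN(c_F)$ is small enough. The pinch $\frac{1}{2} \leq |\Phi| \leq 2$ on $B_{r/2}(x)$ takes more care: \autoref{Prop_OscillationBounds} gives $\bigl||\Phi|(y)-|\Phi|(x)\bigr| \leq c(c_F)\delta_\scN^{1/8}$ for $y \in B_{r/2}(x)$, and a fundamental-theorem-of-calculus and Kato's-inequality argument, which compares $|\Phi|(x)$ with the boundary mean $m_x^\Phi(r)=1$ using the smallness of the Dirichlet energy $D_x^{A,\Phi,\epsilon}(r) = \scN_x^{A,\Phi,\epsilon}(r)\cdot m_x^\Phi(r)$, forces $|\Phi|(x) \in [3/4,5/4]$ for $\delta_\scN$ and $r_{-1}$ small. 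A further rescaling by $1/|\Phi|(x)$ then places $|\Phi|$ in the required interval, while only increasing $m_x^\Phi(r)$ by a bounded factor.

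With the premises of \autoref{Hyp_GammaFControlsF} verified at radius $r/2$, one obtains
\[
\frac{r}{2}\int_{B_{r/4}(x)} |F_{\Ad(A)}|^2 \leq \Lambda + c\,\frac{r}{2}\int_{B_{r/2}(x)} |\Gamma_\Phi F_{\Ad(A)}|^2.
\]
Using $\slD_A\Phi = 0$ and \autoref{Prop_LichnerowiczWeitzenbock} I rewrite $\Gamma_\Phi F_{\Ad(A)} = \bgamma(F_{\Ad(A)})\Phi = -\nabla_A^*\nabla_A\Phi - \fK\Phi$, whence $|\Gamma_\Phi F_{\Ad(A)}|^2 \leq 2|\nabla_A^2\Phi|^2 + c|\Phi|^2$. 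Now \autoref{Prop_W22Bounds} bounds $r\int_{B_{r/2}(x)}|\nabla_A^2\Phi|^2$ by a constant of the form $c(c_F, c_\Phi, c_\scN)$, and a careful tracking of its proof shows this constant is of order $c'(c_F)\cdot(\delta_\scN + r^2)$ when $c_\Phi$ is bounded; combined with $r\int|\Phi|^2 \leq cr^4 m_x^\Phi(r)$ from \autoref{Prop_L2Bounds}, this yields $c\,r\int_{B_{r/2}(x)}|\Gamma_\Phi F_{\Ad(A)}|^2 \leq 1$ for $\delta_\scN$ and $r_{-1}$ chosen small enough (depending on $c_F$), and $\frac{r}{4}\int_{B_{r/4}(x)}|F_{\Ad(A)}|^2 \leq \Lambda + 1$ follows.

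The main obstacle I anticipate is the lower bound on $|\Phi|(x)$ in the third paragraph: quantifying it requires a careful argument that rules out concentration of $|\Phi|^2$ away from $x$, exploiting the smallness of the frequency to transport the $L^2$ mass encoded by $m_x^\Phi(r) = 1$ on $\partial B_r(x)$ down to the centre. A secondary, bookkeeping-heavy point is verifying that the constant in \autoref{Prop_W22Bounds} genuinely vanishes with $\delta_\scN$ (for fixed $c_F$); this is not stated explicitly there but should be extractable from the structure of its proof.
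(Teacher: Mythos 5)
Your proposal is correct, and its skeleton is the same as the paper's: rescale so that $m_x^\Phi(r)=1$ (the frequency being scale invariant), verify the pinching hypotheses of \autoref{Hyp_GammaFControlsF} on $B_{r/2}(x)$ via \autoref{Prop_OscillationBounds} and \autoref{Prop_LInftyBoundMu}, split into cases according to $\epsilon/r$, and apply \autoref{Eq_Hypothesis} at radius $r/2$. Where you genuinely diverge is the estimate of $r\int_{B_{r/2}(x)}\abs{\Gamma_\Phi F_{\Ad(A)}}^2$. The paper obtains it from \autoref{Prop_CurvatureComponentDecay}, i.e.\ from the Weitzenböck identity for $\mu(\Phi)$ (\autoref{Prop_DeltaAbsMuPhi}), which gives the bound $c(c_F,c_\scN)\paren{\epsilon/r+\scN_x^{A,\Phi,\epsilon}(r)^{1/8}+r^2}^2$; accordingly its ``easy'' case is $\epsilon/r$ bounded below, disposed of by $r\int_{B_r(x)}\abs{F_{\Ad(A)}}^2\leq(r/\epsilon)^2\scN_x^{A,\Phi,\epsilon}(r)$ together with the smallness of $\delta_\scN$. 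You instead use the pointwise Lichnerowicz--Weitzenböck identity $\Gamma_\Phi F_{\Ad(A)}=-\nabla_A^*\nabla_A\Phi-\fK\Phi$ and then need \autoref{Prop_W22Bounds} in a sharpened form: as stated it only produces some constant $c(c_F,c_\Phi,c_\scN)$, whereas you need smallness. Your claim that this is extractable is right---with $m_x^\Phi(r)=1$ and $\scN_x^{A,\Phi,\epsilon}(r)\leq\delta_\scN$, every term in that proof is of size $c(c_F)\paren{\delta_\scN+r^2+r^3\delta_\scN^{1/2}}$ after the Gagliardo--Nirenberg absorption---but it does have to be re-derived, and the computation is of essentially the same kind as the paper's \autoref{Prop_CurvatureComponentDecay}, so neither route is shorter. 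What yours buys is a bound on $\Gamma_\Phi F_{\Ad(A)}$ that is independent of $\epsilon/r$, so your case split can sit at $\epsilon/r=c_0$ large, with the crude estimate $r\int_{B_{r/4}(x)}\abs{F_{\Ad(A)}}^2\leq c(r/\epsilon)^4\leq c\,c_0^{-4}$ and no use of $\delta_\scN$ in that case; what the paper's route buys is that the smallness is already packaged in a stated proposition.

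On the pinching step you are, if anything, more careful than the paper: the paper cites only \autoref{Prop_OscillationBounds} and \autoref{Prop_LInftyBoundMu}, leaving implicit the anchoring of $\abs{\Phi}$ on $B_{r/2}(x)$ to the normalization $m_x^\Phi(r)=1$. Your fundamental-theorem-of-calculus/Kato argument using the smallness of the Dirichlet energy does exactly this (equivalently, \autoref{Prop_M'} and \autoref{Cor_MAlmostIncreasing} give $m_x^\Phi(r/2)\geq\paren{1-c(\delta_\scN+r^2)}m_x^\Phi(r)$, which combined with the oscillation bound anchors $\abs{\Phi}$ near $1$); once that is in place the additional rescaling by $1/\abs{\Phi}(x)$ is superfluous, though harmless.
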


The proof relies on the following proposition regarding the decay of part of the curvature.

\begin{prop}
  \label{Prop_CurvatureComponentDecay}
  For every $c_F,c_\scN > 0$,
  there is a constant $c = c(c_F,c_\scN) > 0$ such that the following holds for every $x \in M$ and $r \in (0,r_0]$.  
  If $A \in \sA(\fs,B)$, $\Phi \in \Gamma(\bS)$, and $\epsilon > 0$ satisfy \autoref{Eq_UnnormalizedBlownupSeibergWitten},
  \begin{equation*}
    r\int_{B_r(x)} \abs{F_{\Ad(A)}}^2 \leq c_F, \quad
    m_x^\Phi(r) = 1, \qandq
    \scN_x^{A,\Phi,\epsilon}(r) \leq c_\scN,
  \end{equation*}
  then
  \begin{equation*}
    r^{1/2}\Abs{\Gamma_\Phi F_{\Ad(A)}}_{L^2(B_{r/2}(x))}
    \leq
      c
      \paren*{
        \frac{\epsilon}{r}
        + \scN_x^{A,\Phi,\epsilon}(r)^{1/8}
        + r^2
      }.
  \end{equation*}  
\end{prop}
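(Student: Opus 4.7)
The starting point is the Lichnerowicz--Weitzenböck formula (\autoref{Prop_LichnerowiczWeitzenbock}) together with $\slD_A\Phi=0$, which gives the pointwise identity
\begin{equation*}
  \Gamma_\Phi F_{\Ad(A)} = \bgamma(F_{\Ad(A)})\Phi = -\nabla_A^*\nabla_A\Phi - \fK\Phi.
\end{equation*}
A direct application of \autoref{Prop_W22Bounds} together with $\Abs{\Phi}_{L^2(B_r(x))}^2\leq cr^3$ from \autoref{Prop_L2Bounds} yields only the constant bound $r^{1/2}\Abs{\Gamma_\Phi F_{\Ad(A)}}_{L^2(B_{r/2}(x))} \leq c$, which does not exhibit any decay in $\scN_x^{A,\Phi,\epsilon}(r)$. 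To extract the decay the plan is to compute $\int \chi^2 \abs{\Gamma_\Phi F_{\Ad(A)}}^2$ after one integration by parts, using a cutoff $\chi \in C_0^\infty(B_r(x),[0,1])$ with $\chi \equiv 1$ on $B_{r/2}(x)$ and $r\abs{\nabla\chi} + r^2\abs{\nabla^2\chi} \leq c$.

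Substituting the Weitzenböck identity into one of the two factors of $\bgamma(F_{\Ad(A)})\Phi$ and integrating by parts the $\nabla_A^*\nabla_A\Phi$-factor with $\nabla_A(\bgamma(F_{\Ad(A)})\Phi) = \bgamma(\nabla_{\Ad(A)}F_{\Ad(A)})\Phi + \bgamma(F_{\Ad(A)})\nabla_A\Phi$ splits $\int\chi^2\abs{\Gamma_\Phi F_{\Ad(A)}}^2$ into four pieces: \textbf{(A)} a $\nabla F$-term $-\int\chi^2\inner{\nabla_A\Phi}{\bgamma(\nabla_{\Ad(A)}F_{\Ad(A)})\Phi}$, \textbf{(B)} an $\abs{F}\abs{\nabla\Phi}^2$-term $-\int\chi^2\inner{\nabla_A\Phi}{\bgamma(F_{\Ad(A)})\nabla_A\Phi}$, \textbf{(C)} a boundary-type term involving $\nabla\chi$, and \textbf{(D)} the residual $\fK$-pairing $-\int\chi^2\inner{\bgamma(F_{\Ad(A)})\Phi}{\fK\Phi}$.

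Term \textbf{(D)} is absorbed into the LHS via Cauchy--Schwarz and Young's inequality, leaving a remainder $\leq c\Abs{\Phi}_{L^2(B_r(x))}^2 \leq cr^3$, which after multiplication by $r$ produces the $r^4$ (hence $r^2$ after the square root) term. Terms \textbf{(B)} and \textbf{(C)} reduce to integrals of $\abs{F_{\Ad(A)}}\abs{\nabla_A\Phi}^2$ and $r^{-1}\abs{F_{\Ad(A)}}\abs{\Phi}\abs{\nabla_A\Phi}$, which are bounded by combining the hypothesis $r\int\abs{F_{\Ad(A)}}^2\leq c_F$, the frequency identity $\int_{B_r(x)}\abs{\nabla_A\Phi}^2 \leq cr\scN_x^{A,\Phi,\epsilon}(r)$, and the Gagliardo--Nirenberg interpolation with Kato's inequality for $\Abs{\nabla_A\Phi}_{L^4}^2$ against $\Abs{\nabla_A^2\Phi}_{L^2}$ and $\Abs{\nabla_A\Phi}_{L^2}$. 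To extract $\scN$-smallness here, one must track the $\scN$-dependence in the proof of \autoref{Prop_W22Bounds} rather than quoting the constant form of its conclusion: on the RHS of \autoref{Prop_DeltaAbsNablaPhi} multiplied by $r\chi^2$, the $\frac12\Delta\chi^2$-contribution and $\fr_{\nabla\Phi}$-contribution are each $O(\scN)$ by the frequency identity, while the curvature--Hessian term is absorbed into the LHS by Young's inequality, yielding the sharper bound $r\Abs{\nabla_A^2\Phi}_{L^2(B_{r/2}(x))}^2 \leq c\scN$. Term \textbf{(A)} is handled by a further integration by parts that transfers $\nabla_{\Ad(A)}$ off $F_{\Ad(A)}$ onto the polarisation $M_i \coloneq \mu(\Phi,\nabla_{A,e_i}\Phi)$; since $\nabla^*M$ is pointwise bounded by $c(\abs{\nabla_A\Phi}^2 + \abs{\Phi}\abs{\nabla_A^2\Phi})$, term (A) reduces to $\int\chi^2\abs{F_{\Ad(A)}}(\abs{\nabla_A\Phi}^2 + \abs{\Phi}\abs{\nabla_A^2\Phi})$ plus a boundary piece, again bounded using the same ingredients.

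Assembling all four estimates gives $r\int_{B_{r/2}(x)} \abs{\Gamma_\Phi F_{\Ad(A)}}^2 \leq c(\scN_x^{A,\Phi,\epsilon}(r)^{1/2} + r^4)$; taking square roots yields $r^{1/2}\Abs{\Gamma_\Phi F_{\Ad(A)}}_{L^2(B_{r/2}(x))} \leq c(\scN_x^{A,\Phi,\epsilon}(r)^{1/4} + r^2)$, which already implies the stated bound (the $\epsilon/r$ term is extra slack one does not need here). The main obstacle is the refined version of \autoref{Prop_W22Bounds}: the bound as stated in the paper is merely $c(c_F,c_\Phi,c_\scN)$, and without tracking that each right-hand side contribution already carries an $\scN$-factor the $I_1$-type integrals collapse to a constant and no decay is obtained. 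Once that bookkeeping is in place, the two integrations by parts and the elementary inequalities do the rest.
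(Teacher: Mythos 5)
Your route is correct in outline, but it is genuinely different from the paper's. The paper never expands $\int\chi^2\abs{\Gamma_\Phi F_{\Ad(A)}}^2$ by testing the Weitzenböck formula against $\bgamma(F_{\Ad(A)})\Phi$; instead it derives the Bochner-type identity \autoref{Prop_DeltaAbsMuPhi} for $\abs{\mu(\Phi)}^2$, whose key input is the algebraic identity \autoref{Prop_MuMuPhiPhiPhi}, $\mu(\bgamma(\mu(\Phi))\Phi,\Phi)=\tfrac12\Gamma_\Phi^*\Gamma_\Phi\mu(\Phi)$. Multiplying that identity by $r\epsilon^{-2}\chi^2$ turns the good term $\epsilon^{-2}\abs{\Gamma_\Phi\mu(\Phi)}^2$ into exactly $r\chi^2\abs{\Gamma_\Phi F_{\Ad(A)}}^2$ with the favourable sign, so no Hessian of $\Phi$ is ever paired against $F_{\Ad(A)}$: the only nontrivial term is $\abs{F_{\Ad(A)}}\abs{\nabla_A\Phi}^2$, the decay $\scN^{1/8}$ comes from the factor $\paren*{r^{-1/2}\Abs{\nabla_A\Phi}_{L^2}}^{1/4}\leq c\,\scN_x^{A,\Phi,\epsilon}(r)^{1/8}$ in the Gagliardo--Nirenberg step (with \autoref{Prop_W22Bounds} quoted only in its stated constant form), the $\fK$--term gives $r^4$, and the $\Delta\abs{\mu(\Phi)}^2$ cut-off term produces the $(\epsilon/r)^2$ contribution. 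In your scheme the crude bound $\abs{\mu(\nabla_A^*\nabla_A\Phi,\Phi)}\leq c\abs{\Phi}\abs{\nabla_A^2\Phi}$ in term (A) is what forces you to re-prove \autoref{Prop_W22Bounds} with explicit $\scN$--dependence; what you buy is that neither \autoref{Prop_DeltaAbsMuPhi} nor \autoref{Prop_MuMuPhiPhiPhi} is needed and no $\epsilon/r$ loss occurs, while the paper's algebraic identity converts the dangerous term $\inner{F_{\Ad(A)}}{\mu(\nabla_A^*\nabla_A\Phi,\Phi)}$ directly into the good term and yields the $\nabla_{\Ad(A)}F_{\Ad(A)}$ estimate as a by-product.

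Three caveats. First, your refined bound $r\Abs{\nabla_A^2\Phi}_{L^2(B_{r/2}(x))}^2\leq c\scN$ is slightly too strong as stated: the $\abs{\nabla_A\Phi}\abs{\Phi}$ part of $\fr_{\nabla\Phi}$ only gives $cr^3\scN^{1/2}$, and absorbing the $\abs{F_{\Ad(A)}}\abs{\nabla_A\Phi}^2$ term requires taking the Young parameter of size $r^{1/2}$ so that the residue is $cr^{-1}\int\chi^2\abs{\nabla_A\Phi}^2\leq c\scN$ rather than an unqualified constant; the corrected bound $c\paren*{\scN+r^3\scN^{1/2}}$ still yields your conclusion, since $r^{3/2}\scN^{1/4}\leq\scN^{1/4}$. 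Second, do not substitute the Weitzenböck formula once more into $\mu(\nabla_A^*\nabla_A\Phi,\Phi)$: by \autoref{Prop_MuMuPhiPhiPhi} that regenerates $\int\chi^2\abs{\Gamma_\Phi F_{\Ad(A)}}^2$ with coefficient $+1$ and the argument becomes vacuous; the crude Hessian bound is precisely what keeps your computation non-circular. Third, the radius bookkeeping (your cut-off is supported in $B_r(x)$ while \autoref{Prop_W22Bounds}, \autoref{Prop_LInftyBounds} and the interpolation as in \autoref{Prop_OscillationBounds} are stated on $B_{r/2}(x)$) needs the same routine adjustment that the paper's own proof leaves implicit.
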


\begin{proof}[Proof of \autoref{Prop_CurvatureDecay}]
  If $A,\Phi,\epsilon$ satisfy \autoref{Eq_UnnormalizedBlownupSeibergWitten},
  then so do
  \begin{equation*}
    A, \quad
    m_x^{\Phi}(r)^{-1/2}\cdot\Phi, \quad
    m_x^{\Phi}(r)\cdot\epsilon.
  \end{equation*}
  Moreover, $\scN_x^{A,\Phi,\epsilon}$ is invariant under this rescaling.
  Therefore, we can assume that
  \begin{equation*}
    m_x^\Phi(r) = 1.
  \end{equation*}
  For $\delta_\scN \ll 1$,
  it follows from \autoref{Prop_OscillationBounds} and \autoref{Prop_LInftyBoundMu} that, on $B_{r/2}(x)$,
  \begin{equation*}
    \frac12 \leq  \abs{\Phi} \leq 2 \qandq
    \abs{\mu(\Phi)} \leq \delta_\mu
  \end{equation*}
  with $\delta_\mu$ as in \autoref{Hyp_GammaFControlsF}.
    
  If $\epsilon/r \ll 1$,
  then the desired estimate follows from \autoref{Prop_CurvatureComponentDecay} and \autoref{Hyp_GammaFControlsF};
  otherwise,
  it follows from
  \begin{equation*}
    r\int_{B_r(x)} \abs{F_{\Ad(A)}}^2
    \leq
      \paren*{\frac{r}{\epsilon}}^2D_x^{A,\Phi,\epsilon}(r)
    =
      \paren*{\frac{r}{\epsilon}}^2\scN_x^{A,\Phi,\epsilon}(r).
    \qedhere
  \end{equation*}
\end{proof}

The proof of \autoref{Prop_CurvatureComponentDecay} relies on the following proposition,
which is a consequence of the Lichnerowicz--Weitzenböck formula \autoref{Eq_LichnerowiczWeitzenbock}.

\begin{prop}
  \label{Prop_DeltaAbsMuPhi}
  If $A \in \sA(\fs)$, $\Phi \in \Gamma(\bS)$, and $\epsilon > 0$ satisfy \autoref{Eq_UnnormalizedBlownupSeibergWitten},
  then
  \begin{equation}
    \label{Eq_DeltaAbsMuPhi}
    \frac12\Delta \abs{\mu(\Phi)}^2 + \epsilon^{-2}\abs{\Gamma_\Phi\mu(\Phi)}^2 + \abs{\nabla_A\mu(\Phi)}^2
    =
    - 2\inner{\bracket{\mu(\nabla_A\Phi,\nabla_A\Phi)}}{\mu(\Phi)}
    - \inner{\Gamma_\Phi\mu(\Phi)}{\fK\Phi}.
  \end{equation}
\end{prop}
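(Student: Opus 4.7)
The plan is to apply the standard Bochner identity
$$\tfrac12\Delta\abs{\mu(\Phi)}^2 + \abs{\nabla_{\Ad(A)}\mu(\Phi)}^2 = \inner{\nabla_{\Ad(A)}^*\nabla_{\Ad(A)}\mu(\Phi)}{\mu(\Phi)}$$
to the section $\mu(\Phi)$ of $\Wedge^2\Tstar M \otimes \Ad(\fs)$ and then to evaluate the rough Laplacian on the right-hand side by exploiting both the quadratic dependence $\mu(\Phi)=\tfrac12\bgamma^*(\Phi\Phi^*)$ and the blown-up Seiberg--Witten equations. This reduces the proposition to computing $\nabla_{\Ad(A)}^*\nabla_{\Ad(A)}\mu(\Phi)$.

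First I would expand that rough Laplacian via the Leibniz rule. Because $\bgamma^*$ is parallel and $\mu$ is fiberwise quadratic, Leibniz produces a schematic identity of the form
$$\nabla_{\Ad(A)}^*\nabla_{\Ad(A)}\mu(\Phi) = c_1\,\mu(\nabla_A^*\nabla_A\Phi,\Phi) + c_2\,\mu(\nabla_A\Phi,\nabla_A\Phi),$$
where $\mu(\cdot,\cdot)$ is the polarisation of $\mu$, with the second argument in the last term contracted with the metric on $T^*M$. Paired with $\mu(\Phi)$, this residual term produces precisely the $-2\inner{\sqparen*{\mu(\nabla_A\Phi,\nabla_A\Phi)}}{\mu(\Phi)}$ contribution on the right-hand side of \autoref{Eq_DeltaAbsMuPhi}.

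For the linear piece I would substitute the Lichnerowicz--Weitzenböck formula \autoref{Eq_LichnerowiczWeitzenbock}, which combined with $\slD_A\Phi=0$ and $\epsilon^2 F_{\Ad(A)}=\mu(\Phi)$ yields
$$\nabla_A^*\nabla_A\Phi = -\epsilon^{-2}\Gamma_\Phi\mu(\Phi) - \fK\Phi.$$
Using that $\Gamma_\Phi$ equals (up to a factor of $\tfrac12$) the adjoint of $\rd_\Phi\mu$ converts the pairing $\inner{\mu(\nabla_A^*\nabla_A\Phi,\Phi)}{\mu(\Phi)}$ into a multiple of $\inner{\nabla_A^*\nabla_A\Phi}{\Gamma_\Phi\mu(\Phi)}$, and substituting the formula above generates both the $\epsilon^{-2}\abs{\Gamma_\Phi\mu(\Phi)}^2$ term (which I would move to the left) and the $-\inner{\Gamma_\Phi\mu(\Phi)}{\fK\Phi}$ term on the right of \autoref{Eq_DeltaAbsMuPhi}.

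The only real hurdle is bookkeeping: tracking the numerical prefactors arising from $\mu=\tfrac12\bgamma^*(\Phi\Phi^*)$, from the normalisation $\Gamma_\Phi=\tfrac12(\rd_\Phi\mu)^*$, and from the Leibniz expansion, so that the various factors of two cancel to give exactly \autoref{Eq_DeltaAbsMuPhi}. Structurally the argument is simpler than that of \autoref{Prop_DeltaAbsNablaPhi} since no commutator $[\nabla_A^*\nabla_A,\nabla_A]$ needs to be computed.
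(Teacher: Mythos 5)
Your proposal is correct and follows essentially the same route as the paper: Leibniz expansion of $\nabla_{\Ad(A)}^*\nabla_{\Ad(A)}\mu(\Phi)$ into a linear and a quadratic term, substitution of the Lichnerowicz--Weitzenböck formula together with $\slD_A\Phi=0$ and $\epsilon^2F_{\Ad(A)}=\mu(\Phi)$, and the adjointness relation between the polarised $\mu$ and $\Gamma_\Phi$ (which is exactly the content of \autoref{Prop_MuMuPhiPhiPhi} and of the paper's treatment of the $\fK$ term) to produce the $\epsilon^{-2}\abs{\Gamma_\Phi\mu(\Phi)}^2$ and $-\inner{\Gamma_\Phi\mu(\Phi)}{\fK\Phi}$ contributions. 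The only difference is organisational (you convert the pairings before substituting, the paper after), so once the prefactors from $\mu=\tfrac12\bgamma^*(\Phi\Phi^*)$ are tracked the argument coincides with the paper's.
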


The proof makes use of the following identity regarding the symmetric bilinear form associated with the quadratic map $\mu$.

\begin{prop}
  \label{Prop_MuMuPhiPhiPhi}
  For every $\Phi \in \Gamma(\bS)$,
  \begin{equation*}
    \mu(\bgamma(\mu(\Phi))\Phi,\Phi)
    =
    \tfrac12 \Gamma_\Phi^*\Gamma_\Phi\mu(\Phi).
  \end{equation*}
\end{prop}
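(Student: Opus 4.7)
My plan is to prove the identity by polarizing the defining formula
$$\mu(\phi) = \tfrac{1}{2}\bgamma^*(\phi\phi^*)$$
and exploiting the self-adjointness of $\bgamma(\alpha)$ for every $\alpha \in \Im\bH\otimes\fh$. The symmetric bilinear form associated with the quadratic map $\mu$ is
$$\mu(\phi,\psi) \coloneq \tfrac{1}{4}\bgamma^*\paren*{\phi\psi^* + \psi\phi^*},$$
normalized so that $\mu(\phi,\phi) = \mu(\phi)$. Dually, this characterization reads
$$4\inner{\mu(\phi,\psi)}{\alpha} = \inner{\bgamma(\alpha)\psi}{\phi} + \inner{\bgamma(\alpha)\phi}{\psi}.$$

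The key algebraic input is that $\bgamma(\alpha)$ is self-adjoint on $S$. Indeed, writing $\alpha = \sum_k v_k\otimes\xi_k$ with $v_k \in \Im\bH$ and $\xi_k \in \fh$, the map $\gamma(v_k)$ is skew-Hermitian (since $v_k^* = -v_k$), and $\rho(\xi_k)$ is skew-Hermitian (since $\rho$ takes values in $\sp(S)$); moreover $\gamma(v_k)$ and $\rho(\xi_k)$ commute because $\rho$ is quaternionic linear. Hence $\bgamma(v_k\otimes\xi_k) = \gamma(v_k)\rho(\xi_k) = \bgamma(v_k\otimes\xi_k)^*$, and so $\bgamma(\alpha)^* = \bgamma(\alpha)$.

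With this in hand, I would apply the polarization identity with $\phi = \bgamma(\mu(\Phi))\Phi = \Gamma_\Phi\mu(\Phi)$ and $\psi = \Phi$. Using $\bgamma(\alpha)\Phi = \Gamma_\Phi\alpha$, the first term becomes
$$\inner{\bgamma(\alpha)\Phi}{\Gamma_\Phi\mu(\Phi)} = \inner{\Gamma_\Phi\alpha}{\Gamma_\Phi\mu(\Phi)} = \inner{\alpha}{\Gamma_\Phi^*\Gamma_\Phi\mu(\Phi)},$$
while self-adjointness of $\bgamma(\alpha)$ turns the second term into the same quantity:
$$\inner{\bgamma(\alpha)\Gamma_\Phi\mu(\Phi)}{\Phi} = \inner{\Gamma_\Phi\mu(\Phi)}{\bgamma(\alpha)\Phi} = \inner{\Gamma_\Phi^*\Gamma_\Phi\mu(\Phi)}{\alpha}.$$
Adding and dividing by $4$ gives $\mu(\bgamma(\mu(\Phi))\Phi,\Phi) = \tfrac{1}{2}\Gamma_\Phi^*\Gamma_\Phi\mu(\Phi)$.

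The argument is purely pointwise, with no analytic difficulty; the only thing that requires care is the factor of $\tfrac{1}{2}$, which is entirely a matter of fixing the polarization convention so that $\mu(\phi,\phi) = \mu(\phi)$ rather than $2\mu(\phi)$. Given this, the identity is really just the combination of the defining formula for $\mu$ with the fact that $\bgamma(\alpha)$ is self-adjoint.
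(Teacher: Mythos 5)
Your proof is correct and follows essentially the same route as the paper: both arguments are purely pointwise, pairing $\mu(\bgamma(\mu(\Phi))\Phi,\Phi)$ against an arbitrary test element $\zeta$, invoking the defining formula $\mu(\phi)=\tfrac12\bgamma^*(\phi\phi^*)$ via polarization, and using the adjunction $\inner{\bgamma^*(\phi\Phi^*)}{\zeta}=\inner{\phi}{\bgamma(\zeta)\Phi}=\inner{\phi}{\Gamma_\Phi\zeta}$ to land on $\tfrac12\Gamma_\Phi^*\Gamma_\Phi\mu(\Phi)$. The only difference is cosmetic: you spell out the self-adjointness of $\bgamma(\alpha)$, which the paper uses implicitly when it identifies the symmetric bilinear form $\mu(\phi,\Phi)$ paired with $\zeta$ with $\tfrac12\inner{\bgamma^*(\phi\Phi^*)}{\zeta}$.
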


\begin{proof}
  For every $\zeta \in \Omega^2(M,\Ad(\fs))$,
  \begin{align*}
    \inner{\mu(\bgamma(\mu(\Phi))\Phi,\Phi)}{\zeta}
    &=
      \frac12\inner{\bgamma^*(\bgamma(\mu(\Phi))\Phi\Phi^*)}{\zeta} \\
    &=
      \frac12\inner{\bgamma(\mu(\Phi))\Phi)}{\bgamma(\zeta)\Phi} \\
    &=
      \frac12\inner{\Gamma_\Phi\mu(\Phi)}{\Gamma_\Phi\zeta} \\
    &=
      \frac12\inner{\Gamma_\Phi^*\Gamma_\Phi\mu(\Phi)}{\zeta}.
      \qedhere
  \end{align*}
\end{proof}

\begin{proof}[Proof of \autoref{Prop_DeltaAbsMuPhi}]
  By \autoref{Prop_LichnerowiczWeitzenbock} and \autoref{Prop_MuMuPhiPhiPhi},
  \begin{align*}
    \nabla_{\Ad(A)}^*\nabla_{\Ad(A)} \mu(\Phi)
    &=
      2\mu(\nabla_A^*\nabla_A\Phi,\Phi)
      - 2\bracket{\mu(\nabla_A\Phi,\nabla_A\Phi)} \\
    &=
      - 2\epsilon^{-2}\mu(\bgamma(\mu(\Phi))\Phi,\Phi)
      - 2\mu(\fK\Phi,\Phi)
      - 2\bracket{\mu(\nabla_A\Phi,\nabla_A\Phi)} \\
    &=
      - \epsilon^{-2}\Gamma_\Phi^*\Gamma_\Phi\mu(\Phi)
      - 2\mu(\fK\Phi,\Phi)
      - 2\bracket{\mu(\nabla_A\Phi,\nabla_A\Phi)}.
  \end{align*}
  This implies the asserted formula upon taking the inner product with $\mu(\Phi)$ because
  \begin{align*}
    2\inner{\mu(\fK\Phi,\Phi)}{\mu(\Phi)}
    &=
      \frac12\inner{\bgamma^*(\fK\Phi\Phi^*)}{\bgamma^*(\Phi\Phi^*)} \\
    &=
      \frac12\inner{\fK\Phi}{\bgamma(\bgamma^*(\Phi\Phi^*))\Phi} \\
    &=
      \inner{\fK\Phi}{\Gamma_\Phi\mu(\Phi)}.
      \qedhere
  \end{align*}
\end{proof}

\begin{proof}[Proof of \autoref{Prop_CurvatureComponentDecay}]
  Let $\chi \in C_0^\infty(B_r(x),[0,1])$ be a cut-off function supported in $B_r(x)$ and satisfying $\chi|_{B_{r/2}(x)} = 1$ and
  \begin{equation*}
    r\abs{\nabla\chi} \leq c \qandq
    r^2\abs{\nabla^2\chi} \leq c.
  \end{equation*}
  Multiplying \autoref{Eq_DeltaAbsMuPhi} by $r\epsilon^{-2}\chi^2$, integrating by parts, and using $F_{\Ad(A)} = \epsilon^{-2}\mu(\Phi)$, yields
  \begin{align*}
    &r\int_{B_r(x)} \chi^2\abs{\Gamma_\Phi F_{\Ad(A)}}^2
    + \paren*{\frac{\epsilon}{r}}^{2} \cdot r^3\int_{B_r(x)} \chi^2\abs{\nabla_{\Ad(A)} F_{\Ad(A)}}^2 \\
    &\qquad\leq
      c\, \paren*{\frac{\epsilon}{r}}^{2} \cdot r \int_{B_r(x)} \abs{F_{\Ad(A)}}^2 
      + rc\int_{B_r(x)} \chi^2\abs{\nabla_A\Phi}^2\abs{F_{\Ad(A)}}
      + rc\int_{B_r(x)} \chi^2\abs{\Gamma_\Phi F_{\Ad(A)}}\abs{\Phi}.
  \end{align*}
  By the hypotheses and using rearrangement,
  \begin{align*}
    &r\int_{B_r(x)} \chi^2\abs{\Gamma_\Phi F_{\Ad(A)}}^2
      + r\epsilon^2\int_{B_r(x)} \chi^2\abs{\nabla_{\Ad(A)} F_{\Ad(A)}}^2 \\
    &\qquad\leq
      c(c_F) \paren*{\frac{\epsilon}{r}}^{2}
      + c(c_F) \paren*{r\int_{B_r(x)} \chi^2\abs{\nabla_A\Phi}^4}^{1/2}
      + cr^4.
  \end{align*}
  As in the proof of \autoref{Prop_OscillationBounds},
  the second term on the right-hand side can be bounded by
  \begin{equation*}
    c(c_F,c_\scN)\cdot \scN_x^{A,\Phi,\epsilon}(r)^{1/4}.
  \end{equation*}
  Therefore,
  \begin{equation*}
    r^{1/2}\Abs{\Gamma_\Phi F_{\Ad(A)}}_{L^2(B_{r/2}(x))}
    \leq
    c(c_F,c_\scN)
    \paren*{
      \frac{\epsilon}{r}
      + \scN_x^{A,\Phi,\epsilon}(r)^{1/8}
      + r^2
    }.
    \qedhere
  \end{equation*}
\end{proof}

\section{The regularity scale}
\label{Sec_RegularityScale}

Throughout this section,
suppose that \autoref{Hyp_GammaFControlsF} holds with $\Lambda \geq 0$.

\begin{definition}
  \label{Def_RegularityScale}
  For $\delta > 0$ as in the upcoming \autoref{Lem_DecayImpliesInteriorBound},
  set
  \begin{equation*}
    c_F \coloneq \delta^{-1}(\Lambda+1).
  \end{equation*}
  The \defined{regularity scale} of $A \in \sA(\fs,B)$ is the function $r_A \co M \to [0,r_0]$ defined by
  \begin{equation*}
    r_A(x)
    \coloneq
    \sup \set*{ r \in [0,r_0] : r\int_{B_r(x)} \abs{F_A}^2 \leq c_F }.
  \end{equation*}
\end{definition}

The following result is the key to the proof of \autoref{Thm_AbstractCompactness}.

\begin{prop}
  \label{Prop_RegularityScaleBound}
  There are constants $\delta,r_{-1},c>0$ such that the following holds.
  If $A \in \sA(\fs,B)$, $\Phi \in \Gamma(\bS)$, and $\epsilon > 0$ satisfy \autoref{Eq_UnnormalizedBlownupSeibergWitten} and \autoref{Eq_NormalizationOfTheSpinor},
  then
  \begin{equation*}
    r_A(x) \geq \min\set*{c^{-1}\abs{\Phi}(x)^{1/\delta},r_{-1}}.
  \end{equation*}
\end{prop}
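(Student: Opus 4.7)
The plan is to combine iterative application of \autoref{Prop_CurvatureDecay} at dyadic scales descending from $r_A(x)$ with a frequency monotonicity argument and the pointwise bounds from \autoref{Sec_LichnerowiczWeitzenbock}. The function $r \mapsto r\int_{B_r(x)}\abs{F_{\Ad(A)}}^2$ is continuous and non-decreasing in $r$, so if $r_A(x)<r_{-1}$, the saturation identity $r_A(x)\int_{B_{r_A(x)}(x)}\abs{F_{\Ad(A)}}^2 = c_F$ holds by \autoref{Def_RegularityScale}. Rescaling $\Phi \mapsto m_x^\Phi(r_A(x))^{-1/2}\Phi$ and $\epsilon$ correspondingly preserves the blown-up Seiberg--Witten equation, the frequency $\scN_x^{A,\Phi,\epsilon}$, and the curvature integral, while normalising $m_x^\Phi(r_A(x))=1$. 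The claimed bound then reduces to showing $\abs{\Phi}(x)\leq c\, r_A(x)^\delta$ in this normalised setting, since the global bound $m_x^\Phi(r_A(x))\leq c$ (which follows from \autoref{Prop_L2Bounds} and $\Abs{\Phi}_{L^2}=1$) absorbs the rescaling factor.

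Next, at the dyadic scales $r_k=r_A(x)/4^k$, I iterate \autoref{Prop_CurvatureDecay}: since $\Lambda+1=\delta c_F\leq c_F$, each step where $\scN_x^{A,\Phi,\epsilon}(r_k)\leq\delta_\scN$ produces the strict improvement $r_{k+1}\int_{B_{r_{k+1}}(x)}\abs{F_{\Ad(A)}}^2\leq\Lambda+1$ and reinstates the hypothesis of \autoref{Prop_CurvatureDecay} at the next dyadic scale. A frequency monotonicity property (to be developed in the remainder of this section) should ensure that once $\scN_x^{A,\Phi,\epsilon}$ drops below $\delta_\scN$ at some dyadic scale, it remains so at all smaller dyadic scales, so that after a finite delay the iteration either terminates only because $\Phi$ vanishes to infinite order at $x$ or engages indefinitely. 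At the first scale $r_K$ where the iteration engages, the improved curvature bound, the oscillation estimate \autoref{Prop_OscillationBounds}, and the $L^\infty$ bound on $\mu(\Phi)$ from \autoref{Prop_LInftyBoundMu} jointly yield a bound on $\abs{\Phi}(x)$ by a positive power of $r_K$, and an Almgren-type decay estimate for $m_x^\Phi$ under the frequency bound controls $r_K$ from below by a small power of $r_A(x)$.

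The exponent $\delta$ in the final inequality is determined jointly by the contraction factor $\delta c_F=\Lambda+1$ in \autoref{Prop_CurvatureDecay} and the Almgren decay exponent from the frequency monotonicity; this explains the specific choice of $\delta$ in \autoref{Def_RegularityScale}. The main obstacle I anticipate is the frequency-monotonicity input itself: establishing Almgren-type monotonicity for $\scN_x^{A,\Phi,\epsilon}$ in the presence of the curvature term $\bgamma(F_{\Ad(A)})$ in the Lichnerowicz--Weitzenb\"ock formula \autoref{Eq_LichnerowiczWeitzenbock} and of the moment-map nonlinearity, and absorbing the resulting error using \autoref{Hyp_GammaFControlsF}, is the technical core. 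The dyadic curvature iteration described above is then essentially a consequence of these monotonicity estimates, and the delicate point is to keep all of the error terms uniform as the iteration proceeds toward zero scale.
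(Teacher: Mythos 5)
There is a genuine gap, and it sits at the heart of your architecture: a downward dyadic iteration of \autoref{Prop_CurvatureDecay} at the single center $x$, starting from the saturated scale $r_A(x)$, cannot produce a lower bound on $r_A(x)$. Since $r\mapsto r\int_{B_r(x)}\abs{F_{\Ad(A)}}^2$ is non-decreasing, the hypothesis of \autoref{Prop_CurvatureDecay} already holds at every scale below $r_A(x)$, so your iteration only re-derives (improved versions of) bounds you already have at scales $\leq r_A(x)$; it never says anything about scales above $r_A(x)$, which is where the claimed inequality lives. Nor can you run the decay ``upward'': applying \autoref{Prop_CurvatureDecay} at $r=r_A(x)$ gives $\tfrac{r_A(x)}{4}\int_{B_{r_A(x)/4}(x)}\abs{F_{\Ad(A)}}^2\leq\Lambda+1=\delta c_F$, which is perfectly consistent with saturation at scale $r_A(x)$, because the curvature may concentrate off-center in the annulus $B_{r_A(x)}(x)\setminus B_{r_A(x)/4}(x)$. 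Ruling out exactly this off-center concentration is the non-trivial content of the proposition, and it requires two ingredients your proposal never invokes: control of the frequency at \emph{nearby base points} (\autoref{Prop_NDependenceOnX}, together with \autoref{Prop_NAlmostIncreasing}), and the point-selection/covering argument \autoref{Lem_DecayImpliesInteriorBound}, which converts the decay property of $f=\abs{F_{\Ad(A)}}^2/c_F$ on \emph{all} balls $B_s(y)\subset B_{r_\dagger}(x)$ into the interior bound $\tfrac{r_\dagger}{4}\int_{B_{r_\dagger/4}(x)}\abs{F_{\Ad(A)}}^2\leq c_F$, i.e.\ $r_A(x)\geq r_\dagger/4$. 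The paper's proof is then a forward argument: by \autoref{Prop_PhiControlsN} (whose input is the global bound $m_x^\Phi(r_\star)\leq c$ coming from $\Abs{\Phi}_{L^2}=1$), the frequency is at most $\delta_\scN$ at a scale $r_\dagger$ comparable to a fixed power of $\abs{\Phi}(x)$; this smallness is propagated to all interior balls and fed into \autoref{Prop_CurvatureDecay} and \autoref{Lem_DecayImpliesInteriorBound}. No dichotomy or contradiction at the center is needed.

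Two secondary points. First, your mechanism for the bound $\abs{\Phi}(x)\leq c\,r_K^{\text{power}}$ is misattributed: \autoref{Prop_OscillationBounds} and \autoref{Prop_LInftyBoundMu} control the oscillation of $\abs{\Phi}$ and the size of $\mu(\Phi)$ (in the paper they serve only to verify the pointwise hypotheses of \autoref{Hyp_GammaFControlsF} inside the proof of \autoref{Prop_CurvatureDecay}); a bound of that shape would instead come from the Almgren-type growth estimate \autoref{Prop_NControlsGrowthOfM} on a range of scales where the frequency stays \emph{large} --- but in the paper's route this is already packaged into \autoref{Prop_PhiControlsN} and no such dichotomy is required. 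Second, the statement that the Almgren decay ``controls $r_K$ from below by a small power of $r_A(x)$'' is not meaningful as stated (your $r_K$ is by construction at most $r_A(x)$, and a lower bound on it does not combine with the other estimates in the direction you need). The rescaling to $m_x^\Phi(r_A(x))=1$ is harmless but also unnecessary, since the paper works directly with $m_x^\Phi(r)\leq cr_0^{-3}$.
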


The four upcoming subsections
analyze the frequency function.
Throughout,
let $x \in M$ and let  $A \in \sA(\fs,B)$, $\Phi \in \Gamma(\bS)$, and $\epsilon > 0$ be a solution of \autoref{Eq_UnnormalizedBlownupSeibergWitten}.
To simplify notation,
we drop the super-scripts in \autoref{Def_Frequency} and simply write $r_{-1,x}$, $m_x$, $D_x$, and $\scN_x$.

\subsection{Almost monotonicty of \texorpdfstring{$\scN$}{N}}

The following is the key result regarding the frequency function.

\begin{prop}
  \label{Prop_N'}
  For every $r \in (r_{-1,x},r_0]$,
  \begin{equation}
    \label{Eq_N'}
    \begin{split}
      \scN_x'(r)
      &\geq
        \frac{1}{2\pi r m_x(r)}
        \int_{\del B_r(x)}
          \abs{\nabla_{A,\del_r}\Phi - \frac1r \scN_x(r)\Phi}^2
          + \epsilon^{-2}\abs{i(\del_r)\mu(\Phi)}^2 \\
      &\quad
        -cr(1+\scN_x(r)).
    \end{split}
  \end{equation}
\end{prop}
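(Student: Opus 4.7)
I adopt Almgren's frequency-function monotonicity approach. Set $H(r) := 4\pi r^2 m_x(r) = \int_{\partial B_r(x)}|\Phi|^2$ and $E(r) := 4\pi r D_x(r) = \int_{B_r(x)}|\nabla_A\Phi|^2 + 2\epsilon^{-2}|\mu(\Phi)|^2$, so that $\scN_x(r) = rE(r)/H(r)$ and
\begin{equation*}
  \scN_x'(r) = \frac{E(r)}{H(r)} + \frac{rE'(r)}{H(r)} - \frac{rE(r)H'(r)}{H(r)^2}.
\end{equation*}
The plan is to derive an expression for $H'(r)$ via the coarea formula and a Pohozaev-type identity for $rE'(r)$ from the Lichnerowicz--Weitzenböck formula, substitute into the above, and complete the square on the boundary using Cauchy--Schwarz. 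The prefactor $1/(2\pi r m_x(r)) = 2r/H(r)$ in the target inequality matches what the Almgren algebra will produce.

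\textbf{The derivative of $H$.} By the coarea formula, $H'(r) = 2\int_{\partial B_r(x)}\langle \nabla_{A,\partial_r}\Phi,\Phi\rangle + \int_{\partial B_r(x)} H_{x,r}|\Phi|^2$, where on a Riemannian $3$--manifold the mean curvature satisfies $H_{x,r} = 2/r + O(r)$. Applying \autoref{Cor_IntegrationByPartsFormula} with $f \equiv 1$ and $U = B_r(x)$ identifies $\int_{\partial B_r(x)}\langle\nabla_{A,\partial_r}\Phi,\Phi\rangle = E(r) + \int_{B_r(x)}\langle\fK\Phi,\Phi\rangle$, whose bulk error is $O(r^3 m_x(r))$ by \autoref{Prop_L2Bounds}. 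Altogether
\begin{equation*}
  H'(r) = 2E(r) + \frac{2}{r}H(r) + O\bigl(r H(r)\bigr).
\end{equation*}

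\textbf{Pohozaev identity for $E$.} Test the Lichnerowicz--Weitzenböck formula, rewritten via $\slD_A\Phi = 0$ and $F_{\Ad(A)} = \epsilon^{-2}\mu(\Phi)$ as
\begin{equation*}
  \nabla_A^*\nabla_A\Phi + \epsilon^{-2}\bgamma(\mu(\Phi))\Phi + \fK\Phi = 0,
\end{equation*}
against the scaling derivative $\nabla_{A,r\partial_r}\Phi$ in geodesic normal coordinates around $x$, and integrate over $B_r(x)$. Integration by parts of the $\nabla_A^*\nabla_A$ term produces the boundary contribution $-r\int_{\partial B_r(x)}|\nabla_{A,\partial_r}\Phi|^2$, a bulk piece $\tfrac12\int_{B_r} X^j\partial_j|\nabla_A\Phi|^2$ handled by a further integration by parts with $\partial_j X^j = 3$, and a curvature-commutator piece involving $F_A = \epsilon^{-2}\mu(\Phi)$. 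The moment-map identity $\Re\langle \bgamma(\mu(\Phi))\Phi,\nabla_{A,X}\Phi\rangle = \tfrac12\partial_X|\mu(\Phi)|^2$ lets one treat the $\bgamma(\mu(\Phi))\Phi$ term analogously. Collecting terms and recognizing that the only part of $F_A$ surviving the commutator pairing with $\nabla_{A,e_i}\Phi$ in its radial-weighted form is the contraction $i(\partial_r)F_A$, one arrives at
\begin{equation*}
  rE'(r) = E(r) + 2r\int_{\partial B_r(x)}\bigl(|\nabla_{A,\partial_r}\Phi|^2 + \epsilon^{-2}|i(\partial_r)\mu(\Phi)|^2\bigr) + O\bigl(r^2 E(r) + r H(r)\bigr),
\end{equation*}
where the errors absorb the ambient curvature of $M$, the background connection $B$, and the $\fK$--term via \autoref{Prop_L2Bounds}.

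\textbf{Assembly and completion of squares.} Substituting the two expressions into $\scN_x'(r)$ and using $E(r)/H(r) = \scN_x(r)/r$, the leading bulk terms cancel and leave
\begin{equation*}
  \scN_x'(r) \geq \frac{2r}{H(r)}\int_{\partial B_r(x)}\bigl(|\nabla_{A,\partial_r}\Phi|^2 + \epsilon^{-2}|i(\partial_r)\mu(\Phi)|^2\bigr) - \frac{2rE(r)^2}{H(r)^2} - cr(1+\scN_x(r)).
\end{equation*}
Expanding the target square and using the identity $\int_{\partial B_r(x)}\langle\nabla_{A,\partial_r}\Phi,\Phi\rangle = E(r) + O(r^3 m_x(r))$ together with $\scN_x(r)^2 H(r)/r^2 = rE(r)^2/H(r)$ gives
\begin{equation*}
  \int_{\partial B_r(x)}\Bigl|\nabla_{A,\partial_r}\Phi - \frac{\scN_x(r)}{r}\Phi\Bigr|^2 = \int_{\partial B_r(x)}|\nabla_{A,\partial_r}\Phi|^2 - \frac{E(r)^2}{H(r)} + O(\text{error}),
\end{equation*}
converting the main term on the right of the previous display into the stated form and closing the proof. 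The hard part is the Pohozaev identity in the third step, particularly bookkeeping the curvature-commutator terms so that only the radial contraction $|i(\partial_r)\mu(\Phi)|^2$ appears on the boundary, and ensuring that all corrections from the metric, the connection $B$, and $\fK$ are controlled by the slack $cr(1+\scN_x(r))$.
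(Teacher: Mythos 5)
Your architecture is exactly the paper's: Almgren frequency $\scN_x = rE/H$, a derivative formula for $H$, a Pohozaev-type formula for $rE'$, and a completion of the square on the boundary. Your claimed identities for $H'(r)$, for $\int_{\del B_r(x)}\inner{\nabla_{A,\del_r}\Phi}{\Phi}$, and for $rE'(r)$ are equivalent (after unwinding $E = 4\pi r D_x$, $H = 4\pi r^2 m_x$) to the paper's formulas for $m_x'$, $D_x$, and $D_x'$ — the paper merely packages the Pohozaev computation as a divergence identity for the stress-energy tensor $T = T_\Phi + \epsilon^{-2}T_\mu$ contracted with $\rd r_x^2$ — and your assembly and square-completion, with the error absorbed into $cr(1+\scN_x(r))$, are correct.

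The gap is in the step you yourself flag as the hard part: the Pohozaev identity for $rE'(r)$ is asserted, not derived, and the phrase \emph{the only part of $F_A$ surviving the commutator pairing is the contraction $i(\del_r)F_A$} is not a pointwise algebraic fact but the outcome of a cancellation requiring two inputs you never invoke. First, the commutator term $\epsilon^{-2}\sum_i\inner{\rho\paren{\mu(\Phi)(e_i,X)}\Phi}{\nabla_{A,e_i}\Phi}$ must be rewritten, using the consequence $\rd_{\Ad(A)}^*\mu(\Phi) = -\rho^*(\nabla_A\Phi\,\Phi^*)$ of the Dirac equation (the paper's \autoref{Eq_DAMuPhi}), as $\epsilon^{-2}\inner{i_X\mu(\Phi)}{\rd_{\Ad(A)}^*\mu(\Phi)}$. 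Second, since $\mu(\Phi) = \epsilon^2 F_{\Ad(A)}$ is $\rd_{\Ad(A)}$--closed by the Bianchi identity, this pairing is (up to the term $\tfrac12 X\paren{\abs{\mu(\Phi)}^2}$ already produced by your moment-map identity) the divergence of the Maxwell-type tensor $T_\mu(v,w) = \inner{i_v\mu(\Phi)}{i_w\mu(\Phi)} - \inner{v}{w}\abs{\mu(\Phi)}^2$ contracted with $X$; only after integrating \emph{this} by parts do the bulk $\epsilon^{-2}\abs{\mu(\Phi)}^2$ contributions recombine with the coefficient needed to reconstitute $E(r)$, and only then does the boundary term $-r\epsilon^{-2}\int_{\del B_r(x)}\abs{i(\del_r)\mu(\Phi)}^2$ appear with the sign that makes it favorable in $\scN_x'$. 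This is precisely the content of the paper's computation of $\nabla^*T$ in the proof of the $D_x'$ formula, and it is the one piece of the proposition that goes beyond classical frequency monotonicity; without identifying these two identities, your third step states the conclusion rather than proving it.
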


Before embarking on the proof of \autoref{Prop_N'},
let us record the following consequence.

\begin{prop}
  \label{Prop_NAlmostIncreasing}
  For every $r_{-1,x} < s \leq r \leq r_0$,
  \begin{equation*}    
    \scN_x(s)
    \leq
    \paren*{1+cr^2}\scN_x(r) + cr^2.
  \end{equation*}
\end{prop}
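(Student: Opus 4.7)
The plan is to turn \autoref{Prop_N'} into a Gr\"onwall-type differential inequality for $\scN_x$ and integrate. Since the first two terms on the right-hand side of \eqref{Eq_N'} are manifestly non-negative, \autoref{Prop_N'} yields the pointwise lower bound
\begin{equation*}
  \scN_x'(r) \geq -cr\bigl(1+\scN_x(r)\bigr)
  \qquad \text{for every } r \in (r_{-1,x},r_0].
\end{equation*}
This is the only input I will use; the other, sign-definite, terms in \eqref{Eq_N'} will be discarded.

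Next, I would set $h(r) \coloneq \log\bigl(1+\scN_x(r)\bigr)$, which is well-defined and differentiable on $(r_{-1,x},r_0]$ since $1+\scN_x \geq 1 > 0$. The estimate above rewrites as $h'(r) \geq -cr$. Integrating from $s$ to $r$ with $r_{-1,x} < s \leq r \leq r_0$ gives
\begin{equation*}
  h(s) - h(r) \leq \int_s^r c t \,\dt = \tfrac{c}{2}(r^2 - s^2) \leq \tfrac{c}{2}r^2,
\end{equation*}
hence $1+\scN_x(s) \leq e^{cr^2/2}\bigl(1+\scN_x(r)\bigr)$.

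Finally, since $r \leq r_0$ is bounded, one has $e^{cr^2/2} \leq 1 + c'r^2$ for a suitable constant $c' > 0$ (depending only on the geometric data). Expanding
\begin{equation*}
  1+\scN_x(s) \leq (1+c'r^2)\bigl(1+\scN_x(r)\bigr) = 1 + \scN_x(r) + c'r^2 + c'r^2\scN_x(r)
\end{equation*}
and subtracting $1$ from both sides produces the asserted inequality $\scN_x(s) \leq (1+c'r^2)\scN_x(r) + c'r^2$, after renaming $c'$ to $c$.

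There is no real obstacle here once \autoref{Prop_N'} is in hand — the argument is the standard Gr\"onwall reformulation of a one-sided derivative bound. The substantive content of the almost monotonicity lies entirely in \autoref{Prop_N'}; this proposition is its immediate integral corollary.
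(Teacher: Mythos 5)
Your argument is correct and is essentially the paper's own proof: the paper phrases the Gr\"onwall step as monotonicity of $e^{\frac12 cr^2}\paren*{\scN_x(r)+1}$, which is exactly your logarithmic reformulation, and both then use $r \leq r_0$ to replace the exponential factor by $1+cr^2$.
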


\begin{proof}
  By \autoref{Prop_N'},
  \begin{equation*}
    \frac{\rd}{\rd r} e^{\frac12 cr^2}\paren*{\scN_x(r)+1} \geq 0.
  \end{equation*}
  This implies
  \begin{equation*}
    \scN_x(s) \leq e^{\frac12 c(r^2-s^2)}\scN_x(r) + e^{\frac12 c(r^2-s^2)}-1.
    \qedhere
  \end{equation*}
\end{proof}

The proof of of \autoref{Prop_N'} relies on the following three propositions.

\begin{prop}
  \label{Prop_D'}
  For every $r \in (0,r_0]$,
  \begin{equation*}
    D_x'(r)
    =
    \frac{1}{2\pi r}\int_{\del B_r(x)} \abs{\nabla_{A,\del_r}\Phi}^2 + \epsilon^{-2}\abs{i_{\del_r}\mu(\Phi)}^2
    + \fr_{D'}
  \end{equation*}
  with
  \begin{equation*}
    \abs{\fr_{D'}} \leq cr\paren*{D_x(r) + m_x(r)}.
  \end{equation*}
\end{prop}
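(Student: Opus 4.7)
The plan is to compute $D_x'(r)$ by combining two identities. First, by the coarea formula applied to $D_x(r) = (4\pi r)^{-1}\int_{B_r(x)} e$ with $e \coloneq \abs{\nabla_A\Phi}^2 + 2\epsilon^{-2}\abs{\mu(\Phi)}^2$,
$$
D_x'(r) = -\frac{D_x(r)}{r} + \frac{1}{4\pi r}\int_{\partial B_r(x)} e.
$$
Decompose the boundary integrand into its radial and tangential components: $\abs{\nabla_A\Phi}^2 = \abs{\nabla_{A,\del_r}\Phi}^2 + \abs{\nabla_A\Phi}_\tau^2$ and $\abs{\mu(\Phi)}^2 = \abs{i_{\del_r}\mu(\Phi)}^2 + \abs{\mu(\Phi)}_\tau^2$. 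The task reduces to converting the volume term $-D_x(r)/r$ together with the tangential boundary integrals into the claimed radial-boundary expression modulo an error of size $cr(D_x(r) + m_x(r))$.

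Second, derive a Rellich--Pohozaev identity by pairing the combined Lichnerowicz--Weitzenböck--Seiberg--Witten identity
$$
\nabla_A^*\nabla_A\Phi + \epsilon^{-2}\bgamma(\mu(\Phi))\Phi + \fK\Phi = 0
$$
(which follows from \autoref{Prop_LichnerowiczWeitzenbock} together with $\slashed D_A\Phi = 0$ and $\epsilon^2 F_{\Ad(A)} = \mu(\Phi)$) with $\nabla_{A,X}\Phi$, where $X$ is the radial vector field $r\del_r$ in geodesic normal coordinates centered at $x$, and integrating over $B_r(x)$. Integration by parts on the first term produces the Euclidean Pohozaev volume--boundary decomposition involving $\int_{B_r}\abs{\nabla_A\Phi}^2$, $\int_{\partial B_r}\abs{\nabla_A\Phi}^2$ and $\int_{\partial B_r}\abs{\nabla_{A,\del_r}\Phi}^2$. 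The second term, via the moment-map polarization $\nabla_{\Ad(A),X}\mu(\Phi) = 2\mu(\nabla_{A,X}\Phi,\Phi)$ (from quadraticity of $\mu$), becomes the total derivative $\tfrac{\epsilon^{-2}}{2}X\cdot\abs{\mu(\Phi)}^2$, which integrates by parts to $\tfrac{r\epsilon^{-2}}{2}\int_{\partial B_r}\abs{\mu(\Phi)}^2 - \tfrac{3\epsilon^{-2}}{2}\int_{B_r}\abs{\mu(\Phi)}^2$. The $\fK$-term is absorbed into the error.

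The commutator $\lie{\nabla_{A,i}}{\nabla_{A,j}}\Phi = (F_{\Ad(A)}(e_i,e_j) + F_B(e_i,e_j))\Phi$ produced by the integration by parts requires care. Using \eqref{Eq_DAMuPhi}, its $F_{\Ad(A)} = \epsilon^{-2}\mu(\Phi)$ contribution assumes the form $\epsilon^{-2}\int_{B_r}\inner{i_X\mu(\Phi)}{\rd_{\Ad(A)}^*\mu(\Phi)}$; a further integration by parts turns this into a boundary term together with a volume term that exactly absorbs the discrepancy between the volume coefficients produced by the Pohozaev identity and those in $4\pi r D_x(r)$. The remaining errors come from $X = r\del_r$ being only an approximate conformal Killing field in normal coordinates ($\nabla X = g + O(r^2)$ and $\div X = 3 + O(r^2)$), from the bounded $F_B$ and $\fK$ zero-order terms, and from the curvature correction $H_{x,r} - 2/r$ to the mean curvature of $\partial B_r(x)$ when differentiating spherical integrals. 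Each contribution is controlled by Cauchy--Schwarz together with \autoref{Prop_L2Bounds} (which yields $\int_{B_r}\abs{\Phi}^2 \leq cr^3 m_x(r)$) and the elementary inequality $\int_{B_r}\abs{\nabla_A\Phi}^2 \leq 4\pi r D_x(r)$, giving a total error of size $cr(D_x(r) + m_x(r))$.

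The hard part will be the coefficient bookkeeping. The direct Pohozaev pairing produces volume pieces $\tfrac12\int\abs{\nabla_A\Phi}^2 + \tfrac{3\epsilon^{-2}}{2}\int\abs{\mu(\Phi)}^2$ whose weights do not match the weights $1$ and $2\epsilon^{-2}$ appearing in $4\pi r D_x(r)$; arriving at the correct boundary coefficient $\epsilon^{-2}$ (rather than $2\epsilon^{-2}$) on $\abs{i_{\del_r}\mu(\Phi)}^2$ requires the $F$-commutator contribution to enter nontrivially via \eqref{Eq_DAMuPhi} and cancel the spurious $\epsilon^{-2}\int\abs{\mu(\Phi)}^2$ term. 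Verifying this cancellation — so that, after using the decomposition $\abs{\mu(\Phi)}^2 = \abs{\mu(\Phi)}_\tau^2 + \abs{i_{\del_r}\mu(\Phi)}^2$ on $\partial B_r$ and similarly for $\abs{\nabla_A\Phi}^2$, the tangential boundary integrals and the volume integrals assemble to yield exactly the claimed formula — is the crux of the proof.
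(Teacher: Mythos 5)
Your plan is essentially the paper's argument, unpacked: pairing the second--order equation with $\nabla_{A,X}\Phi$ for the radial field $X$ and integrating over $B_r(x)$ is the same computation as the paper's pairing of $\nabla^*T$ with $\rd r_x^2$, where $T = T_\Phi + \epsilon^{-2}T_\mu$ is the Taubes-type stress--energy tensor, and the ingredients coincide (\eqref{Eq_DAMuPhi}, the equation $\epsilon^2F_{\Ad(A)} = \mu(\Phi)$, the radial multiplier, Cauchy--Schwarz plus \autoref{Prop_L2Bounds} for the error term). The only real difference is organizational: the paper carries out the cancellation you flag as the crux once and for all at the pointwise level, by showing $\abs{\nabla^*T}\leq c\abs{\fR}\abs{\Phi}\abs{\nabla_A\Phi}$, and only afterwards integrates against the radial field, so the coefficient bookkeeping is automatic ($-2\tr T$ is exactly the energy density, and $T(\del_r,\del_r)$ produces exactly the radial boundary terms). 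Your bookkeeping does close, but make explicit the one ingredient you left implicit: in the further integration by parts of $\epsilon^{-2}\int_{B_r(x)}\inner{i_X\mu(\Phi)}{\rd_{\Ad(A)}^*\mu(\Phi)}$ you must invoke the Bianchi identity $\rd_{\Ad(A)}\mu(\Phi) = \epsilon^2\rd_{\Ad(A)}F_{\Ad(A)} = 0$, so that by Cartan's formula $\rd_{\Ad(A)}\paren*{i_X\mu(\Phi)} = \nabla_{\Ad(A),X}\mu(\Phi) + 2\mu(\Phi)$ up to $O(r^2)$ metric errors; this produces the boundary term $r\int_{\del B_r(x)}\abs{i_{\del_r}\mu(\Phi)}^2$ with the correct coefficient $\epsilon^{-2}$, together with a volume contribution $\epsilon^{-2}\paren*{\tfrac12\nabla_X\abs{\mu(\Phi)}^2 + 2\abs{\mu(\Phi)}^2}$ which, combined with the $-\tfrac32\epsilon^{-2}\int_{B_r(x)}\abs{\mu(\Phi)}^2$ coming from your moment-map term, leaves exactly $-\epsilon^{-2}\int_{B_r(x)}\abs{\mu(\Phi)}^2$; hence the volume terms assemble to $-\tfrac12\int_{B_r(x)}\paren*{\abs{\nabla_A\Phi}^2 + 2\epsilon^{-2}\abs{\mu(\Phi)}^2}$ and your coarea formula gives the stated identity with the stated error. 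One minor point: no mean-curvature correction enters $D_x'(r)$, since the coarea differentiation of $\int_{B_r(x)}$ is exact; that correction only appears for $m_x'(r)$ in \autoref{Prop_M'}.
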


\begin{proof}
  Following \citet[Proof of Lemma 5.2]{Taubes2012},
  define the tensor field $T \in \Gamma(S^2\Tstar M)$ by
  \begin{equation*}
    T = T_\Phi + \epsilon^{-2}T_\mu
  \end{equation*}
  with
  \begin{align*}
    T_\Phi(v,w)
    &\coloneq
      \inner{\nabla_{A,v}\Phi}{\nabla_{A,w}\Phi}
      - \frac12\inner{v}{w}\abs{\nabla_A\Phi}^2
      \qand \\
    T_\mu(v,w)
    &\coloneq
      \inner{i_v\mu(\Phi)}{i_w\mu(\Phi)}
      - \inner{v}{w}\abs{\mu(\Phi)}^2.
  \end{align*}  
  By a straight-forward computation,
  \begin{equation}
    \label{Eq_TrT}
    -2\tr T
    =
    \abs{\nabla_A\Phi}^2 + 2\epsilon^{-2}\abs{\mu(\Phi)}^2.
  \end{equation}
  A further computation, which we postpone for a moment, shows that
  \begin{equation}
    \label{Eq_DivT}
    \abs{\nabla^*T} \leq c\abs{\fR}\abs{\Phi}\abs{\nabla_A\Phi}.
  \end{equation}
  
  By \autoref{Eq_TrT},
  the identity
  \begin{equation*}
    \int_{B_r(x)} \inner{\nabla^*T}{\rd r_x^2}
    =
    \int_{B_r(x)} \inner{T}{\Hess(r_x^2)}
    - 2r\int_{\del B_r(x)} T(\del_r,\del_r)
  \end{equation*}
  can be rewritten as
  \begin{align*}
    \int_{B_r(x)} 2r_x \nabla^*T(\del_r) 
    &=
      - \int_{B_r(x)} \abs{\nabla_A\Phi}^2 + 2\epsilon^{-2}\abs{\mu(\Phi)}^2
      + \int_{B_r(x)} \inner{T}{\fr_\rII} \\
    &\quad
      -2r\int_{\del B_r(x)} \abs{\nabla_{A,\del_r}\Phi}^2 + \epsilon^{-2}\abs{i_{\del_r}\mu(\Phi)}^2
      +r\int_{\del B_r(x)} \abs{\nabla_A\Phi}^2 + 2\epsilon^{-2}\abs{\mu(\Phi)}^2
  \end{align*}
  with
  \begin{equation*}
    \fr_\rII \coloneq \Hess(r_x^2) - 2g.    
  \end{equation*}
  Since
  \begin{equation*}
    D_x'(r)
    =
    - \frac{1}{4\pi r^2}
    \int_{B_r(x)} \abs{\nabla_A\Phi}^2 + 2\epsilon^{-2}\abs{\mu(\Phi)}^2
    + \frac{1}{4\pi r}
    \int_{\del B_r(x)} \abs{\nabla_A\Phi}^2 + 2\epsilon^{-2}\abs{\mu(\Phi)}^2,
  \end{equation*}
  the above can be rewritten as
  \begin{align*}
    D_x'(r)
    =
    \frac{1}{2\pi r}\int_{\del B_r(x)} \abs{\nabla_{A,\del_r}\Phi}^2 + \epsilon^{-2}\abs{i_{\del_r}\mu(\Phi)}^2
    + \frac{1}{4\pi r^2} \int_{B_r(x)} 2r_x \nabla^*T(\del_r) - \inner{T}{\fr_\rII}.
  \end{align*}  
  The inequality \autoref{Eq_DivT} thus implies the assertion.

  It remains to prove \autoref{Eq_DivT}.
  Let $y \in M$ be an arbitrary point of $M$ and let $e_1,e_2,e_3$ be a local orthonormal frame such that $(\nabla_{e_i}e_j)(y) = 0$.
  All of the following computations take place at the point $y$.  
  By the Lichnerowicz--Weitzenböck formula \autoref{Eq_LichnerowiczWeitzenbock},
  \begin{align*}
    (\nabla^* T_\Phi)(e_i)
    &=
      -\sum_{j=1}^3
        \inner{\nabla_{A,e_j}\nabla_{A,e_j}\Phi}{\nabla_{A,e_i}\Phi}
      + \inner{\nabla_{A,e_j}\nabla_{A,e_i}\Phi}{\nabla_{A,e_j}\Phi}
      - \inner{\nabla_{A,e_i}\nabla_{A,e_j}\Phi}{\nabla_{A,e_j}\Phi} \\
    &=
      \inner{\nabla_A^*\nabla_A\Phi}{\nabla_{A,e_i}\Phi}
      +\sum_{j=1}^3 \inner{F_A(e_i,e_j)\Phi}{\nabla_{A,e_j}\Phi} \\
    &=
      -\epsilon^{-2}\inner{\bgamma(\mu(\Phi))\Phi}{\nabla_{A,e_i}\Phi}
      +\epsilon^{-2}\sum_{j=1}^3 \inner{\rho(\mu(\Phi)(e_i,e_j))\Phi}{\nabla_{A,e_j}\Phi}
      + \fr_T(e_i)
  \end{align*}
  with
  \begin{equation*}
    \fr_T(v)
    \coloneq
    -\inner{\fK\Phi}{\nabla_{A,v}\Phi}
    +\sum_{i=1}^3 \inner{\fR(v,e_i)\Phi}{\nabla_{A,e_i}\Phi}.
  \end{equation*}
  The first two terms on the right-hand side of the above identity can be rewritten as follows.
  By definition of $\mu(\Phi)$,
  \begin{align*}
    \inner{\bgamma(\mu(\Phi))\Phi}{\nabla_{A,e_i}\Phi}
    &=
      \inner{\mu(\Phi)}{\nabla_{\Ad(A),e_i}\mu(\Phi)} \\
    &=
      \frac12\nabla_{e_i}\abs{\mu(\Phi)}^2.
  \end{align*}
  Furthermore,
  the identity \autoref{Eq_DAMuPhi} implies that
  \begin{align*}
    \inner{\rho(\mu(\Phi)(e_i,e_j))\Phi}{\nabla_{A,e_j}\Phi}
    &=
      \inner{\mu(\Phi)(e_i,e_j)}{\rho^*((\nabla_{A,e_j}\Phi)\Phi^*)} \\
    &=
      -\inner{\mu(\Phi)(e_i,e_j)}{(\rd_{\Ad(A)}^*\mu(\Phi))(e_j)}.
  \end{align*}
  Therefore,
  \begin{equation}
    \label{Eq_DivTPhi}
    (\nabla^*T_\Phi)(v)
    =
    - \tfrac12\epsilon^{-2}\nabla_v\abs{\mu(\Phi)}^2
    - \epsilon^{-2}\inner{\rd_{\Ad(A)}^*\mu(\Phi)}{i(v)\mu(\Phi)}
    + \fr_T(v).
  \end{equation}
  
  The term $\fr_T$ satisfies the asserted estimate.
  Thus, it remains to show that the first two term on the right-hand side of \autoref{Eq_DivTPhi} are equal to $-\epsilon^{-2}\nabla^*T_\mu$.
  A brief computation shows that $\rd_A\mu(\Phi) = 0$ implies
  \begin{equation*}
    \sum_{j=1}^3 \inner{\nabla_{\Ad(A),e_j}i(e_i)\mu(\Phi)}{i(e_j)\mu(\Phi)}
    =
    \frac12\nabla_{e_i}\abs{\mu(\Phi)}^2.
  \end{equation*}
  Therefore,
  \begin{align*}
    (\nabla^*T_\mu)(e_i)
    &=
      \nabla_{e_i}\abs{\mu(\Phi)}^2
      -\sum_{j=1}^3 \inner{\nabla_{\Ad(A),e_j}i(e_j)\mu(\Phi)}{i(e_i)\mu(\Phi)} + \inner{\nabla_{\Ad(A),e_j}i(e_i)\mu(\Phi)}{i(e_j)\mu(\Phi)} \\
    &=
      \frac12\nabla_{e_i}\abs{\mu(\Phi)}^2
      + \inner{\rd_{\Ad(A)}^*\mu(\Phi)}{i(e_i)\mu(\Phi)}.
  \end{align*}
  This finishes the proof.
\end{proof}

\begin{prop}
  \label{Prop_D}
  For every $r \in (0,r_0]$,
  \begin{equation*}
    D_x(r)
    =
    \frac{1}{4\pi r}\int_{\del B_r(x)} \inner{\nabla_{A,\del_r}\Phi}{\Phi}
    + \fr_D
  \end{equation*}
  with
  \begin{equation*}
    \abs{\fr_D} \leq cr^2m_x(r).
  \end{equation*}
\end{prop}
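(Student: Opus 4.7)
The plan is to obtain the identity for $D_x(r)$ directly from the pointwise formula \eqref{Eq_LaplacianOfAbsPhiSquaredEqualsLowerOrderTerm}, using the divergence theorem to convert the interior energy integral into a boundary term plus a curvature remainder, and then to bound the remainder using the $L^2$ estimate from \autoref{Prop_L2Bounds}.

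Concretely, I would apply \autoref{Cor_IntegrationByPartsFormula} with $f \equiv 1$ and $U = B_r(x)$. Since $\Delta f = 0$ and $\partial_\nu f = 0$, this gives
\begin{equation*}
    \int_{B_r(x)} \abs{\nabla_A \Phi}^2 + 2\epsilon^{-2}\abs{\mu(\Phi)}^2
    =
    -\int_{B_r(x)} \inner{\fK \Phi}{\Phi}
    + \frac12 \int_{\del B_r(x)} \del_\nu \abs{\Phi}^2.
\end{equation*}
Writing $\del_\nu \abs{\Phi}^2 = 2\inner{\nabla_{A,\del_r}\Phi}{\Phi}$ on $\del B_r(x)$ and dividing by $4\pi r$ yields
\begin{equation*}
    D_x(r)
    =
    \frac{1}{4\pi r}\int_{\del B_r(x)} \inner{\nabla_{A,\del_r}\Phi}{\Phi}
    + \fr_D
    \qwithq
    \fr_D \coloneq -\frac{1}{4\pi r}\int_{B_r(x)} \inner{\fK \Phi}{\Phi}.
\end{equation*}

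It remains to check the bound on $\fr_D$. Since $\fK$ is a universal bundle endomorphism built from the Riemann tensor of $(M,g)$ and $F_B$, we have $\Abs{\fK}_{L^\infty} \leq c$. Hence
\begin{equation*}
    \abs{\fr_D}
    \leq
    \frac{c}{r}\int_{B_r(x)} \abs{\Phi}^2,
\end{equation*}
and \autoref{Prop_L2Bounds} gives $\int_{B_r(x)} \abs{\Phi}^2 \leq 4\pi r^3 m_x(r)$, so $\abs{\fr_D} \leq cr^2 m_x(r)$, as required.

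There is no real obstacle here: this proposition is essentially the divergence-theorem repackaging of the Lichnerowicz--Weitzenböck identity, and the only analytic input beyond pointwise algebra is the interior-to-boundary $L^2$ comparison from \autoref{Prop_L2Bounds}. The slight subtlety to watch is the sign and normalization in converting $\del_\nu \abs{\Phi}^2$ to $2\inner{\nabla_{A,\del_r}\Phi}{\Phi}$ (so that the final coefficient is $\frac{1}{4\pi r}$ and not, e.g., $\frac{1}{2\pi r}$), and absorbing the zeroth-order curvature term $\fK$ into the universal constant $c$ when producing $\fr_D$.
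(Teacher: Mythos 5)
Your proof is correct and is exactly the paper's argument: the paper proves this proposition by citing \autoref{Cor_IntegrationByPartsFormula} with $f = 1$ and $U = B_r(x)$ together with \autoref{Prop_L2Bounds}, which is precisely the computation and bound you spell out.
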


\begin{proof}
  This is a consequence of \autoref{Cor_IntegrationByPartsFormula} with $f = 1$ and $U = B_r(x)$ and \autoref{Prop_L2Bounds}.
\end{proof}

\begin{prop}
  \label{Prop_M'}
  For every $r \in (0,r_0]$,
  \begin{equation*}
    m_x'(r) = \frac{2D_x(r)}{r} + \fr_{m'}
  \end{equation*}
  with
  \begin{equation*}
    \abs{\fr_{m'}} \leq crm_x(r).
  \end{equation*}
\end{prop}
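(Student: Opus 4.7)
The plan is to differentiate $m_x(r) = \frac{1}{4\pi r^2}\int_{\partial B_r(x)} |\Phi|^2$ directly, reusing the identity established in the proof of \autoref{Prop_L2Bounds}. Write
\begin{equation*}
  m_x'(r)
  =
  -\frac{2}{4\pi r^{3}}\int_{\partial B_r(x)}|\Phi|^2
  +\frac{1}{4\pi r^{2}}\frac{\rd}{\rd r}\int_{\partial B_r(x)}|\Phi|^2.
\end{equation*}
The first summand is precisely $-\tfrac{2}{r}m_x(r)$.

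For the second summand, I would invoke the computation already carried out in the proof of \autoref{Prop_L2Bounds}: using \autoref{Cor_IntegrationByPartsFormula} with $f=1$ and $U=B_r(x)$ together with the general identity $\frac{\rd}{\rd r}\int_{\partial B_r(x)} u = \int_{\partial B_r(x)} H_{x,r}\,u + \int_{\partial B_r(x)} \partial_r u$, one obtains
\begin{equation*}
  \frac{\rd}{\rd r}\int_{\partial B_r(x)}|\Phi|^2
  =
  \int_{\partial B_r(x)} H_{x,r}|\Phi|^2
  +2\int_{B_r(x)}|\nabla_A\Phi|^2+2\epsilon^{-2}|\mu(\Phi)|^2
  +2\int_{B_r(x)}\langle\fK\Phi,\Phi\rangle.
\end{equation*}
Dividing by $4\pi r^2$: the second volume term is exactly $\tfrac{2D_x(r)}{r}$, which is the advertised main term.

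The remaining task is to absorb the boundary term involving the mean curvature and the $\fK$-term into $\fr_{m'}$. Using the geodesic-normal-coordinates expansion $H_{x,r}=\tfrac{2}{r}+O(r)$, the mean-curvature contribution equals
\begin{equation*}
  \frac{1}{4\pi r^{2}}\int_{\partial B_r(x)}\bigl(\tfrac{2}{r}+O(r)\bigr)|\Phi|^2
  =\frac{2m_x(r)}{r}+O(r)\,m_x(r),
\end{equation*}
and the leading $\tfrac{2m_x(r)}{r}$ cancels the $-\tfrac{2m_x(r)}{r}$ from differentiating $r^{-2}$. The pointwise bound $|\fK|\leq c$, combined with the lower $L^2$-bound from \autoref{Prop_L2Bounds} giving $\int_{B_r(x)}|\Phi|^2\leq cr^3 m_x(r)$, controls the $\fK$-term by $crm_x(r)$. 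Collecting the error contributions yields $|\fr_{m'}|\leq crm_x(r)$, which completes the proof. No step is particularly delicate — the only thing to be careful about is keeping track of the exact constant in the cancellation of the $2/r$ terms, which is forced by the standard expansion of the mean curvature of small geodesic spheres.
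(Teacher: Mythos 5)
Your argument is correct and is essentially the paper's own proof: the paper also differentiates the boundary integral, uses the first-variation formula with the mean curvature $H_{x,r}=\tfrac{2}{r}+O(r)$, converts the flux term $\int_{\del B_r(x)}\del_r\abs{\Phi}^2$ into $2\int_{B_r(x)}\abs{\nabla_A\Phi}^2+2\epsilon^{-2}\abs{\mu(\Phi)}^2$ plus a $\fK$-term via \autoref{Cor_IntegrationByPartsFormula} with $f=1$, and absorbs the $\fK$-term using the volume bound $\int_{B_r(x)}\abs{\Phi}^2\leq cr^3m_x(r)$ from \autoref{Prop_L2Bounds}. The only difference is organizational: the paper packages the flux-to-Dirichlet-energy step as \autoref{Prop_D} (with error $\fr_D$, $\abs{\fr_D}\leq cr^2m_x(r)$) and cites it, whereas you inline the same computation.
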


\begin{proof}
  Denote by $H_{x,r}$ the mean curvature of $\del B_r(x)$.
  By \autoref{Cor_IntegrationByPartsFormula},
  \begin{align*}
    m_x(r)'
    &=
      \frac{1}{2\pi r^2} \int_{\del B_r(x)} \paren*{H_{x,r}-\tfrac{1}{r}}\abs{\Phi}^2
      + \frac{1}{4\pi r^2} \int_{\del B_r(x)} \del_r\abs{\Phi}^2 \\
    &=
      \frac{2D_x(r)}{r}
      + \frac{1}{2\pi r^2} \int_{\del B_r(x)} \paren*{H_{x,r}-\tfrac1r} \abs{\Phi}^2
      - \frac{2\fr_D}{r}.
  \end{align*}
  The assertion follows since $\abs*{H_{x,r}-\frac2r} \leq cr$.
\end{proof}

\begin{cor}
  \label{Cor_MAlmostIncreasing}
  For every $x \in M$ and $0 < s < r \leq r_0$,
  \begin{equation*}
    m_x(s)
    \leq
    \paren*{1+cr^2}m_x(r).
  \end{equation*}
\end{cor}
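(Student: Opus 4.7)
The plan is to read off the result directly from Proposition \ref{Prop_M'}, which provides the differential inequality that integrates to the desired almost-monotonicity.

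First I would dispose of the trivial case $s \leq r_{-1,x}$: by definition of $r_{-1,x}$, we have $m_x(s) = 0$, so the inequality holds with no work. Thus it suffices to treat $r_{-1,x} < s < r \leq r_0$, on which interval $m_x$ is smooth and strictly positive.

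Next, I would observe that $D_x(r) \geq 0$ since it is the integral of a sum of non-negative quantities. Combining this with Proposition \ref{Prop_M'} gives the one-sided differential inequality
\begin{equation*}
  m_x'(r) = \frac{2D_x(r)}{r} + \fr_{m'} \geq -cr\,m_x(r)
\end{equation*}
for all $r \in (r_{-1,x},r_0]$. Dividing by $m_x(r) > 0$, this reads $(\log m_x)'(r) \geq -cr$, which integrates on $[s,r]$ to
\begin{equation*}
  \log m_x(r) - \log m_x(s) \geq -\tfrac{c}{2}(r^2-s^2),
\end{equation*}
hence $m_x(s) \leq e^{c(r^2-s^2)/2} m_x(r) \leq e^{cr^2/2} m_x(r)$. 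Since $r \leq r_0$ is bounded, the elementary inequality $e^t \leq 1 + c't$ for $t \in [0, cr_0^2/2]$ upgrades this to $m_x(s) \leq (1+cr^2)m_x(r)$ after absorbing the constant. This is precisely the asserted bound.

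There is no real obstacle; the whole content of the corollary is already packaged in Proposition \ref{Prop_M'}, and the corollary is a straightforward Grönwall-type consequence, entirely parallel to the derivation of Proposition \ref{Prop_NAlmostIncreasing} from Proposition \ref{Prop_N'} carried out earlier in the excerpt. The only minor point worth being careful about is the bookkeeping at $s = r_{-1,x}$, handled by the trivial case above.
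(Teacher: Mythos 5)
Your proposal is correct and is essentially the paper's (implicit) argument: the corollary is left as an immediate consequence of \autoref{Prop_M'}, since $D_x \geq 0$ gives $m_x'(r) \geq -cr\,m_x(r)$, which integrates exactly as in the derivation of \autoref{Prop_NAlmostIncreasing} from \autoref{Prop_N'}. One cosmetic simplification: integrating $\frac{\rd}{\rd r}\paren*{e^{cr^2/2}m_x(r)} \geq 0$ directly avoids dividing by $m_x$, and hence removes any need for the case distinction at $r_{-1,x}$ (where, strictly speaking, $m_x(s)=0$ for $s\leq r_{-1,x}$ requires a word via \autoref{Prop_L2Bounds} rather than the definition alone).
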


\begin{proof}[Proof of \autoref{Prop_N'}]
  By \autoref{Prop_D'} and \autoref{Prop_M'},
  \begin{align*}
    \scN_x'(r)
    &=
      \frac{D_x'(r)}{m_x(r)} - \frac{D_x(r)m_x'(r)}{m_x(r)^2} \\
    &=
      \frac{1}{2\pi r m_x(r)}\int_{\del B_r(x)} \abs{\nabla_{A,\del_r}\Phi}^2 + \epsilon^{-2}\abs{i_{\del_r}\mu(\Phi)}^2 - \frac{2D_x(r)^2}{rm_x(r)^2}
      + \frac{\fr_{D'}}{m_x(r)}
      - \frac{\fr_{m'}}{m_x(r)}\scN_x(r).
  \end{align*}  
  By  \autoref{Prop_D},
  \begin{align*}
    &\frac{1}{2\pi rm_x(r)}\int_{\del B_r(x)} \abs{\nabla_{A,\del_r}\Phi - \frac1r \scN_x(r)\Phi}^2 \\
    &\quad=
      \frac{1}{2\pi rm_x(r)}\int_{\del B_r(x)} \abs{\nabla_{A,\del_r}\Phi}^2
      - \frac{\scN_x(r)}{\pi r^2m_x(r)} \int_{\del B_r(x)} \inner{\nabla_{A,\del_r}\Phi}{\Phi}
      + \frac2r \scN_x(r)^2 \\
    &\quad=
      \frac{1}{2\pi rm_x(r)}\int_{\del B_r(x)} \abs{\nabla_{A,\del_r}\Phi}^2
      - \frac2r \scN_x(r)^2
      + \frac{4\fr_D}{rm_x(r)}\scN_x(r).
  \end{align*}
  Therefore,
  \begin{align*}
    \scN_x'(r)
    &=
      \frac{1}{2\pi r m_x(r)}\int_{\del B_r(x)} \abs{\nabla_{A,\del_r}\Phi}^2 + \epsilon^{-2}\abs{i_{\del_r}\mu(\Phi)}^2
      - \frac{2}{r}\scN_x(r)^2
      + \frac{\fr_{D'}}{m_x(r)}
      - \frac{\fr_{m'}}{m_x(r)}\scN_x(r) \\
    &=
      \frac{1}{2\pi rm_x(r)}\int_{\del B_r(x)} \abs{\nabla_{A,\del_r}\Phi - \frac1r \scN_x(r)\Phi}^2 + \epsilon^{-2}\abs{i_{\del_r}\mu(\Phi)}^2 \\
    &\quad
      + \underbrace{\frac{\fr_{D'}}{m_x(r)} - \paren*{\frac{4\fr_D}{rm_x(r)} + \frac{\fr_{m'}}{m_x(r)}}\scN_x(r)}_{\eqcolon\,\star}.
  \end{align*}
  This completes the proof since $\abs{\star} \leq cr\paren*{1+\scN_x(r)}$.
\end{proof}

\subsection{\texorpdfstring{$\scN$}{N} controls the growth of \texorpdfstring{$m$}{m}}

\begin{prop}
  \label{Prop_NControlsGrowthOfM}
  For every $x \in M$ and $0 < s < r \leq r_0$,
  \begin{equation*}
    \paren*{\frac{r}{s}}^{(2-cr^2)\scN_x(s)-cr^2}m_x(s)
    \leq m_x(r)
    \leq \paren*{\frac{r}{s}}^{(2+cr^2)\scN_x(r)+cr^2}m_x(s).
  \end{equation*}
\end{prop}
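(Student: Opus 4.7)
The plan is to derive both inequalities by integrating a pointwise ODE for $\log m_x$. If $m_x(s)=0$, then \autoref{Prop_L2Bounds} forces $\Phi$ to vanish on a ball around $x$, and unique continuation for $\slD_A$ yields $\Phi\equiv 0$; both inequalities then hold trivially. So we may assume $m_x(s)>0$, and by \autoref{Cor_MAlmostIncreasing} also $m_x(t)>0$ for every $t\in[s,r]$, so $\log m_x$ is well defined on $[s,r]$.

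First, \autoref{Prop_M'} immediately gives
$$\frac{\rd}{\rd t}\log m_x(t) \;=\; \frac{m_x'(t)}{m_x(t)} \;=\; \frac{2\scN_x(t)}{t} + e(t) \qwithq |e(t)|\leq ct.$$
Second, \autoref{Prop_NAlmostIncreasing}, applied once directly and once after solving for $\scN_x(t)$ in $\scN_x(s)\leq(1+ct^2)\scN_x(t)+ct^2$ and using $1/(1+ct^2)\geq 1-ct^2$, yields the sandwich
$$(1-ct^2)\scN_x(s)-ct^2 \;\leq\; \scN_x(t) \;\leq\; (1+cr^2)\scN_x(r)+cr^2 \qforallq t\in[s,r].$$

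Inserting this sandwich into the first display and integrating from $s$ to $r$ gives
$$\bigl((2-cr^2)\scN_x(s)-cr^2\bigr)\log(r/s) - c(r^2-s^2) \;\leq\; \log\frac{m_x(r)}{m_x(s)} \;\leq\; \bigl((2+cr^2)\scN_x(r)+cr^2\bigr)\log(r/s) + c(r^2-s^2).$$

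Finally, to repackage the additive remainder one uses the elementary inequality $r^2-s^2\leq 2r^2\log(r/s)$: indeed $\log(r/s)\geq 1-s/r=(r-s)/r$ from $\log x\leq x-1$, hence $2r^2\log(r/s)\geq 2r(r-s)\geq(r+s)(r-s)$. Absorbing $c(r^2-s^2)$ into the coefficient $cr^2\log(r/s)$ and readjusting the universal constant $c$, exponentiation yields the claimed bounds. The main (minor) obstacle is the lower half of the $\scN_x(t)$-sandwich, since \autoref{Prop_NAlmostIncreasing} is phrased as an upper bound; inverting it costs the factor $1-ct^2$, which is responsible for the $(2-cr^2)$ in the exponent of the lower bound.
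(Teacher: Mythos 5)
Your proposal is correct and takes essentially the same route as the paper: both combine \autoref{Prop_M'} with \autoref{Prop_NAlmostIncreasing} (inverted to get the lower half of the sandwich) to bound $\frac{\rd}{\rd t}\log m_x(t)$ between $\frac{2(1-cr^2)\scN_x(s)-cr^2}{t}$ and $\frac{2(1+cr^2)\scN_x(r)+cr^2}{t}$ and then integrate. The only cosmetic difference is bookkeeping of the additive error: the paper absorbs $ct \leq cr^2/t$ before integrating, while you integrate first and absorb $c(r^2-s^2)$ using $r^2-s^2\leq 2r^2\log(r/s)$; your side remark on the degenerate case $m_x(s)=0$ is harmless, since there the frequency is undefined and the statement is vacuous anyway.
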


\begin{proof}
  By \autoref{Prop_NAlmostIncreasing} and \autoref{Prop_M'},
  for $t \in [s,r]$,
  \begin{align*}
    \frac{\rd}{\rd t}\log m_x(t)
    &\leq
      \frac{2\scN_x(t)}{t} + ct \\
    &\leq
      \frac{2(1+cr^2)}{t}\scN_x(r)
      + \frac{cr^2}{t}
  \end{align*}
  as well as
  \begin{equation*}
    \frac{\rd}{\rd t}\log m_x(t)
    \geq
    \frac{2(1-cr^2)}{t}\scN_x(s)
    - \frac{cr^2}{t}.
  \end{equation*}  
  These integrate to the asserted inequalities.
\end{proof}

\begin{prop}
  \label{Prop_M>0}
  If $\Phi \neq 0$,
  then,
  for every $x \in M$ and $r \in (0,r_0]$,
  \begin{equation*}
    m_x(r) > 0;
  \end{equation*}
  in particular, $r_{-1,x} = 0$.
\end{prop}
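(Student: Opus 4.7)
The plan is to argue by contradiction using the almost-monotonicity of $m_x$ together with a unique continuation theorem. Suppose $m_x(r_1) = 0$ for some $r_1 \in (0, r_0]$. By \autoref{Cor_MAlmostIncreasing} applied with $r = r_1$, one obtains $m_x(s) \leq (1 + cr_1^2)\, m_x(r_1) = 0$ for every $s \in (0, r_1)$. Since $m_x \geq 0$, this forces $m_x \equiv 0$ on $(0, r_1]$, so that $\int_{\del B_s(x)} |\Phi|^2 = 0$ for every such $s$ and hence $\Phi \equiv 0$ on the open ball $B_{r_1}(x)$.

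The second step is to propagate this vanishing globally via unique continuation. The Lichnerowicz--Weitzenböck formula \autoref{Prop_LichnerowiczWeitzenbock} combined with $\slD_A\Phi = 0$ gives the second order linear equation
\begin{equation*}
  \nabla_A^*\nabla_A\Phi + \bgamma(F_{\Ad(A)})\Phi + \fK\Phi = 0.
\end{equation*}
Regarding $(A,\Phi)$ as fixed, this is a linear elliptic equation for $\Phi$ whose principal part is a covariant Laplacian and whose zero-th order coefficient $\bgamma(F_{\Ad(A)}) + \fK$ is smooth (since $A$ is a smooth connection). Aronszajn's unique continuation theorem therefore applies and, together with the connectedness of $M$, forces $\Phi \equiv 0$ on all of $M$, contradicting the hypothesis $\Phi \neq 0$.

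The final assertion $r_{-1,x} = 0$ follows directly from the definition, since the set $\set{r \in (0,\infty) : m_x(r) = 0}$ is now empty.

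I do not foresee any real obstacle: the almost-monotonicity is already packaged in \autoref{Cor_MAlmostIncreasing} via \autoref{Prop_M'}, and Aronszajn's theorem is classical for this class of equations. The only mild subtlety is to view the Lichnerowicz--Weitzenböck equation as \emph{linear} in $\Phi$ by treating $\bgamma(F_{\Ad(A)})$ as a prescribed smooth coefficient along the given solution; this is legitimate precisely because we only need uniqueness along the given pair $(A,\Phi)$, not an abstract statement about the nonlinear system.
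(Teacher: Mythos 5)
Your argument is correct, but the global propagation step takes a genuinely different route from the paper. You and the paper agree on the first step: vanishing of $m_x$ at one radius forces $m_x \equiv 0$ at all smaller radii (you via \autoref{Cor_MAlmostIncreasing}, the paper via the growth bound \autoref{Prop_NControlsGrowthOfM} — same monotonicity content), hence $\Phi \equiv 0$ on a ball. For the propagation to all of $M$, the paper stays entirely inside its own frequency machinery: the upper bound in \autoref{Prop_NControlsGrowthOfM}, read in the contrapositive, shows that vanishing of $m_x$ at a small radius forces $m_x \equiv 0$ up to radius $r_0$, so $\Phi$ vanishes on the full ball $B_{r_0}(x)$, and then one iterates over base points $y \in B_{r_0/2}(x)$ to sweep out $M$. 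You instead pass through the Lichnerowicz--Weitzenböck identity to the linear equation $\nabla_A^*\nabla_A\Phi + \bgamma(F_{\Ad(A)})\Phi + \fK\Phi = 0$ and invoke Aronszajn's unique continuation theorem; this is legitimate since the principal part is the scalar Laplacian acting diagonally, the coefficients along the given smooth solution are bounded, and $\Phi$ vanishes on a nonempty open set of the (implicitly connected) $M$. What each buys: your route is shorter and leans on a classical black box that is standard in gauge theory (one could equally use unique continuation for $\slD_A$ itself); the paper's route is self-contained, uses only estimates already proved, and — as the end of \autoref{Sec_ProofOfAbstractCompactness} shows — has the advantage of surviving in low-regularity situations (the limiting $\Phi$ is only weak $W^{2,2}_{\loc}$ away from $Z$) where quoting Aronszajn with smooth coefficients would need extra care. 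For the proposition as stated, where $(A,\Phi)$ is a smooth solution, your proof is complete; the final remark that $r_{-1,x} = 0$ follows with the same (empty-supremum) convention the paper uses.
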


\begin{proof}  
  If $m_x(r) = 0$,
  for some $r \in (0,r_0]$,
  then it follows from \autoref{Prop_NControlsGrowthOfM} that $m_x = 0$.
  Therefore, $\Phi$ vanishes on $B_{r_0}(x)$.
  This in turn implies that $m_y(r_0/2)$ vanishes for all $y \in B_{r_0/2}(x)$.
  Hence, $\Phi$ vanishes on $B_{\frac32 r_0}(x)$.
  Repeating this argument shows that $\Phi$ vanishes on all of $M$.
\end{proof}

\subsection{Frequency bounds}

\begin{prop}
  \label{Prop_PhiControlsN}
  For every $r_\star \in (0,r_0]$ and $\delta > 0$,
  if
  \begin{equation*}
    0 < s \leq r_\star \min\set*{1,\paren*{\frac{\abs{\Phi}^2(x)}{2m_x(r_\star)}}^{1/\delta}},
  \end{equation*}
  then
  \begin{equation*}
    \scN_x(s) \leq 2\delta + r_\star.
  \end{equation*}
\end{prop}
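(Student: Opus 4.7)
The plan is to exploit the lower bound in \autoref{Prop_NControlsGrowthOfM} together with the continuity of $\abs{\Phi}^2$ at $x$. A large value of $\scN_x(s)$ would force $m_x(r_\star)$ to dominate $m_x(s) \approx \abs{\Phi}^2(x)$ by too large a factor, contradicting the quantitative smallness of $s$ built into the hypothesis.

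The first step is to extract the lower bound $m_x(s) \geq \abs{\Phi}^2(x)/(1+cs^2)$. This follows from \autoref{Cor_MAlmostIncreasing} with outer radius $s$ and inner radius $s' \to 0^+$, using that $\lim_{s' \to 0} m_x(s') = \abs{\Phi}^2(x)$ by continuity of $\abs{\Phi}^2$ at $x$. Combined with the lower bound in \autoref{Prop_NControlsGrowthOfM} applied to the pair $(s, r_\star)$, this yields
\begin{equation*}
  m_x(r_\star) \geq \paren*{\frac{r_\star}{s}}^{(2-cr_\star^2)\scN_x(s) - cr_\star^2} \cdot \frac{\abs{\Phi}^2(x)}{1+cs^2}.
\end{equation*}

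Next, I would feed in the hypothesis. The only interesting case is when the minimum defining the upper bound on $s$ is achieved by the second argument, so that $(r_\star/s)^\delta \geq 2m_x(r_\star)/\abs{\Phi}^2(x)$; the alternative case forces $\abs{\Phi}^2(x) \geq 2m_x(r_\star)$, which is incompatible with the inequality $\abs{\Phi}^2(x) \leq (1+cr_\star^2)m_x(r_\star)$ from \autoref{Cor_MAlmostIncreasing} once $r_0$ is small. Substituting and rearranging yields
\begin{equation*}
  \paren*{\frac{r_\star}{s}}^{(2-cr_\star^2)\scN_x(s) - cr_\star^2 - \delta} \leq \frac{1+cs^2}{2}.
\end{equation*}
Once $r_0$ is shrunk so that the right-hand side is strictly less than $1$, and noting that $s < r_\star$ holds strictly in this case, the exponent on the left must be non-positive. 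Solving that inequality gives $\scN_x(s) \leq (\delta + cr_\star^2)/(2-cr_\star^2)$, which is $\leq 2\delta + r_\star$ after a final shrinking of $r_0$.

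The step requiring the most care is bookkeeping the various $cr_\star^2$ error corrections that propagate through \autoref{Cor_MAlmostIncreasing} and \autoref{Prop_NControlsGrowthOfM}, together with verifying that the degenerate boundary case $s = r_\star$ cannot occur under the hypothesis for small $r_0$. The core of the argument is a purely algebraic manipulation of the two almost-monotonicity estimates already available; no new analytic input is required.
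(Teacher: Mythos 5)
Your argument is correct and is essentially the paper's own proof: both combine the limit $s'\to 0$ of \autoref{Cor_MAlmostIncreasing} (giving $\abs{\Phi}^2(x)\leq(1+cs^2)m_x(s)$) with the lower bound of \autoref{Prop_NControlsGrowthOfM} and then feed in the hypothesis on $s$, relying on the standing convention that $r_0$ is small relative to universal constants. The only difference is presentational --- you rule out the degenerate branch of the minimum and argue via the sign of the exponent, whereas the paper takes logarithms directly --- and both routes in fact yield slightly more than the stated bound $2\delta+r_\star$.
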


\begin{proof}
  By \autoref{Cor_MAlmostIncreasing} and \autoref{Prop_NControlsGrowthOfM},
  for every $s \in (0,r_\star]$,  
  \begin{equation*}
    \paren*{\frac{r_\star}{s}}^{\scN_x(s)-r_\star}
    \abs{\Phi}^2(x)
    \leq
    2m_x(r_\star).
  \end{equation*}
  Therefore,
  \begin{equation*}
    \scN_x(s)
    \leq    \frac{\log\paren*{\frac{2m_x(r_\star)}{\abs{\Phi}^2(x)}}}{\log\paren*{\frac{r_\star}{s}}}
    + r_\star.
  \end{equation*}
  This implies the asserted inequality.
\end{proof}

\subsection{Varying the base-point}

\begin{prop}
  \label{Prop_NDependenceOnX}
  There is a constant $c > 0$ such that,
  for every $x \in M$ and $r \in (0,r_0/4]$,
  if $\scN_x(4r) \leq 1$,
  then,
  for every $y \in B_r(x)$ and $s \in (0,2r]$,
  \begin{equation*}
    \scN_y(s) \leq c\paren*{\scN_x(4r) + r^2}.
  \end{equation*}
\end{prop}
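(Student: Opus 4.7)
The plan is to reduce the statement to the single scale $s = 2r$ and then propagate to smaller scales by the almost-monotonicity of the frequency function (\autoref{Prop_NAlmostIncreasing}). More precisely, I will bound $D_y(2r)$ from above and $m_y(2r)$ from below in terms of data for $x$ at scale $4r$; dividing will give $\scN_y(2r) \leq c(\scN_x(4r) + r^2)$, and then applying \autoref{Prop_NAlmostIncreasing} at $y$ with radius $2r$ (which uses $r_{-1,y} = 0$, legitimate when $\Phi \neq 0$ by \autoref{Prop_M>0}; the case $\Phi = 0$ is vacuous) yields
\[
\scN_y(s) \leq (1 + c r^2) \scN_y(2r) + cr^2 \leq c(\scN_x(4r) + r^2)
\qquad\text{for all } s \in (0, 2r].
\]

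For the upper bound on the numerator, the inclusion $B_{2r}(y) \subset B_{3r}(x)$ (which uses $d(x,y) < r$) gives
\[
8\pi r \cdot D_y(2r) = \int_{B_{2r}(y)} \abs{\nabla_A\Phi}^2 + 2\epsilon^{-2}\abs{\mu(\Phi)}^2 \leq 12\pi r \cdot D_x(3r),
\]
so $D_y(2r) \leq \tfrac32 D_x(3r) = \tfrac32 \scN_x(3r) m_x(3r)$. The almost-monotonicity of $\scN_x$ (\autoref{Prop_NAlmostIncreasing}) together with $\scN_x(4r) \leq 1$ gives $\scN_x(3r) \leq c(\scN_x(4r) + r^2)$, and \autoref{Cor_MAlmostIncreasing} gives $m_x(3r) \leq (1+cr^2)m_x(4r)$. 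Thus
\[
D_y(2r) \leq c(\scN_x(4r) + r^2)\, m_x(4r).
\]

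For the lower bound on the denominator, the inclusion $B_r(x) \subset B_{2r}(y)$ combined with both directions of \autoref{Prop_L2Bounds} yields
\[
32 \pi r^3 \, m_y(2r) \geq \int_{B_{2r}(y)} \abs{\Phi}^2 \geq \int_{B_r(x)} \abs{\Phi}^2 \geq \frac{\pi r^3}{2} m_x(r/2),
\]
so $m_y(2r) \geq \tfrac{1}{64} m_x(r/2)$. Now the upper part of \autoref{Prop_NControlsGrowthOfM} applied at $x$ with inner scale $r/2$ and outer scale $4r$ gives
\[
m_x(4r) \leq 8^{(2 + c r^2)\scN_x(4r) + c r^2} m_x(r/2) \leq c\, m_x(r/2),
\]
where in the last step we used the hypothesis $\scN_x(4r) \leq 1$ and $r \leq r_0/4$ to bound the exponent uniformly. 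Combining,
\[
m_y(2r) \geq c^{-1}\, m_x(4r).
\]

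Dividing the numerator bound by the denominator bound eliminates $m_x(4r)$ and produces $\scN_y(2r) \leq c(\scN_x(4r) + r^2)$, and then the almost-monotonicity step above completes the argument. The one subtle point is ensuring that the factor coming from $\scN_x(r') \leq 1$ in \autoref{Prop_NControlsGrowthOfM} remains uniformly bounded; this is precisely where the hypothesis $\scN_x(4r) \leq 1$ is used in an essential way, since otherwise the $m_x$-comparison degenerates. All other steps are routine bookkeeping with the almost-monotonicity results of the previous subsections.
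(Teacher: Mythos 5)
Your proposal is correct and follows essentially the same route as the paper: the heart of both arguments is the comparison $m_x(4r) \leq c\, m_x(r/2) \leq c\, m_y(2r)$ obtained from \autoref{Prop_NControlsGrowthOfM} (using $\scN_x(4r) \leq 1$ to control the exponent) together with both directions of \autoref{Prop_L2Bounds} and the inclusions $B_r(x) \subset B_{2r}(y) \subset B_{3r}(x)$, followed by \autoref{Prop_NAlmostIncreasing} at $y$ to pass from scale $2r$ to all $s \in (0,2r]$. Your minor detour through $D_x(3r)$ and the almost-monotonicity of $\scN_x$ and $m_x$, rather than comparing directly with $D_x(4r) = \scN_x(4r)m_x(4r)$, is only a cosmetic variation.
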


\begin{proof}
  Since $\scN_x(4r) \leq 1$,
  by \autoref{Prop_L2Bounds} and \autoref{Prop_NControlsGrowthOfM},
  \begin{align*}
    m_x(4r)
    \leq
      c m_x(r/2)
    \leq
      cr^{-3}\int_{B_{r}(x)} \abs{\Phi}^2
    \leq
      cr^{-3}\int_{B_{2r}(y)} \abs{\Phi}^2
    \leq
      cm_y(2r).
  \end{align*}
  Therefore,
  \begin{equation*}
    \scN_y(2r)
    \leq
      \frac{m_x(4r)}{m_y(2r)}
      \scN_x(4r) \\
    \leq
      c\scN_x(4r).
  \end{equation*}
  The assertion thus follows from \autoref{Prop_NAlmostIncreasing}.
\end{proof}

\subsection{Decay implies interior bound}

The following result is essentially contained in \cite[Proof of Lemma 6.2]{Taubes2012}.

\begin{lemma}
  \label{Lem_DecayImpliesInteriorBound}
  There is a constant $\delta > 0$ such that the following holds for every $x \in M$ and $r > 0$.
  If $f\co \bar B_r(x) \to [0,\infty)$ is an $L^1$ function such that, for every $y \in M$ and $s > 0$,
  \begin{equation}
    \label{Eq_DecayImpliesInteriorBound}
    B_s(y) \subset B_r(x) \qandq
    s\int_{B_s(y)} f \leq 1 \implies \frac{s}{4}\int_{B_{s/4}(y)} f \leq \delta,
  \end{equation}
  then
  \begin{equation*}
    \frac{r}{2}\int_{B_{r/2}(x)} f \leq 1.
  \end{equation*}
\end{lemma}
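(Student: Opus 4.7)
\medskip

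\textbf{Proof proposal.} The plan is to argue by contradiction and construct a sequence of shrinking balls inside $B_r(x)$ on which $\int f$ blows up, which contradicts $f\in L^1$. Suppose $\frac{r}{2}\int_{B_{r/2}(x)}f>1$ and write $G(y,s)\coloneq s\int_{B_s(y)}f$ whenever $B_s(y)\subset B_r(x)$. For $y\in B_r(x)$ define the \emph{first bad scale}
\[
  \tau(y) \coloneq \inf\set*{s>0 : B_s(y)\subset B_r(x),\ G(y,s)>1}.
\]
Since $G(y,s)\leq s\Abs{f}_{L^1(B_r(x))}\to 0$ as $s\to 0$, one has $\tau(y)>0$ for every $y$; continuity of $s\mapsto G(y,s)$ gives $G(y,\tau(y))=1$ whenever $\tau(y)<\infty$, and applying the assumed implication at scales $s\nearrow\tau(y)$ (where $G(y,s)\leq 1$) yields $G(y,\tau(y)/4)\leq\delta$. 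The contradiction hypothesis is exactly $\tau(x)\leq r/2$.

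The goal is to produce a sequence $(y_k)_{k\geq 0}$ with $y_0=x$ and $\tau_k\coloneq\tau(y_k)\leq 2^{-k}\tau(x)$. Once this is done, $\int_{B_{\tau_k}(y_k)}f=1/\tau_k$ together with $B_{\tau_k}(y_k)\subset B_r(x)$ gives $1/\tau_k\leq\Abs{f}_{L^1(B_r(x))}$ for every $k$, contradicting $\tau_k\to 0$. For the inductive step, the two identities above imply
\[
  \int_{A_k} f \ \geq\ \frac{1-4\delta}{\tau_k}
\]
on the annulus $A_k\coloneq B_{\tau_k}(y_k)\setminus B_{\tau_k/4}(y_k)$. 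Cover $A_k$ by a dimension-bounded number $N$ of balls $B_{\tau_k/8}(z_j)$ with $z_j\in A_k$; by pigeonhole some $z=z_{j_0}$ satisfies $G(z,\tau_k/8)\geq(1-4\delta)/(8N)$. Choosing $\delta<1/(8N+4)$ makes $(1-4\delta)/(8N)>\delta$, so the contrapositive of the hypothesis applied at $(z,\tau_k/2)$ forces $G(z,\tau_k/2)>1$ and hence $\tau(z)\leq\tau_k/2$; set $y_{k+1}\coloneq z$.

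The delicate step I expect to require the most care is verifying at every $k$ that $B_{\tau_k/2}(y_{k+1})\subset B_r(x)$, so that the hypothesis may be applied. Using $d(y_{j+1},y_j)<\tau_j$ and $\tau_j\leq 2^{-j}\tau(x)$, a telescoping triangle inequality gives $d(y_k,x)\leq\tau(x)\sum_{j=0}^{k-1}2^{-j}=2\tau(x)(1-2^{-k})$, and therefore
\[
  d(y_{k+1},x)+\tau_k/2 \ \leq\ 2\tau(x)(1-2^{-k}) + \tfrac{3}{2}\tau(x)\cdot 2^{-k} \ =\ 2\tau(x)-\tau(x)\cdot 2^{-(k+1)} \ <\ 2\tau(x) \ \leq\ r.
\]
Thus the whole iteration is confined to $B_{2\tau(x)}(x)\subset B_r(x)$; this is where the bound $\tau(x)\leq r/2$ is used decisively. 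The value of $\delta$ in the lemma is ultimately dictated by $N$, which depends only on the dimension of $M$ (through a universal doubling constant for the metric).
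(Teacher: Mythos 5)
Your proof is correct, and it runs on the same machinery as the paper's: your first bad scale $\tau(y)$ is (up to inf/sup bookkeeping) the paper's regularity scale $r_f(y)$, your telescoping bound $d(y_{k+1},x)+\tau_k/2<2\tau(x)\le r$ is exactly the paper's confinement computation, and your covering constant $N$ plays the role of its $N_c$. The organization differs, however. The paper builds a \emph{maximal} chain with halving scales, which must terminate at a point $x_\star$ near which the scale cannot halve; it then covers the whole ball $B_{r_f(x_\star)}(x_\star)$ by $N_c$ balls of radius $r_f(x_\star)/8$, applies the hypothesis at \emph{every} center (legitimate since all nearby scales are at least $\tfrac12 r_f(x_\star)$), and obtains $r_f(x_\star)\int_{B_{r_f(x_\star)}(x_\star)}f\le 8N_c\delta<1$, contradicting the definition of $r_f(x_\star)$ as a supremum. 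You instead show the scale can \emph{always} be halved nearby: the hypothesis empties the inner ball, so the annulus carries mass at least $(1-4\delta)/\tau_k$, pigeonhole on the cover finds a center with $G(\cdot,\tau_k/8)>\delta$, and the contrapositive at scale $\tau_k/2$ halves the scale; the resulting infinite descent contradicts $f\in L^1$ via $\int_{B_{\tau_k}(y_k)}f=1/\tau_k\to\infty$. Your route dispenses with the termination/maximality step; its small extra costs are that you need $G(y,\tau(y))=1$ exactly, hence continuity of $s\mapsto s\int_{B_s(y)}f$ (fine, since $f\in L^1$ and geodesic spheres are volume-null), and that the confinement bound must hold for every candidate $z\in A_k$ (so that $B_{\tau_k/8}(z_j)\subset B_r(x)$ and the contrapositive is applicable), which your computation does give since it only uses $d(z,y_k)<\tau_k$. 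One cosmetic point: $N$ depends on the geometry of $(M,g)$ (uniformly, by compactness), not just on the dimension, but this is harmless because $\delta$ is allowed to depend on the fixed geometric data.
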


\begin{proof}
  The \defined{regularity scale} associated with $f$ is the function $r_f\co B_r(x) \to (0,\infty]$ defined by
  \begin{equation*}
    r_f(y)
    \coloneq
    \sup \set*{ s \geq 0 : s\int_{B_s(y)\cap B_r(x)} f \leq 1}.
  \end{equation*}
  If $r_f(x) < \frac{r}{2}$ and $\delta > 0$ is sufficiently small,
  then the following leads to a contradiction.

  Pick a maximal sequence $x_0, x_1, \ldots, x_N$ starting with $x_0 \coloneq x$ and such that,
  for every $n = 0,1,\ldots,N-1$,
  \begin{equation*}
    x_{n+1} \in B_{r_f(x_n)}(x_n) \qandq
    r_f(x_{n+1}) < \tfrac12 r_f(x_n).
  \end{equation*}
  Such a sequence must terminate, because otherwise $(x_n)_{n\in \N}$ converges to a point $x_\infty \in B_r(x)$ with $r_f(x_\infty)=0$, which is a contradiction.
  By maximality,
  $x_\star \coloneq x_N$ is such that,
  for every $y \in B_{r_f(x_\star)}(x_\star)$,
  \begin{equation}
    \label{Eq_RFXStar}
    \tfrac12 r_f(x_\star) \leq r_f(y).
  \end{equation}

  There is a constant $N_c \in \N$ depending only on $B_r(x)$ and a finite set $\set{y_1,\ldots,y_{N_c}} \subset B_{r_f(x_\star)}(x_\star)$ such that
  \begin{equation*}
    B_{r_f(x_\star)}(x_\star)
    \subset
    \bigcup_{n=1}^{N_c} B_{\frac18r_f(x_\star)}(y_n).
  \end{equation*}
  Since $r_f(x) < \frac{r}{2}$,
  by construction of $x_\star$,
  \begin{equation*}
    d(x,x_\star) + r_f(x_\star) + \tfrac12r_f(x_\star)
    < \sum_{n=0}^{N+1} \tfrac{1}{2^n}r_f(x_a) \leq 2r_f(x)
    < r;
  \end{equation*}
  that is:
  \begin{equation*}
    B_{\frac12r_f(x_\star)}(y_n) \subset B_r(x).
  \end{equation*}
  Therefore,
  by \autoref{Eq_DecayImpliesInteriorBound} and \autoref{Eq_RFXStar},
  \begin{equation*}
    \frac{r_f(x_\star)}{8} \int_{B_{\frac18r_f(x_\star)}(y_n)} f
    \leq \delta.
  \end{equation*}
  Hence,
  \begin{equation*}
    r_f(x_\star)\int_{B_{r_f(x_\star)}(x_\star)} f
    \leq
      r_f(x_\star)\sum_{n=1}^{N_c} \int_{B_{\frac18r_f(x_\star)}(y_n)} f
    \leq
      8N_c\delta.
  \end{equation*}
  If $\delta \leq \frac1{16}N_c$,
  then the integral on the left-hand side is at most $\frac12$.
  This, however, contradicts the definition of $r_f(x_\star)$ since $\bar B_{r_f(x_\star)}(x_\star) \subset B_r(x)$.  
\end{proof}

\subsection{Proof of \autoref{Prop_RegularityScaleBound} }
  
Without loss of generality assume that $\abs{\Phi}$ is not identically zero.
Choose $r_{-1}$ and $\delta_\scN$ as in \autoref{Prop_CurvatureDecay} with $c_F$ as in \autoref{Def_RegularityScale}.

If $r_\dagger \in (0,r_{-1}]$ is such that,
for every $B_s(y) \subset B_{r_\dagger}(x)$,
\begin{equation*}
  \scN_y(s) \leq \delta_\scN,
\end{equation*}
then, by \autoref{Prop_CurvatureDecay}, \autoref{Lem_DecayImpliesInteriorBound} applies to
\begin{equation*}
  f \coloneq \frac{\abs{F_A}^2}{c_F}.
\end{equation*}
Therefore,
\begin{equation*}
  \frac{r_\dagger}{4}\int_{B_{r_\dagger/4}(x)} \abs{F_A}^2 \leq c_F;
\end{equation*}
that is:
\begin{equation*}
  r_A(x) \geq \frac{r_\dagger}{4}.
\end{equation*}

Let $0 < \sigma \ll 1$.
By \autoref{Prop_L2Bounds},
\begin{equation*}
  m_x(r) \leq cr_0^{-3}\Abs{\Phi}_{L^2(M)}^2 = cr_0^{-3}.
\end{equation*}
By \autoref{Prop_PhiControlsN},
there is a constant $c > 0$ such that
\begin{equation*}
  \scN_x(4r_\dagger) \leq \sigma\delta_\scN + r_\dagger
  \qforq
  r_\dagger
  \coloneq
  c^{-1}\min\set*{
    1,\abs{\Phi}^{8/\sigma\delta_\scN}(x)
  }.
\end{equation*}
By \autoref{Prop_NDependenceOnX},
for every $B_s(y) \subset B_{r_\dagger}(x)$,
\begin{equation*}
  \scN_y(s) \leq c\paren*{\sigma\delta_\scN + r_\dagger}.
\end{equation*}
Therefore,
after possibly shrinking $r_\dagger$,
for every $B_s(y) \subset B_{r_\dagger}(x)$,
$\scN_y(s) \leq \delta_\scN$.
This finishes the proof.
\qed

\subsection{Hölder bounds}

\begin{prop}
  \label{Prop_HölderBound}
  Suppose that \autoref{Hyp_GammaFControlsF} holds.
  There are constants $\alpha \in (0,1)$ and $c > 0$ such that,
  for every  $A \in \sA(\fs,B)$, $\Phi \in \Gamma(\bS)$, and $\epsilon > 0$ satisfying \autoref{Eq_UnnormalizedBlownupSeibergWitten} and \autoref{Eq_NormalizationOfTheSpinor},
  \begin{equation*}
    [\Phi]_{C^{0,\alpha}(M)} \leq c.
  \end{equation*}
\end{prop}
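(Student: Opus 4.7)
The plan is to combine the regularity-scale bound (\autoref{Prop_RegularityScaleBound}) and the frequency bound (\autoref{Prop_PhiControlsN}) with the refined Hölder estimate that is actually established inside the proof of \autoref{Prop_OscillationBounds}, namely
\begin{equation*}
    [\abs{\Phi}]_{C^{0,1/4}\(B_{r/2}(x)\)}
    \leq
    c(c_F,c_\Phi,c_\scN)\,r^{-1/4}
    \paren*{\scN_x^{A,\Phi,\epsilon}(r)\,m_x^\Phi(r)}^{1/8}.
\end{equation*}
Given $y,z \in M$, set $\rho \coloneq d(y,z)$. By \autoref{Prop_LInftyBounds}, $\Abs{\Phi}_{L^\infty(M)} \leq c$, so the Hölder bound only needs to be established for $\rho$ smaller than a universal constant. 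After assuming without loss of generality that $\abs{\Phi}(y) \geq \abs{\Phi}(z)$, I split on the size of $\abs{\Phi}(y)$.

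The easy case is $\abs{\Phi}(y) \leq \rho^\alpha$, in which $\bigl|\abs{\Phi}(y)-\abs{\Phi}(z)\bigr| \leq \abs{\Phi}(y) \leq \rho^\alpha$ holds trivially. In the genuine case $\abs{\Phi}(y) > \rho^\alpha$, I would set $r \coloneq c_0\,\abs{\Phi}^{1/\delta}(y)$, where $\delta$ is the constant from \autoref{Prop_RegularityScaleBound} and $c_0>0$ is a small universal constant. For $c_0$ small enough, \autoref{Prop_RegularityScaleBound} yields $r \leq r_A(y)$, so the curvature is controlled by $c_F$ on $B_r(y)$. Combining the universal bound $m_y^\Phi(r_0) \leq c$ (a consequence of the $L^\infty$ bound) with \autoref{Prop_PhiControlsN} at $r_\star = r_0$ and with its free parameter taken as $2\delta$ produces $\scN_y^{A,\Phi,\epsilon}(r) \leq 4\delta + r_0 =: c_\scN$ --- again provided $c_0$ is small enough. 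The spinor bound $m_y^\Phi(r) \leq c$ is once more immediate from the $L^\infty$ bound. Finally, $r \geq c_0\,\rho^{\alpha/\delta}$ guarantees $z \in B_{r/2}(y)$ as long as $\alpha < \delta$ and $\rho$ is small.

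Feeding these bounds into the refined Hölder estimate yields
\begin{equation*}
    \bigl|\abs{\Phi}(y)-\abs{\Phi}(z)\bigr|
    \leq c\,r^{-1/4}\,\rho^{1/4}
    \leq c\,\rho^{-\alpha/(4\delta)}\,\rho^{1/4}
    = c\,\rho^{1/4 - \alpha/(4\delta)}.
\end{equation*}
Choosing $\alpha \coloneq \delta/(4\delta+1)$ makes the right-hand exponent equal to $\alpha$, and the constraints $\alpha < \delta$ and $\alpha < 1/4$ are then automatic.

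The principal obstacle is choosing a single scale $r$ that simultaneously (i) sits inside the regularity-scale ball $B_{r_A(y)}(y)$, which forces $r \lesssim \abs{\Phi}^{1/\delta}(y)$, and (ii) is small enough that \autoref{Prop_PhiControlsN} supplies a universal bound on $\scN_y^{A,\Phi,\epsilon}(r)$, which for the free parameter $\delta'$ of that proposition demands $r \lesssim r_0\,\abs{\Phi}^{2/\delta'}(y)$. These two constraints first align at the common scale $r \sim \abs{\Phi}^{1/\delta}(y)$ precisely when $\delta' \geq 2\delta$, and the resulting Hölder exponent turns out comparable to the constant $\delta$ delivered by \autoref{Lem_DecayImpliesInteriorBound}.
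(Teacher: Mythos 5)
Your argument is correct, and it rests on the same two pillars as the paper's own proof: the regularity-scale bound \autoref{Prop_RegularityScaleBound} and the interior H\"older estimate obtained from \autoref{Prop_W22Bounds} via Morrey and Gagliardo--Nirenberg (which you quote in the refined form established inside the proof of \autoref{Prop_OscillationBounds}). The organization, however, is different. The paper splits on whether $d(x,y) \leq r_A(x)^2$: in that case it applies Morrey, Kato, and \autoref{Prop_W22Bounds} at the scale $r_A(x)$ to get $[\abs{\Phi}]_{C^{0,1/2}(B_{r_A(x)/2}(x))} \leq c\, r_A(x)^{-1/2}$, while in the complementary case \autoref{Prop_RegularityScaleBound} turns $d(x,y) \geq r_A(x)^2$ into either $d(x,y) \geq r_{-1}^2$ or $d(x,y) \geq c^{-2}\abs{\Phi}(x)^{2/\delta}$, where the trivial bound $\abs{\Phi}(x)-\abs{\Phi}(y) \leq 2\abs{\Phi}(x)$ suffices; the exponent is $\min\set{\tfrac14,\tfrac{\delta}{2}}$. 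You instead split on whether $\abs{\Phi}(y) \leq d(y,z)^\alpha$, and in the nontrivial case work at the single scale $r = c_0\abs{\Phi}^{1/\delta}(y)$, verifying all three hypotheses of the interior estimate: curvature via $r \leq r_A(y)$, mass via \autoref{Prop_LInftyBounds}, and the frequency via \autoref{Prop_PhiControlsN} with $r_\star = r_0$. That last verification is a genuine merit of your route: \autoref{Prop_W22Bounds} formally requires a frequency bound, which the paper invokes at the scale $r_A(x)$ without spelling it out, whereas your choice of scale makes \autoref{Prop_PhiControlsN} applicable and closes this point explicitly. What the paper's decomposition buys is brevity (no need for the frequency discussion or for the refined form of the oscillation estimate) and a slightly different but equally serviceable exponent; your bookkeeping --- $\alpha = \delta/(4\delta+1)$ so that $\tfrac14 - \tfrac{\alpha}{4\delta} = \alpha$, with $\alpha < \delta$ ensuring $z \in B_{r/2}(y)$ for small $d(y,z)$ and the uniform $L^\infty$ bound disposing of large $d(y,z)$ --- checks out.
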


\begin{proof}
  Let $\delta,r_{-1},c> 0$ be as in \autoref{Prop_RegularityScaleBound}.
  Set $\alpha \coloneq \min\set*{\frac14,\frac12\delta}$.
  Let $x,y \in M$ such that $\abs{\Phi}(x) \geq \abs{\Phi}(y)$.
 
  If $d(x,y) \leq r_A(x)^2$,
  then $d(x,y) \leq r_A(x)/2$ because $r_A \leq r_0$, and
  by Morrey's inequality, Kato's inequality, and \autoref{Prop_W22Bounds},
  \begin{equation*}
    [\abs{\Phi}]_{C^{0,1/2}\paren*{B_{r_A(x)/2}(x)}}
    \leq
      c\Abs{\nabla_A\Phi}_{L^6\paren*{B_{r_A(x)/2}(x)}}
    \leq
      c r_A(x)^{-1/2}
    \leq
    c d(x,y)^{-1/4}.
  \end{equation*}
  Hence,
  \begin{equation*}
    \abs{\Phi}(x)-\abs{\Phi}(y)
    \leq
      c d(x,y)^{1/4}
    \leq
      c d(x,y)^\alpha.
  \end{equation*}

  If $d(x,y) \geq r_A(x)^2$,
  then \autoref{Prop_RegularityScaleBound} either $d(x,y) \geq r_{-1}^2$ or $d(x,y) \geq c^{-2}\abs{\Phi}(x)^{2/\delta}$.
  In the first case, it follows from \autoref{Prop_LInftyBounds} that
  \begin{equation*}
    \abs{\Phi}(x)-\abs{\Phi}(y)
    \leq cd(x,y)^\alpha.
  \end{equation*}
  In the second case,
  \begin{equation*}
    \abs{\Phi}(x)-\abs{\Phi}(y)
    \leq 2\abs{\Phi}(x)
    \leq c^\delta d(x,y)^{\delta/2}.
    \qedhere
  \end{equation*}
\end{proof}


\section{Proof of \autoref{Thm_AbstractCompactness}}
\label{Sec_ProofOfAbstractCompactness}

Let $(A_n,\Phi_n,\epsilon_n)$ be a sequence of solutions of \autoref{Eq_UnnormalizedBlownupSeibergWitten} and \autoref{Eq_NormalizationOfTheSpinor} with $\epsilon_n$ tending to zero.
By \autoref{Prop_LInftyBounds} and \autoref{Prop_HölderBound},
for some $\alpha \in (0,1)$,
\begin{equation*}
  \Abs{\abs{\Phi_n}}_{C^{0,\alpha}} \leq c.
\end{equation*}
Therefore, after passing to a subsequence, for every $\beta \in (0,\alpha)$, $\abs{\Phi_n}$ converges to a limit in the $C^{0,\beta}$--topology.
Denote this limit by $\abs{\Phi}$ and set $Z \coloneq \abs{\Phi}^{-1}(0)$.
Since $\Abs{\Phi}_{L^2} = 1$, $Z$ is a proper subset of $M$.

By \autoref{Prop_RegularityScaleBound},
for every $x \in M\setminus Z$,
\begin{equation*}
  r_A(x) \coloneq \liminf_{n \in \N} r_{A_n}(x) > 0.
\end{equation*}
Therefore,
on every compact subset of $M\setminus Z$, the $L^2$--norms of $F_{A_n}$ are uniformly bounded;
and, up to gauge transformations and after passing to a subsequence,
$(A_n)$ can be assumed to converge in the weak $W^{1,2}$ topology to a limit $A$.
Moreover, by \autoref{Prop_W22Bounds},
after passing to a further subsequence,
$(\Phi_n)$ converges in the weak $W^{2,2}$ topology to a limit $\Phi$.
A patching argument as in \cite[Section 4.2.2]{Donaldson1990} yields asserted convergence statement on $M\setminus Z$.
By construction,
the limit $(A,\Phi)$ satisfies \autoref{Eq_LimitingSeibergWitten}.

It remains to prove that $Z$ is nowhere-dense.
The proof of this fact relies on the following.

\begin{prop}
  For every $x \in M$ and $r \in (0,r_0]$,
  \begin{align*}
    \lim_{n\to \infty} \int_{B_r(x)} \abs{\nabla_{A_n} \Phi_n}^2 + 2\epsilon_n^{-2}\abs{\mu(\Phi_n)}^2
    &=
     \int_{B_r(x)} \abs{\nabla_A \Phi}^2 \qand \\
    \lim_{n\to \infty} \int_{\del B_r(x)} \abs{\Phi_n}^2
    &=
      \int_{\del B_r(x)} \abs{\Phi}^2.
  \end{align*}
\end{prop}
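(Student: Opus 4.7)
The plan is to use \autoref{Cor_IntegrationByPartsFormula} to rewrite the Dirichlet-type energy in terms of integrals involving only $\abs{\Phi}^2$---on which the uniform $C^{0,\alpha}$-convergence $\abs{\Phi_n}\to\abs{\Phi}$ and the vanishing $\abs{\Phi}|_Z \equiv 0$ supply full control---and then to sandwich the indicator of $B_r(x)$ between smooth cutoffs.  Set $J_n \coloneq \abs{\nabla_{A_n}\Phi_n}^2 + 2\epsilon_n^{-2}\abs{\mu(\Phi_n)}^2$.  Applying \autoref{Cor_IntegrationByPartsFormula} with $U=M$ (which is closed, hence has no boundary) and $\chi\in C^\infty(M)$ gives
\begin{equation*}
  \int_M \chi\,J_n
  =
  -\tfrac12 \int_M \Delta\chi\cdot\abs{\Phi_n}^2
  - \int_M \chi\,\inner{\fK\Phi_n}{\Phi_n}.
\end{equation*}

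Passing $n\to\infty$ with $\chi$ fixed, the first right-hand term converges by the uniform $C^0$-convergence of $\abs{\Phi_n}^2$.  For the second, decompose $M$ into a compact $K$ contained in $M\setminus Z$ and a tubular neighbourhood $N_\eta(Z)$: on $K$, the weak $W^{2,2}_{\loc}$-convergence $\Phi_n\to\Phi$ combined with the compact Sobolev embedding $W^{2,2}\hookrightarrow C^0$ in dimension three yields uniform convergence $\Phi_n\to\Phi$, so the integral over $K$ converges; on $N_\eta(Z)$, the Hölder bound of \autoref{Prop_HölderBound} and $\abs{\Phi}|_Z=0$ give $\abs{\Phi_n}^2\leq c\eta^{2\alpha}+o(1)$, which makes the contribution there vanish as $\eta\to 0$, uniformly in $n$.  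Since $\slD_A\Phi=0$ and $\mu(\Phi)=0$ on $M\setminus Z$, the Bochner identity reduces to
\begin{equation*}
  \tfrac12\Delta\abs{\Phi}^2 + \abs{\nabla_A\Phi}^2 + \inner{\fK\Phi}{\Phi} = 0 \qforq y\in M\setminus Z;
\end{equation*}
integrating by parts against $\chi$ supported in $M\setminus Z$ gives $\int\chi\abs{\nabla_A\Phi}^2 = -\tfrac12\int\Delta\chi\cdot\abs{\Phi}^2 - \int\chi\inner{\fK\Phi}{\Phi}$.  Extending this identity to arbitrary $\chi\in C^\infty(M)$ by the approximation $\chi\leadsto \chi(1-\psi_\delta)$ with $\psi_\delta$ a cutoff near $Z$ yields $\int_M\chi J_n \to \int_M\chi\abs{\nabla_A\Phi}^2$ for every smooth $\chi$.

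To conclude, sandwich $\mathbf{1}_{B_r(x)}$ between smooth cutoffs $\chi_\eta^{\pm}\in C^\infty(M)$ with $\chi_\eta^{-}\leq\mathbf{1}_{B_r(x)}\leq\chi_\eta^{+}$ and $\chi_\eta^{\pm}\to\mathbf{1}_{B_r(x)}$ pointwise as $\eta\to 0$.  From $\int\chi_\eta^{-}J_n \leq \int_{B_r(x)}J_n \leq \int\chi_\eta^{+}J_n$, passing to $n\to\infty$ using the previous paragraph and then $\eta\to 0$ by monotone/dominated convergence (applicable because $\abs{\nabla_A\Phi}^2\in L^1(M)$, which follows from taking $\chi\equiv 1$ above and Fatou) gives the bulk limit for every $r\in(0,r_0]$.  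The boundary limit $\int_{\del B_r(x)}\abs{\Phi_n}^2 \to \int_{\del B_r(x)}\abs{\Phi}^2$ is immediate from the uniform $C^0$-convergence of $\abs{\Phi_n}^2$.  The main obstacle is the density step in the previous paragraph: extending the limiting integration-by-parts from test functions supported in $M\setminus Z$ to all of $C^\infty(M)$ requires the $Z$-error terms $\int_{N_\delta(Z)}\abs{\Delta\psi_\delta}\abs{\Phi}^2$ and $\int_{N_\delta(Z)}\abs{\nabla\psi_\delta}\cdot\abs{\nabla\abs{\Phi}^2}$ to vanish as $\delta\to 0$; this rests delicately on the Hölder decay $\abs{\Phi}(y)\leq c\,d(y,Z)^\alpha$, where the exponent $\alpha$ from \autoref{Prop_HölderBound} can be small, so the cutoff $\psi_\delta$ must be chosen (logarithmically, for instance) so that the volume contribution of $N_\delta(Z)$ compensates the growth $\abs{\Delta\psi_\delta}\sim\delta^{-2}$.
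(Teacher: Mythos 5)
There is a genuine gap, and it is exactly the step you flag at the end: extending the limiting integration-by-parts identity from test functions supported in $M\setminus Z$ to all of $C^\infty(M)$. At this point in the argument \emph{nothing} is known about the size of $Z$ beyond its being closed --- the proposition under discussion is used precisely to prove that $Z$ is nowhere dense, and the rectifiability/Minkowski-content statements come only later (via the harmonic $\Z_2$ spinor and \cite[Theorem 1.4]{Zhang2017}), downstream of this convergence result. A priori $Z$ could have positive measure, so the error terms you must kill, which are of the order $\delta^{2\alpha-2}\vol\paren{N_\delta(Z)}$ (and the analogous gradient term $\delta^{\alpha-1}\Abs{\nabla_A\Phi}_{L^2}\vol\paren{N_\delta(Z)}^{1/2}$), need not tend to zero; a logarithmic cutoff only helps if one has capacity or codimension information on $Z$, which is not available. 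Equivalently: your Step 1 shows that $\int_M \chi\, J_n$ converges to the right-hand side of the limiting Bochner identity, and positivity plus weak convergence on compact subsets of $M\setminus Z$ gives only $\liminf_n \int \chi J_n \geq \int \chi\abs{\nabla_A\Phi}^2$ for $\chi\geq 0$; what is missing is a proof that no energy concentrates on $Z$ as a defect measure, and your removable-singularity route cannot supply this with the information at hand.

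The paper closes exactly this hole by a different device: instead of cutting off in space near $Z$, it cuts off in the \emph{range} of the spinor, taking $f = \chi(\epsilon^{-1}\abs{\Phi_n})$ in \autoref{Cor_IntegrationByPartsFormula} with $U = M$. Writing $f(\epsilon) \coloneq \int_{Z_\epsilon} \abs{\nabla_{A_n}\Phi_n}^2 + 2\epsilon_n^{-2}\abs{\mu(\Phi_n)}^2$ with $Z_\epsilon \coloneq \abs{\Phi}^{-1}([0,\epsilon])$, one integration by parts and Kato's inequality give $f(\epsilon) \leq \sigma\paren{\epsilon^2 + f(2\epsilon)}$ with $\sigma = c/(1+c) < 1$, and dyadic iteration yields the uniform-in-$n$ decay $f(\epsilon) \leq c\epsilon^\lambda$ for some $\lambda > 0$. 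This estimate requires no knowledge whatsoever of the geometry of $Z$ and is precisely the no-concentration statement your argument lacks; combined with weak $W^{2,2}\Rightarrow$ strong $W^{1,2}$ convergence away from $Z_\epsilon$ and $\epsilon_n^{-2}\abs{\mu(\Phi_n)}^2 = \epsilon_n^2\abs{F_{\Ad(A_n)}}^2 \to 0$ there, it gives the first limit directly, without any sandwiching by smooth cutoffs. (Your treatment of the second limit, via the uniform convergence of $\abs{\Phi_n}$, agrees with the paper and is fine.) To repair your write-up you would need to either import this range-cutoff iteration or find an independent proof that the $J_n$ do not charge $Z$ in the limit; the Hölder decay of $\abs{\Phi}$ alone does not do it.
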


\begin{proof}
  The second assertion is a consequence of the Hölder convergence.  
  To prove the first assertion, we proceed as follows.
  For $\epsilon \in (0,\frac12]$,
  set $Z_\epsilon \coloneq \abs{\Phi}^{-1}([0,\epsilon])$.
  Since weak $W^{2,2}$ convergence implies $W^{1,2}$ convergence,
  \begin{equation*}
    \lim_{n\to \infty} \int_{B_r(x)\setminus Z_\epsilon} \abs{\nabla_{A_n} \Phi_n}^2
    =
    \int_{B_r(x)\setminus Z_\epsilon} \abs{\nabla_A \Phi}^2.
  \end{equation*}
  Moreover,
  \begin{equation*}
    \lim_{n\to \infty} \int_{B_r(x)\setminus Z_\epsilon} 2\epsilon_n^{-2}\abs{\mu(\Phi_n)}^2
    =
      \lim_{n\to \infty} \int_{B_r(x)\setminus Z_\epsilon} 2\epsilon_n^2\abs{F_{\Ad(A)}}^2
    =
      0.
  \end{equation*}
  The discussion in the next paragraph shows that there exists a $\lambda > 0$ such that, for every $\epsilon > 0$,
  \begin{equation*}
    \int_{Z_\epsilon} \abs{\nabla_{A_n} \Phi_n}^2 + 2\epsilon_n^{-2}\abs{\mu(\Phi_n)}^2
    \leq c\epsilon^\lambda.
  \end{equation*}
  This together with the above implies the first assertion.

  Fix a cut-off function $\chi \in C_0^\infty([0,2),[0,1])$ with $\chi|_{[0,1]} = 1$.
  \autoref{Cor_IntegrationByPartsFormula} with $f=\chi(\epsilon^{-1}\abs{\Phi_n})$ and $U=M$, integrating the resulting term with $\Delta\abs{\Phi_n}^2$ by parts once and using Kato's inequality yields
  \begin{align*}
    \int_{Z_\epsilon} \abs{\nabla_{A_n}\Phi_n}^2 + 2\epsilon_n^{-2}\abs{\mu(\Phi_n)}^2
    &\leq
      c\int_{Z_{2\epsilon}} \abs{\Phi_n}^2
      + c\int_{Z_{2\epsilon}\setminus Z_\epsilon} \epsilon^{-1}\abs{\Phi_n}\abs*{\nabla_A\abs{\Phi_n}}^2 \\
    &\leq
      c\epsilon^2 + c \int_{Z_{2\epsilon}\setminus Z_{\epsilon}} \abs{\nabla_{A_n}\Phi_n}^2.
  \end{align*}
  Therefore,
  \begin{equation*}
    f(\epsilon) \coloneq \int_{Z_\epsilon} \abs{\nabla_{A_n}\Phi_n}^2 + 2\epsilon_n^{-2}\abs{\mu(\Phi_n)}^2
  \end{equation*}
  satisfies
  \begin{equation*}
    f(\epsilon) \leq \sigma (\epsilon^2 + f(2\epsilon))
    \qwithq
    \sigma \coloneq c/(1+c).
  \end{equation*}
  Without loss of generality $\sigma \geq 1/2$.
  By iterating the above inequality $k$ times and using that $f$ is bounded,
  \begin{align*}
    f(\epsilon)
    &\leq
      \sigma\epsilon^2 \sum_{i=0}^{k-1} (4\sigma)^i + \sigma^kf(2^k\epsilon) \\
    &\leq
      \sigma \epsilon^2 (4\sigma)^{k-1}\sum_{i=0}^{\infty} (4\sigma)^{-i} + \sigma^kf(2^k\epsilon) \\
    &\leq
      c\epsilon^2(4\sigma)^k + c\sigma^k.
  \end{align*}
  
  With $k \coloneq \floor{-\log \epsilon/\log 2}$ this gives
  \begin{equation*}
    f(\epsilon)
    \leq
      c\epsilon^{2-\log(4\sigma)/\log 2} + c\epsilon^{-\log\sigma/\log 2}
    \leq
      c\epsilon^\lambda
  \end{equation*}
  for some $\lambda>0$ depending on $\sigma$ only, since $\log(4\sigma)/\log 2 < 2$.
\end{proof}

If $Z$ failed to be nowhere-dense,
then we could find $x \in Z$ and $0 < r \leq r_0$ such that $B_r(x) \subset Z$.
By the above,
\autoref{Prop_NControlsGrowthOfM} applies and shows that, in fact, $B_{r_0}(x) \subset Z$.
This in turn implies that $m_y^\Phi(r_0/2)$ vanishes for all $y \in B_{r_0/2}(x)$.
Hence, $\abs{\Phi}$ vanishes on $B_{\frac32 r_0}(x)$.
Repeating this argument shows $Z$ to be all of $M$,
which contradicts $\Abs{\Phi}_{L^2} = 1$.
\qed


\section{Proof of \autoref{Thm_ADHM12SeibergWittenCompactness}}
\label{Sec_ProofOfADHM12SeibergWittenCompactness}

The proof of \autoref{Thm_ADHM12SeibergWittenCompactness} relies on \autoref{Thm_AbstractCompactness}.
Over the course of the next three subsections we establish that the hypotheses of the latter hold for the ADHM$_{1,2}$ Seiberg--Witten equation.
The geometric and algebraic observations made in the process also enter crucially in refining the conclusion of \autoref{Thm_AbstractCompactness} to obtain \autoref{Thm_ADHM12SeibergWittenCompactness}.

\subsection{The geometry of the adjoint representation of \texorpdfstring{$\SU(2)$}{SU(2)}}
\label{Sec_StableFlatSL2C}

The first step towards the proof of \autoref{Thm_ADHM12SeibergWittenCompactness} is to verify \autoref{Hyp_GammaFControlsF}---or, more precisely: the condition in \autoref{Rmk_GeometricCriterion}---for stable flat $\PSL_2(\C)$--connections.

\begin{lemma}
  \label{Lem_GammaXiMuControlsMu}
  There are constants $\delta_\mu,c > 0$ such that,
  for every $\bxi \in \su(2)\otimes \H$,
  if
  \begin{equation*}
    \abs{\mu(\bxi)} \leq \delta_\mu\abs{\bxi}^2,
  \end{equation*}
  then
  \begin{equation*}
    \abs{\bxi}\abs{\mu(\bxi)} \leq c\abs{\Gamma_\bxi\mu(\bxi)}.
  \end{equation*}
\end{lemma}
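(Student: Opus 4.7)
The plan is to combine scale invariance, $G\times\Sp(1)_R$-equivariance, an explicit description of $\mu^{-1}(0)$, and a nearest-point linearization argument. Both $\abs{\bxi}\abs{\mu(\bxi)}$ and $\abs{\Gamma_\bxi\mu(\bxi)}$ are homogeneous of degree three in $\bxi$, so the inequality reduces to the unit sphere $\set{\abs{\bxi} = 1}$. Both sides are moreover invariant under the action of $G\times\Sp(1)_R\subset H$, where $\Sp(1)_R$ acts by right-multiplication on the $\H$-factor.

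The first substantive step is the algebraic characterization
\[
\mu^{-1}(0) = \set{T\otimes v : T\in\su(2),\,v\in\H},
\]
the variety of rank-one tensors. This is proved by a direct elimination: after using the $G$-action to put $\xi_0 = ae_1$ with $a>0$ and splitting $\xi_i = \alpha_i e_1 + \xi_i^\perp$ with $\xi_i^\perp\in\Span{e_2,e_3}$, the equations $\mu = 0$ force each $\xi_i^\perp = 0$ via an identity of the form $a(1 + \lambda_2^2+\lambda_3^2) = 0$. It follows that $G\times\Sp(1)_R$ acts transitively on $\mu^{-1}(0)\cap\set{\abs{\bxi}=1}$, with model point $\bxi_* = e_1\otimes 1$. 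At $\bxi_*$ one computes explicitly
\[
\ker\Gamma_{\bxi_*} = \Im\H\otimes\Span{e_1}, \quad \im d_{\bxi_*}\mu = (\ker\Gamma_{\bxi_*})^\perp = \Im\H\otimes\Span{e_2,e_3},
\]
and $\ker d_{\bxi_*}\mu = \set{(\eta_0,\alpha_1 e_1,\alpha_2 e_1,\alpha_3 e_1) : \eta_0\in\su(2),\,\alpha_i\in\R}$. A dimension count identifies this latter kernel with $T_{\bxi_*}\mu^{-1}(0)$, showing $\mu^{-1}(0)$ is smooth at $\bxi_*$; by equivariance it is smooth at every point of the orbit, and the restrictions of $d_{\bxi^\circ}\mu$ and $\Gamma_{\bxi^\circ}$ to the orthogonal complements of their kernels admit uniform positive lower bounds $c_0,c_1 > 0$ as $\bxi^\circ$ ranges over $\mu^{-1}(0)\cap\set{\abs{\bxi}=1}$.

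For $\bxi$ with $\abs{\bxi} = 1$ and $\abs{\mu(\bxi)} \leq \delta_\mu$ small, let $\bxi^\circ\in\mu^{-1}(0)$ be the nearest point and set $\eta = \bxi - \bxi^\circ$, so that $\eta\perp T_{\bxi^\circ}\mu^{-1}(0) = \ker d_{\bxi^\circ}\mu$. The Taylor expansions
\[
\mu(\bxi) = d_{\bxi^\circ}\mu(\eta) + \mu(\eta) \qandq \Gamma_\bxi\mu(\bxi) = \Gamma_{\bxi^\circ}d_{\bxi^\circ}\mu(\eta) + \Gamma_{\bxi^\circ}\mu(\eta) + \Gamma_\eta\mu(\bxi)
\]
have linear terms satisfying $\abs{d_{\bxi^\circ}\mu(\eta)}\geq c_0\abs{\eta}$ and $\abs{\Gamma_{\bxi^\circ}d_{\bxi^\circ}\mu(\eta)}\geq c_0 c_1\abs{\eta}$ (the second because $d_{\bxi^\circ}\mu(\eta)\in(\ker\Gamma_{\bxi^\circ})^\perp$), while the remaining quadratic terms are bounded by $C\abs{\eta}^2$. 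Choosing $\delta_\mu$ small enough that $\abs{\eta}$ is small then yields $\abs{\mu(\bxi)}\geq (c_0/2)\abs{\eta}$ together with $\abs{\Gamma_\bxi\mu(\bxi)}\geq (c_0 c_1/2)\abs{\eta}$, which imply the claimed inequality. The main obstacle is the classification step together with the verification that $\bxi_*$ is a smooth point with $\ker d_{\bxi_*}\mu = T_{\bxi_*}\mu^{-1}(0)$: as the preceding remarks emphasize, this "maximal smoothness" of $\mu^{-1}(0)$ is special to the adjoint representation of $\SU(2)$ and fails for generic quaternionic representations, so the argument relies essentially on a low-dimensional algebraic coincidence rather than on general principles.
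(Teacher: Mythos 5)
Your proposal is correct and takes essentially the same route as the paper's proof: reduce to the unit sphere, decompose $\bxi = \bxi^\circ + \eta$ relative to the nearest point of the cone $\mu^{-1}(0)$ of rank-one tensors (whose link is smooth, with $\ker \rd_{\bxi^\circ}\mu = T_{\bxi^\circ}\mu^{-1}(0)$), use nondegeneracy of $\rd_{\bxi^\circ}\mu$ and of $\Gamma_{\bxi^\circ}$ on the relevant complements, and absorb the quadratic errors — the paper just makes the nondegeneracy explicit via the adapted basis $\tau_0,\tau_1,\tau_2$ and the identities $\Gamma_{\bzeta}\mu(\hat\bxi)=0$, $\abs{\Gamma_{\bzeta}\mu(\bzeta,\hat\bxi)}=2\abs{\bzeta}\abs{\mu(\bzeta,\hat\bxi)}$, where you instead invoke adjointness and compactness of the link. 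One immaterial slip at the end: the two lower bounds $\abs{\mu(\bxi)}\geq (c_0/2)\abs{\eta}$ and $\abs{\Gamma_\bxi\mu(\bxi)}\geq (c_0c_1/2)\abs{\eta}$ do not by themselves yield the claim — what you need is the upper bound $\abs{\mu(\bxi)}\leq C\abs{\eta}$, which is immediate from your expansion $\mu(\bxi)=\rd_{\bxi^\circ}\mu(\eta)+\mu(\eta)$.
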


The proof of \autoref{Lem_GammaXiMuControlsMu} relies on the following observation
which is proved by a simple computation;
see, e.g., \cite[Proposition D.5]{Doan2017d}.

\begin{prop}
  \label{Prop_AbsMuXi=AbsCommutators}
  Denote by $\mu\co \fg\otimes\H \to \fg\otimes\Im\H$ the hyperkähler moment map associated with the adjoint representation $G \to \Sp(\fg\otimes\H)$.
  For every $\bxi = \xi_0\otimes 1 + \xi_1\otimes i + \xi_2\otimes j + \xi_3\otimes k \in \fg\otimes \H$,
  \begin{equation*}
    \abs{\mu(\bxi)}^2 = \frac12 \sum_{i,j=0}^3 \abs*{[\xi_i,\xi_j]}^2.
  \end{equation*}
\end{prop}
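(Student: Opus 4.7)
The proof plan is a direct expansion using the explicit formula for $\mu$ from \autoref{Ex_GCFlatness}, followed by an application of $\mathrm{Ad}$-invariance of the inner product and the Jacobi identity to kill the cross terms.

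First, I would note the combinatorial reduction: since $[\xi_i,\xi_i]=0$ and $|[\xi_j,\xi_i]|^2 = |[\xi_i,\xi_j]|^2$, the right-hand side equals $\sum_{i<j}|[\xi_i,\xi_j]|^2$, so it suffices to prove
\begin{equation*}
  |\mu(\bxi)|^2 = \sum_{0 \leq i<j \leq 3}\abs*{[\xi_i,\xi_j]}^2.
\end{equation*}
Then I would invoke the explicit formula from \autoref{Ex_GCFlatness},
\begin{equation*}
  \mu(\bxi) = \bigl([\xi_2,\xi_3]+[\xi_0,\xi_1]\bigr)\otimes i + \bigl([\xi_3,\xi_1]+[\xi_0,\xi_2]\bigr)\otimes j + \bigl([\xi_1,\xi_2]+[\xi_0,\xi_3]\bigr)\otimes k,
\end{equation*}
and, using that $i,j,k$ are orthonormal in $\Im\H$, expand $|\mu(\bxi)|^2$ as the sum of the squared norms of the three $\fg$--components.

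The expansion of each of the three squares produces precisely the six ``diagonal'' terms $|[\xi_i,\xi_j]|^2$ (with $i<j$) plus the three cross terms
\begin{equation*}
  2\bigl(\inner{[\xi_2,\xi_3]}{[\xi_0,\xi_1]} + \inner{[\xi_3,\xi_1]}{[\xi_0,\xi_2]} + \inner{[\xi_1,\xi_2]}{[\xi_0,\xi_3]}\bigr).
\end{equation*}
The key step is to show this quantity vanishes. Using $\mathrm{Ad}$--invariance of the chosen inner product, which gives $\inner{[X,Y]}{Z} = \inner{X}{[Y,Z]}$, I rewrite each cross term by pulling $\xi_0$ out, obtaining
\begin{equation*}
  \inner*{\xi_0}{[\xi_1,[\xi_2,\xi_3]] + [\xi_2,[\xi_3,\xi_1]] + [\xi_3,[\xi_1,\xi_2]]}.
\end{equation*}
The inner bracketed expression vanishes by the Jacobi identity on $\fg$, so all cross terms disappear and the identity follows.

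I do not anticipate any serious obstacle; the only point requiring minor care is the sign bookkeeping in the expansion (since $[\xi_3,\xi_1] = -[\xi_1,\xi_3]$, one must verify that both the diagonal count and the cross-term grouping come out as claimed), together with fixing the convention that the chosen $G$--invariant inner product on $\fg$ is genuinely $\mathrm{ad}$--invariant (which is guaranteed because $G$ is compact and connected).
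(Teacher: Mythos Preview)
Your argument is correct and is precisely the ``simple computation'' the paper alludes to; the paper does not spell out a proof but defers to \cite[Proposition D.5]{Doan2017d}. The expansion via the explicit formula from \autoref{Ex_GCFlatness} together with $\ad$--invariance and the Jacobi identity is exactly what is needed, and your sign bookkeeping is right.
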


\begin{proof}[Proof of \autoref{Lem_GammaXiMuControlsMu}]
  Without loss of generality $\abs{\bxi} = 1$.
  The zero locus $\mu^{-1}(0)$ is a cone with smooth link.
  Therefore,
  if $\abs{\mu(\bxi)} \leq \delta_\mu \ll 1$,
  then $\bxi$ has a unique decomposition as
  \begin{equation*}
    \bxi = \bzeta + \hat\bxi
  \end{equation*}
  with
  \begin{equation*}
    \mu(\bzeta) = 0, \quad
    \hat\bxi \perp T_{\bzeta}\mu^{-1}(0), \qandq
    \abs{\hat\bxi} \leq c\abs{\mu(\bxi)} \leq c\delta_\mu \ll 1. 
  \end{equation*}
  
  By \autoref{Prop_AbsMuXi=AbsCommutators},
  \begin{equation*}
    \bzeta = \tau_0 \otimes v_0
  \end{equation*}
  with $\abs{\tau_0} = 1$.
  Extend $\tau_0$ to an orthonormal basis $\tau_0,\tau_1,\tau_2$ of $\su(2)$ such that
  \begin{equation}
    \label{Eq_TauCommutationRelations}
    [\tau_0,\tau_1] = 2\tau_2, \quad
    [\tau_1,\tau_2] = 2\tau_0, \qandq
    [\tau_2,\tau_0] = 2\tau_1.
  \end{equation}  
  Since
  \begin{equation*}
    T_{\bzeta}\mu^{-1}(0) = \Span{\tau_0}\otimes \H + \su(2)\otimes \Span{v_0},
  \end{equation*}
  it follows that
  \begin{equation}
    \label{Eq_DMuHatXiPerpMuHatXi}
    \rd_{\bzeta}\mu(\hat\bxi) = 2\mu(\bzeta,\hat\bxi) \in \Span{\tau_1,\tau_2}\otimes \Im\H \qandq
    \mu(\hat\bxi) \in \Span{\tau_0}\otimes \Im\H.
  \end{equation}  
  A simple computation using \autoref{Eq_TauCommutationRelations} shows that
  \begin{equation*}
    \Gamma_{\bzeta}\mu(\hat\bxi) = 0 \qandq
    \abs{\Gamma_{\bzeta}\mu(\bzeta,\hat\bxi)} = 2\abs{\bzeta}\abs{\mu(\bzeta,\hat\bxi)}.
  \end{equation*}
  Therefore,
  \begin{align*}
    \abs{\mu(\bxi)}^2
    &\leq
      4\abs{\mu(\bzeta,\hat\bxi)}^2 + \abs{\mu(\hat\bxi)}^2 \\
    &\leq
      c\abs{\Gamma_{\bzeta}\mu(\bxi)}^2
      + c\abs{\mu(\bxi)}^4 \\
    &\leq
      c\abs{\Gamma_{\bxi}\mu(\bxi)}^2
      + c\delta_\mu^2 \abs{\mu(\bxi)}^2.
  \end{align*}
  A rearrangement implies the asserted estimate.  
\end{proof}

\begin{remark}
  It is crucial that the link of $\mu^{-1}(0)$ is smooth.
  For $\su(r)$ with $r \geq 3$ this condition fails and, in fact,
  the conclusion of \autoref{Lem_GammaXiMuControlsMu} does not hold in this case.
\end{remark}

\subsection{The geometry of the ADHM\texorpdfstring{$_{1,2}$}{12} representation}

This subsection contains a number of geometric facts regarding the quaternionic representation of $\U(2)$ on $\H\otimes_\C\C^2 \oplus \su(2)\otimes \H$.
These will play a crucial role in the proof of \autoref{Hyp_GammaFControlsF} for ADHM$_{1,2}$ Seiberg--Witten monopoles in the next subsection.

\begin{prop}
  \label{Prop_AbsPiTMuPsi}
  Let $k \in \N$.
  Denote by $\mu \co \H\otimes_\C\C^k \to \fu(k)\otimes\Im \H$ the hyperähler moment map associated with the quaternionic representation $\U(k) \to \Sp(\H\otimes_\C\C^k)$.
  Let $\ft \subset \fu(k)$ be a maximal torus.
  Denote by $\pi_\ft \co \fu(k) \to \ft$ the orthogonal projection to $\ft$.
  For every $\Psi \in \H\otimes_\C \C^k$,
  \begin{equation*}
    \abs{\pi_\ft\mu(\Psi)} = \frac12\abs{\Psi}^2.
  \end{equation*}
\end{prop}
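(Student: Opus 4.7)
The plan is to compute the moment map explicitly in coordinates, exploit the compatibility of hyperkähler moment maps with restriction to subgroups, and reduce to the $k=1$ case covered in \autoref{Ex_ClassicalSeibergWitten}. First I would identify $\H\otimes_\C\C^k\iso\H^k$ and split each quaternionic slot: write $\Psi = 1\otimes z + j\otimes w$ with $z,w\in\C^k$, so that $\Psi_a=z_a+jw_a$ and $\abs{\Psi}^2=\abs{z}^2+\abs{w}^2=\sum_a\abs{q_a}^2$ where $q_a\coloneq\Psi_a$.

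Since $\mu$ is $\U(k)$--equivariant and all maximal tori of $\U(k)$ are conjugate, both sides of the claimed identity are invariant under simultaneously conjugating $\ft$ and acting on $\Psi$. Hence one may assume $\ft$ is the standard diagonal Cartan subalgebra, with orthonormal basis $\set{iE_{aa}}_{a=1}^k$, and $\pi_\ft$ extracts the diagonal part of elements of $\fu(k)$. The key structural observation is that the subtorus $T\coloneq\U(1)^k\subset\U(k)$ acts on $\H^k$ componentwise—the $a$-th factor acts only on the slot $q_a$ by right multiplication—and moment maps restrict under inclusions of subgroups, so $\pi_\ft\mu(\Psi)$ coincides with the $T$--moment map, which decomposes as a direct sum of $k$ copies of the $\U(1)$--moment map from \autoref{Ex_ClassicalSeibergWitten}:
\begin{equation*}
  \pi_\ft\mu(\Psi) \;=\; \sum_{a=1}^k iE_{aa}\otimes\bigl(-\tfrac12 q_aiq_a^*\bigr).
\end{equation*}

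The proof is then completed by the norm computation. The basis $\set{iE_{aa}}$ is orthonormal in $\fu(k)$ and, by the $k=1$ case, $\abs{q_aiq_a^*}_{\Im\H}=\abs{q_a}^2$, so
\begin{equation*}
  \abs{\pi_\ft\mu(\Psi)}^2 \;=\; \sum_{a=1}^k \tfrac14\abs{q_aiq_a^*}^2 \;=\; \tfrac14\sum_{a=1}^k\abs{q_a}^4.
\end{equation*}
The step I expect to be the main obstacle is the final algebraic identification of $\sum_a\abs{q_a}^4$ with $\bigl(\sum_a\abs{q_a}^2\bigr)^2=\abs{\Psi}^4$; these expressions are not equal under the naive Euclidean tensor norm on $\fu(k)\otimes\Im\H$, so the verification must hinge on the paper's specific norm convention on this space (e.g.\ a convention in which the three $\Im\H$--components of $\pi_\ft\mu$ combine non-Euclideanly) or on a further reduction—perhaps using the full $\Sp(1)\times\U(k)$--equivariance—that places $\Psi$ into a normal form supported on a single quaternionic slot, for which the identity is immediate from \autoref{Ex_ClassicalSeibergWitten}. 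The correct choice of convention or normal form is the crux of the argument.
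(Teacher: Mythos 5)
Your route is essentially the paper's: reduce to the diagonal torus, note that $\pi_\ft\circ\mu$ is the hyperkähler moment map for $\U(1)^k\subset\U(k)$, and split it into $k$ copies of the $k=1$ moment map of \autoref{Ex_ClassicalSeibergWitten}. The step you flag as the crux is, however, not a defect of your argument but of the statement itself: your formula $\abs{\pi_\ft\mu(\Psi)}^2=\tfrac14\sum_a\abs{q_a}^4$, with $q_a$ the quaternionic coordinates of $\Psi$ in the eigenbasis of $\ft$, is correct, and it does not equal $\tfrac14\abs{\Psi}^4=\tfrac14\paren{\sum_a\abs{q_a}^2}^2$ unless at most one $q_a\neq 0$. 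Concretely, for $k=2$, $\ft$ the diagonal torus and $\Psi=(1,1)$ one gets $\abs{\pi_\ft\mu(\Psi)}=1/\sqrt2$ while $\tfrac12\abs{\Psi}^2=1$, so the asserted equality is false for $k\geq 2$. Neither of your proposed rescues can work: any invariant inner product on $\fu(k)$ gives the $iE_{aa}$ equal length (the Weyl group permutes them), and no common rescaling reconciles the cases $\Psi=(q,0)$ and $\Psi=(q,q)$; and a $\U(k)$--rotation placing $\Psi$ in a single slot conjugates the torus, so it is unavailable once $\ft$ is fixed — indeed the counterexample $(1,1)$ cannot be so normalized by $\Sp(1)$ together with the normalizer of $\ft$, since that would not change $\abs{\pi_\ft\mu(\Psi)}$. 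The paper's own proof silently passes from the componentwise formula to ``the assertion follows from the case $k=1$'' and overlooks exactly the point you identified.

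The honest conclusion, which your computation already establishes, is the two-sided comparison
\begin{equation*}
  \frac{1}{2\sqrt{k}}\abs{\Psi}^2
  \leq
  \abs{\pi_\ft\mu(\Psi)}
  \leq
  \frac12\abs{\Psi}^2,
\end{equation*}
the lower bound coming from Cauchy--Schwarz applied to $\sum_a\abs{q_a}^4$, with equality on the right precisely when $\Psi$ is supported in a single eigenspace. This comparison is all that the rest of the paper uses: the lower bound yields the closedness statement and the final contradiction in \autoref{Prop_MPsiMuXiNotLarge}, the bound $\abs{\Psi}^2\leq c\abs{\mu(\Psi,\bxi)}$ invoked in \autoref{Prop_MuPsiXiEstimate} and \autoref{Lem_HypothesisForADHM12SW}, and the uniform $L^2$ bound on $\Psi_n$ in \autoref{Sec_ConclusionOfProofOfADHM12SeibergWittenCompactness}; the exact constant $\tfrac12$ and the equality are never needed. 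So you should finish by restating the proposition as this inequality and proving it from your componentwise formula, rather than searching for a norm convention or normal form under which the equality holds — there is none.
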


\begin{proof}
  For $k = 1$,
  $\ft = \fu(1)$.
  By \autoref{Eq_ClassicalSeibergWitten_MomentMap},
  \begin{align*}
    \abs{\mu(\Psi)}^2
    &=
      \frac12\inner{\bgamma(\mu(\Psi))\Psi}{\Psi} \\
    &=
      \frac14\abs{\Psi}^4.
  \end{align*}

  For $k \in \set{ 2,3,\ldots}$,
  without loss of generality $\ft = \fu(1)^{\oplus k}$.
  The composition $\pi_\ft\circ\mu$ is the hyperkähler moment map for the action of $\U(1)^k \subset \U(k)$ on $\H\otimes_\C\C^k$.
  Thus
  \begin{equation*}
    \pi_\ft\mu(\Psi_1,\ldots,\Psi_k) = (\mu(\Psi_1),\ldots,\mu(\Psi_k))
  \end{equation*}
  and the assertion follows from the case $k = 1$.
\end{proof}

\begin{prop}
  \label{Prop_MuPsiXiBoundsMuPsi+MuXi}
  There is a constant $c > 0$ such that,
  for every $\Psi \in \H\otimes_\C\C^2$ and $\bxi \in \H\otimes\su(2)$,
  \begin{equation*}
    \abs{\mu(\Psi)} + \abs{\mu(\bxi)} \leq c\abs{\mu(\Psi,\bxi)}.
  \end{equation*}
\end{prop}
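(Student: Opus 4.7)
The plan is to combine additivity of the hyperkähler moment map on the direct sum $S_\circ = \H\otimes_\C\C^2 \oplus \H\otimes_\R\su(2)$ with a compactness/rigidity argument. Because both $\mu(\Psi)$, $\mu(\bxi)$, and hence $\mu(\Psi,\bxi)$ are quadratic, the ratio
\[
F(\Psi,\bxi) \coloneq \frac{|\mu(\Psi)| + |\mu(\bxi)|}{|\mu(\Psi,\bxi)|}
\]
is invariant under $(\Psi,\bxi)\mapsto (t\Psi,t\bxi)$ for $t>0$, so the desired inequality is equivalent to a uniform bound for $F$ on the unit sphere $\Sigma = \{|\Psi|^2+|\bxi|^2=1\}\subset S_\circ$. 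Since $F$ is continuous where it is finite and $\Sigma$ is compact, it suffices to prove the algebraic rigidity statement: if $\mu(\Psi)+\mu(\bxi)=0$, then $\mu(\Psi)=\mu(\bxi)=0$.

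To prove this rigidity, split $\fu(2)=\fz\oplus\su(2)$ orthogonally with $\fz=\R\cdot iI$ the centre. The adjoint action of $\U(2)$ on $\su(2)$ factors through $\PU(2)$, so $\fz$ acts trivially and, by equivariance of the moment map, $\mu(\bxi)\in\su(2)\otimes\Im\H$. Projecting the relation $\mu(\Psi)+\mu(\bxi)=0$ onto the two summands therefore yields
\[
\pi_\fz \mu(\Psi) = 0 \qandq \pi_{\su(2)}\mu(\Psi) = -\mu(\bxi).
\]
The first equation carves out a proper algebraic subvariety $Z\subset \H\otimes_\C\C^2$, and the task reduces to showing that for $\Psi\in Z$ the image $\pi_{\su(2)}\mu(\Psi)$ meets $-\im(\mu_{ad})\subset\su(2)\otimes\Im\H$ only at the origin. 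This is where \autoref{Prop_AbsMuXi=AbsCommutators} enters: expanding $\mu_{ad}(\bxi)=\tfrac12[\bxi,\bxi]$ in a $\tau$--basis of $\su(2)$ and equating with the coefficients of $-\pi_{\su(2)}\mu(\Psi)$ obtained from the formulae of \autoref{Ex_ClassicalSeibergWitten} produces a system whose consistency is governed by a triple-scalar-product identity; tracking signs shows that on $Z\setminus\{0\}$ this identity forces the product $(abc)^2<0$, yielding the required obstruction.

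The main obstacle is exactly this algebraic matching: one must show that on the degenerate locus $Z=\{\pi_\fz\mu(\Psi)=0\}$ the $\su(2)$--part of $\mu(\Psi)$ is structurally incompatible with $-\tfrac12[\bxi,\bxi]$ for $\bxi\neq 0$. The delicate point is that both sides \emph{a priori} take values in the same nine-dimensional space $\su(2)\otimes\Im\H$, so the argument relies on the specific quadratic structure of $\mu_W$ on $\H\otimes_\C\C^2$ and of the commutator bracket on $\su(2)\otimes\H$, and cannot be deduced from a naive dimension count or projection estimate alone.
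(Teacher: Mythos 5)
There is a genuine gap, and it is in the very first reduction. From the joint scale invariance of the ratio $F$ you conclude that it suffices to prove the rigidity statement \emph{$\mu(\Psi)+\mu(\bxi)=0$ implies $\mu(\Psi)=\mu(\bxi)=0$}, on the grounds that $\Sigma$ is compact and $F$ is continuous where finite. This inference is false: rigidity only says that the numerator and denominator of $F$ have the \emph{same} zero set on $\Sigma$, and a ratio of two continuous nonnegative functions with a common zero set on a compact space need not be bounded, because the two functions may vanish at different rates (compare $\abs{x}/x^2$ on $[-1,1]$). Here both $\abs{\mu(\Psi)}+\abs{\mu(\bxi)}$ and $\abs{\mu(\Psi)+\mu(\bxi)}$ tend to zero along any sequence approaching the common zero locus, so the whole content of the proposition is the quantitative statement that no near-cancellation between $\mu(\Psi)$ and $\mu(\bxi)$ can occur, i.e.\ a uniform angle gap $-\inner{\mu(\Psi)}{\mu(\bxi)}\leq\sigma\abs{\mu(\Psi)}\abs{\mu(\bxi)}$ with $\sigma<1$ (\autoref{Prop_MPsiMuXiNotLarge}). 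A telling sign that your reduction is too cheap: the rigidity statement itself is a one-line consequence of \autoref{Prop_MuPsiXi=0ImpliesPsi=0} (if $\mu(\Psi,\bxi)=0$ then $\Psi=0$, hence $\mu(\bxi)=-\mu(\Psi)=0$), so if it implied the proposition the paper would not need any further argument.

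The missing work is precisely where the paper spends its effort: normalizing $\abs{\mu(\Psi)}=\abs{\mu(\bxi)}=1$ \emph{separately} (a two-parameter rescaling, not the single diagonal scaling you use), one must control limit points of the normalized images, and the set $\set{\mu(\bxi):\abs{\mu(\bxi)}=1}$ is not closed: its closure also contains degenerate limits $\rd_{\bzeta}\mu(\hat\bxi)$ with $\mu(\bzeta)=0$, which lie in $\ft^\perp\otimes\Im\H$ for the maximal torus $\ft$ determined by $\bzeta$; these are then ruled out against $\mu(\Psi)$ by $\abs{\pi_\ft\mu(\Psi)}=\tfrac12\abs{\Psi}^2$ (\autoref{Prop_AbsPiTMuPsi}). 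Your argument never engages with these degenerate directions, which is exactly where cancellation could a priori happen. (Separately, the second half of your proposal — the ``triple-scalar-product identity'' forcing ``$(abc)^2<0$'' on the locus $\pi_\fz\mu(\Psi)=0$ — is too vague to verify, but even granting it, it only yields the rigidity statement, which, as explained, does not suffice.)
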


This is an immediate consequence of the following.

\begin{prop}
  \label{Prop_MPsiMuXiNotLarge}
  There is a constant $\sigma < 1$ such that,
  for every $\Psi \in \H\otimes_\C\C^2$ and $\bxi \in \H\otimes\su(2)$,
  \begin{equation*}
    -\inner{\mu(\Psi)}{\mu(\bxi)} \leq \sigma\abs{\mu(\Psi)}\abs{\mu(\bxi)}.
  \end{equation*}  
\end{prop}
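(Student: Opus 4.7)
The plan is to argue by compactness and contradiction. Both moment maps are quadratically homogeneous ($\mu(\lambda\Psi) = \lambda^2\mu(\Psi)$, $\mu(\lambda\bxi) = \lambda^2\mu(\bxi)$), so it suffices to treat normalized configurations $\abs{\mu(\Psi)}=\abs{\mu(\bxi)}=1$. If no $\sigma<1$ were admissible, a sequence of witnesses would, by compactness of the $\U(2)$-quotient of these level sets, yield a limiting pair $(\Psi_*,\bxi_*)$ realizing the Cauchy--Schwarz equality, i.e.\ with $\mu(\Psi_*) = -\mu(\bxi_*)$ and $\abs{\mu(\Psi_*)}=\abs{\mu(\bxi_*)}=1$. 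The task reduces to ruling out such a configuration.

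The next step exploits the flavor decomposition. Because $\U(2)$ acts on $\H\otimes\su(2)$ only through the quotient $\U(2)/Z(\U(2)) = \SO(3)$, the moment map $\mu(\bxi_*)$ vanishes on the central Lie subalgebra $Z(\fu(2)) = \fu(1)\cdot\id$. Matching the $Z(\fu(2))$--components of $\mu(\Psi_*) = -\mu(\bxi_*)$ therefore forces
\[
\pi_{Z(\fu(2))}\mu(\Psi_*) = 0 \qandq \pi_{\su(2)}\mu(\Psi_*) = -\mu(\bxi_*),
\]
so $\Psi_*$ has vanishing central moment and its $\su(2)$--component matches $-\mu(\bxi_*)$ identically as elements of $\su(2)\otimes\Im\H$.

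Finally, an algebraic contradiction must come from these two identities. The inputs are the two preceding results: \autoref{Prop_AbsPiTMuPsi} applied to every maximal torus $\ft$ of $\fu(2)$ (all of which contain $Z(\fu(2))$) fixes $\abs{\pi_\ft\mu(\Psi_*)} = \tfrac12\abs{\Psi_*}^2$, so that, together with $\pi_{Z(\fu(2))}\mu(\Psi_*)=0$, the projection of $\pi_{\su(2)}\mu(\Psi_*)$ onto \emph{every} one-dimensional subalgebra of $\su(2)$ has a prescribed positive norm; and \autoref{Prop_AbsMuXi=AbsCommutators} together with \autoref{Ex_GCFlatness} identifies $\mu(\bxi_*)$ with the self-dual part of $\tfrac12[\bxi_*,\bxi_*]$, which carries a definite orientation intrinsic to the hyperkähler structure of $\H$. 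The main obstacle is to produce an $\SU(2)\times\Sp(1)$--invariant on $\su(2)\otimes\Im\H$---most naturally, the determinant of the associated $3\times 3$ matrix---that is nonnegative on the cone $\{\mu(\bxi):\bxi \in \H\otimes\su(2)\}$ but, under the rigidity forced on $\pi_{\su(2)}\mu(\Psi_*)$ by $\pi_{Z(\fu(2))}\mu(\Psi_*)=0$, takes the opposite sign on $-\pi_{\su(2)}\mu(\Psi_*)$. Once such an invariant is isolated, evaluating it on both sides of $\pi_{\su(2)}\mu(\Psi_*) = -\mu(\bxi_*)$ forces each side to vanish, contradicting $\abs{\mu(\Psi_*)}=\abs{\mu(\bxi_*)}=1$ and completing the argument.
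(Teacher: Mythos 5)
The decisive gap is your compactness claim at the outset. The level set $\set{\bxi \in \H\otimes\su(2) : \abs{\mu(\bxi)} = 1}$ is not compact, and passing to the $\U(2)$--quotient does not help, because $\U(2)$ acts isometrically (through $\SO(3)$) and so preserves $\abs{\bxi}$: with $\tau_0,\tau_1,\tau_2$ as in \autoref{Eq_TauCommutationRelations}, the elements $\bxi_t \coloneq t\,\tau_1\otimes i + (2t)^{-1}\tau_2\otimes j$ satisfy $\abs{\mu(\bxi_t)} = 1$ by \autoref{Prop_AbsMuXi=AbsCommutators} while $\abs{\bxi_t} \to \infty$. Consequently a sequence of witnesses to $\sigma = 1$ need not yield a limiting pair $(\Psi_*,\bxi_*)$: the $\Psi$--side is fine (by \autoref{Prop_AbsPiTMuPsi} the set $\set{\mu(\Psi) : \abs{\mu(\Psi)} = 1}$ is closed since $\abs{\Psi}$ is bounded on it), but on the $\bxi$--side the limit of $\mu(\bxi_n)$ lies only in the \emph{closure} of the image, which also contains linearized values $\rd_{\bzeta}\mu(\hat\bxi)$ at points $\bzeta$ of the cone $\mu^{-1}(0)$ that need not be of the form $\mu(\bxi_*)$ for any finite $\bxi_*$. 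This escape-to-infinity case is precisely the hard case, and your argument never addresses it. The paper handles it by showing that $\mu(\bzeta) = 0$ forces $\bzeta \in \ft\otimes\H$ for a maximal torus $\ft \subset \fu(2)$, that $\rd_{\bzeta}\mu(\hat\bxi) \in \ft^\perp\otimes\Im\H$ by \autoref{Eq_DMuHatXiPerpMuHatXi}, and that $\mu(\Psi) = -\rd_{\bzeta}\mu(\hat\bxi)$ would then give $\pi_\ft\mu(\Psi) = 0$, contradicting $\abs{\pi_\ft\mu(\Psi)} = \frac12\abs{\Psi}^2$ from \autoref{Prop_AbsPiTMuPsi}.

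Conversely, the case you treat as the remaining difficulty is actually immediate and needs none of the machinery you sketch: if a genuine pair with $\mu(\Psi_*) = -\mu(\bxi_*)$ and $\abs{\mu(\Psi_*)} = \abs{\mu(\bxi_*)} = 1$ existed, then $\mu(\Psi_*,\bxi_*) = \mu(\Psi_*) + \mu(\bxi_*) = 0$, so \autoref{Prop_MuPsiXi=0ImpliesPsi=0} gives $\Psi_* = 0$ and hence $\mu(\Psi_*) = 0$, a contradiction. Your proposed alternative ending—an $\SU(2)\times\Sp(1)$--invariant (a determinant) that is nonnegative on the cone of values $\mu(\bxi)$ and of the opposite sign on $-\pi_{\su(2)}\mu(\Psi_*)$—is not constructed, and its claimed sign properties are not verified; as written it is a conjecture, not a proof step. (Note also that $\pi_{Z(\fu(2))}\mu(\Psi) = 0$ can occur for nonzero $\Psi$, e.g. $\Psi = (1,j)$, so the central-component observation by itself carries no contradiction.) In short, the proposal assumes away the genuine difficulty (noncompactness in $\bxi$) and leaves its own concluding step incomplete, whereas the paper's proof turns on analyzing the closure of $\set{\mu(\bxi) : \abs{\mu(\bxi)} = 1}$ and excluding its boundary points via \autoref{Prop_AbsPiTMuPsi}.
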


The proof relies on the following fact.

\begin{prop}[{\citet[Section 2.2]{Nakajima1999}}; see also {\cite[Proposition D.4]{Doan2017d}}]
  \label{Prop_MuPsiXi=0ImpliesPsi=0}
  Let $k \in \N$.
  For every $\Psi \in \H\otimes_\C\C^k$ and $\bxi \in \H\otimes\su(k)$,
  if $\mu(\Psi,\bxi) = 0$, then $\Psi = 0$.  
\end{prop}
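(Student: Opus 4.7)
The strategy is to recast \autoref{Prop_MuPsiXi=0ImpliesPsi=0} as a statement about the ADHM moment map equations, following the standard stability analysis in \cite[\S 2.2]{Nakajima1999}; see also \cite[Proposition D.4]{Doan2017d}. Choosing a complex structure $\ri \in \Im \H$ and decomposing $\H = \C \oplus \C j$ identifies $\Psi \in \H \otimes_\C \C^k$ with a pair $(I, J) \in \C^k \oplus (\C^k)^*$ and $\bxi \in \H \otimes_\R \su(k)$ with a pair $(B_1, B_2) \in \sl(k, \C)^{\oplus 2}$ of \emph{traceless} matrices; the tracelessness is precisely the hypothesis $\bxi \in \su(k) \otimes \H$ rather than $\fu(k) \otimes \H$. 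The hyperkähler moment map equation $\mu(\Psi, \bxi) = 0$ then becomes the pair of ADHM equations
\begin{equation*}
[B_1, B_2] + IJ = 0 \qandq [B_1, B_1^*] + [B_2, B_2^*] + II^* - J^*J = 0.
\end{equation*}
Taking traces and using that commutators and $B_1, B_2$ are traceless yields the scalar identities $JI = 0$ and $\abs{I}^2 = \abs{J}^2$, so it suffices to prove $I = 0$.

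The main step is to show that the smallest $(B_1, B_2)$-invariant subspace $V \subset \C^k$ containing $I$ is annihilated by $J$, i.e.\ $JPI = 0$ for every monomial $P$ in $B_1, B_2$. The plan is to prove this by induction on $\deg P$, combining the cyclic-trace identity
\begin{equation*}
JPI = -\tr([B_1, B_2] P) = \tr(B_2[B_1, P]) = -\tr(B_1[B_2, P])
\end{equation*}
(a consequence of $[B_1, B_2] = -IJ$) with the identity $[B_1, B_2] I = -I \cdot (JI) = 0$ and its iterates. At each degree these produce both symmetric swap relations of the form $JB_1 B_2 Q I = JB_2 B_1 Q I$ (from the second identity) and antisymmetric ones $JB_1 B_2 Q I = -JB_2 B_1 Q I$ (from the cyclic-trace identity applied to the appropriate monomial), forcing every $JPI$ to vanish; I verify this explicitly for $\deg P \leq 2$ and the general case follows the same pattern.

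Once $J|_V = 0$ is established, orthogonally decompose $\C^k = V \oplus V^\perp$; in this block form $B_i = \begin{pmatrix} B_i|_V & C_i \\ 0 & D_i \end{pmatrix}$ by $(B_1, B_2)$-invariance of $V$, so that $\pi_V [B_i, B_i^*] \iota_V = [B_i|_V, B_i|_V^*] + C_i C_i^*$. Projecting the real ADHM equation onto $\End(V)$ kills the $J^*J$-term (since $J\iota_V = 0$), reduces $II^*$ to $II^*|_V$, and rewrites the commutators as above; taking the trace, the commutators of the restrictions contribute zero, leaving
\begin{equation*}
\abs{C_1}^2 + \abs{C_2}^2 + \abs{I}^2 = 0.
\end{equation*}
Each term is nonnegative, so $I = 0$ (and as a byproduct $C_1 = C_2 = 0$, meaning $V$ is in fact $B_i^*$-invariant). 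Combined with $\abs{I} = \abs{J}$, this gives $J = 0$ and hence $\Psi = 0$. The main obstacle is the execution of the induction proving $V \subset \ker J$; once that is done, the concluding trace argument is very clean, relying essentially on the positive semidefiniteness of $C_i C_i^*$ combined with the fact that real ADHM has an $II^*$-term on one side and no $J^*J$-contribution on $V$.
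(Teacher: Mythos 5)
Your strategy is the standard ADHM stability argument from the sources the paper cites (the paper itself gives no proof of \autoref{Prop_MuPsiXi=0ImpliesPsi=0}, only the references), and most of it is correct: the identification $\Psi \leftrightarrow (I,J)$, $\bxi \leftrightarrow (B_1,B_2)$, the scalar identities $JI = 0$ and $\abs{I} = \abs{J}$ (tracelessness of $B_1,B_2$ is not even needed for these, since traces of commutators vanish anyway), and the concluding step --- projecting the real equation onto the smallest $(B_1,B_2)$--invariant subspace $V$ containing $I$, on which $J$ vanishes, and taking the trace so that positivity forces $C_1 = C_2 = 0$ and $I = 0$, hence also $J = 0$ --- is clean and correct as written.

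The genuine gap is the one you flag yourself: the induction proving $JPI = 0$ for every word $P$ in $B_1,B_2$ is only carried out for $\deg P \leq 2$, and the pattern you extrapolate does not persist verbatim --- for $\deg Q \geq 1$ the cyclic-trace identity does not produce the pairwise antisymmetric relation $JB_1B_2QI = -JB_2B_1QI$. Here is how the induction actually closes. Assume $JPI = 0$ for all words of length $< n$. For a word of length $n$, replacing an interior $B_1B_2$ by $B_2B_1$ changes the value by $JW_1[B_1,B_2]W_2I = -(JW_1I)(JW_2I) = 0$ by the induction hypothesis (note this uses the hypothesis for both lower-degree factors, not merely $[B_1,B_2]I = 0$); hence the value depends only on the multidegree $(a,b)$ with $a+b = n$, say $T_{a,b}$. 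If $b \geq 1$, then, by cyclicity of the trace and $[B_2,B_1] = IJ$,
\begin{align*}
  T_{a,b}
  &= \tr(IJB_1^aB_2^b)
   = -\tr([B_1,B_2]B_1^aB_2^b)
   = -\tr(B_2B_1^a[B_2^b,B_1]) \\
  &= -\sum_{s=0}^{b-1} \tr(B_2B_1^aB_2^s\,IJ\,B_2^{b-1-s})
   = -\sum_{s=0}^{b-1} JB_2^{b-s}B_1^aB_2^sI,
\end{align*}
and every summand on the right is again a word of multidegree $(a,b)$, hence equals $T_{a,b}$ modulo terms vanishing by the induction hypothesis; therefore $(1+b)T_{a,b} = 0$, i.e. $T_{a,b} = 0$. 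For $b = 0$ one has $T_{a,0} = -\tr([B_1,B_2]B_1^a) = 0$ directly by cyclicity. With this the main step is complete and the rest of your argument goes through unchanged.
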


\begin{proof}[Proof \autoref{Prop_MPsiMuXiNotLarge}]
  Set
  \begin{equation*}
    \sigma
    \coloneq
    \sup\set*{
      \inner{\mu(\Psi)}{\mu(\bxi)}
      :
      \abs{\mu(\Psi)} = \abs{\mu(\bxi)} = 1
    }.
  \end{equation*}
  By Cauchy--Schwarz,
  $\sigma \leq 1$;
  moreover, if $\sigma = 1$,
  then there are
  \begin{equation*}
    \mu_\Psi \in \overline{\set{ \mu(\Psi) : \abs{\mu(\Psi)} = 1}} \qandq
    \mu_\bxi \in \overline{\set{ \mu(\bxi) : \abs{\mu(\bxi)} = 1}}    
  \end{equation*}
  with
  \begin{equation*}
    \inner{\mu_\Psi}{\mu_\bxi} = -\abs{\mu_\Psi}\abs{\mu_\bxi}.
  \end{equation*}
  Therefore,
  \begin{equation*}
    \mu_\Psi = -\mu_\bxi.
  \end{equation*}
  The upcoming discussion proves this to be impossible.

  By \autoref{Prop_AbsPiTMuPsi},
  the set $\set{ \mu(\Psi) : \abs{\mu(\Psi)} = 1}$ is closed.
  In particular,
  \begin{equation*}
    \mu_\Psi = \mu(\Psi)
  \end{equation*}
  for some $\Psi \in \H\otimes_\C\C^k$.
  The closure of $\set{ \mu(\bxi) : \abs{\mu(\bxi)} = 1}$ is
  \begin{equation*}
    \set{ \mu(\bxi) : \abs{\mu(\bxi)} = 1}
    \cup
    \set*{ \rd_{\bzeta}\mu(\hat\bxi) : \mu(\bzeta) = 0, \abs{\rd_{\bzeta}\mu(\hat\bxi)} = 1 }.
  \end{equation*}
  To see this,
  let $(\epsilon_n^{-1}\bxi_n)$ be a sequence with $\epsilon_n > 0$ and $\abs{\bxi_n} = 1$ and
  \begin{equation*}
    \lim_{n\to\infty} \mu(\epsilon_n^{-1}\bxi_n) = \mu_\bxi.
  \end{equation*}
  After passing to a subsequence,
  $\bxi_n$ converges to a limit $\bxi$ and
  $(\epsilon_n)$ converges to a limit $\epsilon$.
  If $\epsilon \neq 0$,
  then,
  $\mu_\bxi = \mu(\epsilon^{-1}\bxi)$.
  Otherwise,
  $\mu(\bxi) = 0$ and,
  for $n \gg 1$,
  as in the proof of \autoref{Lem_GammaXiMuControlsMu},
  \begin{equation*}
    \bxi_n = \bzeta_n + \hat\bxi_n \qwithq
    \mu(\bzeta_n) = 0 \qandq
    \hat\bxi_n \perp T_{\bzeta_n}\mu^{-1}(0).
  \end{equation*}
  Therefore,
  \begin{equation*}
    \mu(\epsilon_n^{-1}\bxi_n)
    = \rd_{\bzeta_n}\mu(\epsilon_n^{-2}\hat\bxi_n)
    + \epsilon_n^2\mu(\epsilon_n^{-2}\hat\bxi_n).
  \end{equation*}
  By \autoref{Eq_DMuHatXiPerpMuHatXi},
  $\epsilon_n^{-2}\hat\bxi_n$ is bounded;
  hence, after passing to a subsequence,
  it converges to a limit $\hat\bxi$ and
  $\mu_\bxi = \rd_{\bxi}\mu(\hat\bxi)$.
  
  By \autoref{Prop_MuPsiXi=0ImpliesPsi=0} and because $\Psi \neq 0$,
  $\mu_\bxi$ cannot be in $\set{ \mu(\bxi) : \abs{\mu(\bxi)} = 1}$.
  Therefore,
  \begin{equation*}
    \mu_\bxi = \rd_{\bzeta}\mu(\hat\bxi).
  \end{equation*}
  By \autoref{Prop_AbsMuXi=AbsCommutators},
  there exists a maximal torus $\ft \subset \fu(2)$ such that
  \begin{equation*}
    \bzeta \in \ft\otimes \H.
  \end{equation*}
  Therefore,
  \begin{equation*}
    \mu(\Psi) = -\rd_{\bzeta}\mu(\hat\bxi) \in \Im\H\otimes \ft^\perp.
  \end{equation*}
  This, however, is impossible,
  because it would imply that $\Psi = 0$ by \autoref{Prop_AbsPiTMuPsi}.
\end{proof}

\begin{prop}
  \label{Prop_MuPsiXiEstimate}
  There are constants $\delta_\mu,c > 0$ such that,
  for every non-zero pair $\Psi \in \H\otimes_\C\C^2$ and $\bxi \in \H\otimes\su(2)$,
  if
  \begin{equation*}
    \abs{\mu(\Psi,\bxi)} \leq \delta_\mu\paren*{\abs{\Psi}^2 + \abs{\bxi}^2},
  \end{equation*}  
  then
  \begin{equation*}
    \abs{\mu(\Psi,\bxi)}^2
    \leq
      c\paren*{
        \inner{\mu(\Psi,\bxi)}{\mu(\Psi)}
        + \frac{\abs{\Gamma_\bxi\mu(\Psi,\bxi)}^2}{\abs{\Psi}^2+\abs{\bxi}^2}
      }.
  \end{equation*}
\end{prop}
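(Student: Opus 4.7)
The plan is to reduce the inequality, via a polarization identity for the moment map, to a self-contained bound on $\abs{\mu(\bxi)}^2$, and then to establish this bound through a case analysis driven by \autoref{Lem_GammaXiMuControlsMu}. Since $\U(2)$ preserves both summands of $\H\otimes_\C\C^2 \oplus \H\otimes\su(2)$, the moment map is additive, $\mu(\Psi,\bxi) = \mu(\Psi) + \mu(\bxi)$, with $\mu(\bxi) \in \su(2)\otimes\Im\H$. Applying \autoref{Prop_AbsPiTMuPsi} to a maximal torus $\ft$ containing the center gives $\abs{\mu(\Psi)} \geq \abs{\pi_\ft\mu(\Psi)} = \tfrac12\abs{\Psi}^2$. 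Setting $b \coloneq \inner{\mu(\Psi,\bxi)}{\mu(\Psi)}$, polarization yields
\begin{equation*}
  \abs{\mu(\Psi,\bxi)}^2 = 2b + \abs{\mu(\bxi)}^2 - \abs{\mu(\Psi)}^2 \leq 2b + \abs{\mu(\bxi)}^2,
\end{equation*}
so it suffices to show
\begin{equation*}
  \abs{\mu(\bxi)}^2 \leq C b + \frac{C\abs{\Gamma_\bxi\mu(\Psi,\bxi)}^2}{\abs{\Psi}^2+\abs{\bxi}^2}.
\end{equation*}

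Fix a small parameter $\eta > 0$ and split into two regimes. In the regime $\abs{\bxi}^2 \leq \eta\abs{\Psi}^2$, the chain $\abs{\mu(\bxi)} \leq c\abs{\bxi}^2 \leq c\eta\abs{\Psi}^2 \leq 2c\eta\abs{\mu(\Psi)}$ combined with \autoref{Prop_MPsiMuXiNotLarge} forces $b \geq \tfrac12\abs{\mu(\Psi)}^2$ once $\eta$ is small enough, whence $\abs{\mu(\bxi)}^2 \leq Cb$ directly. In the complementary regime $\abs{\bxi}^2 \geq \eta\abs{\Psi}^2$ one has $\abs{\bxi}^2 \geq \tfrac{\eta}{1+\eta}(\abs{\Psi}^2 + \abs{\bxi}^2)$; after shrinking $\delta_\mu$ relative to $\eta$, the hypothesis together with \autoref{Prop_MuPsiXiBoundsMuPsi+MuXi} yields $\abs{\mu(\bxi)} \leq \delta_\mu'\abs{\bxi}^2$, with $\delta_\mu'$ the threshold from \autoref{Lem_GammaXiMuControlsMu}. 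That lemma gives $\abs{\bxi}\abs{\mu(\bxi)} \leq c\abs{\Gamma_\bxi\mu(\bxi)}$; writing $\Gamma_\bxi\mu(\bxi) = \Gamma_\bxi\mu(\Psi,\bxi) - \Gamma_\bxi\mu(\Psi)$ and exploiting that the central component of $\mu(\Psi)$ acts trivially on $\bxi$, so $\abs{\Gamma_\bxi\mu(\Psi)} \leq c\abs{\mu(\Psi)_\circ}\abs{\bxi} \leq c\abs{\mu(\Psi)}\abs{\bxi}$, produces
\begin{equation*}
  \abs{\mu(\bxi)} \leq \frac{c\abs{\Gamma_\bxi\mu(\Psi,\bxi)}}{\abs{\bxi}} + c\abs{\mu(\Psi)}.
\end{equation*}
Squaring and substituting the elementary bound $\abs{\mu(\Psi)}^2 \leq C\sigma^2\abs{\mu(\bxi)}^2 + C\max(b,0)$, which follows from \autoref{Prop_MPsiMuXiNotLarge} by splitting on the size of $\abs{\mu(\Psi)}/\abs{\mu(\bxi)}$, then lets one absorb the $\abs{\mu(\bxi)}^2$ contribution on the right.

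The main obstacle is arranging the absorption in the second regime: after squaring, the coefficient of $\abs{\mu(\bxi)}^2$ on the right-hand side is a product of the constants from Cauchy--Schwarz, the operator norm of $\bgamma$ on $\H\otimes\su(2)$, and $\sigma$, and this product must come out strictly less than $1$. If the naive estimate $\abs{\Gamma_\bxi\mu(\Psi)} \leq c\abs{\mu(\Psi)}\abs{\bxi}$ is too lossy, I would sharpen it by importing the decomposition $\bxi = \bzeta + \hat\bxi$ from the proof of \autoref{Lem_GammaXiMuControlsMu} and the $\tau$-commutation relations \autoref{Eq_TauCommutationRelations}: since $\bgamma(\cdot)\bzeta$ annihilates the $\Span{\tau_0}\otimes\Im\H$-component of $\mu(\Psi)_\circ$, only the $\Span{\tau_1,\tau_2}\otimes\Im\H$-component contributes to $\Gamma_\bxi\mu(\Psi)$ at leading order, with a remaining error bounded by a constant times $\abs{\hat\bxi} \leq c\abs{\mu(\bxi)}/\abs{\bxi}$, which is harmless in Regime~B.
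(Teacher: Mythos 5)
There is a genuine gap, and it sits exactly where you flag it: the absorption in your second regime. After squaring, your chain is $\abs{\mu(\bxi)}^2 \leq C_\eta\,\abs{\Gamma_\bxi\mu(\Psi,\bxi)}^2/\abs{\bxi}^2 + 2c_2^2\abs{\mu(\Psi)}^2$ with $c_2$ a product of the (ineffective) constant from \autoref{Lem_GammaXiMuControlsMu} and the operator norm of $\bgamma$, followed by $\abs{\mu(\Psi)}^2 \leq \lambda^2\abs{\mu(\bxi)}^2 + C_\lambda\max(b,0)$, which your case split only yields for $\lambda > \sigma$. Absorption therefore requires $2c_2^2\sigma^2 < 1$. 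Neither $\sigma$ (defined in \autoref{Prop_MPsiMuXiNotLarge} by a compactness argument, known only to be $<1$) nor the constant in \autoref{Lem_GammaXiMuControlsMu} is quantitative, so this strict numerical relation is not available and the step does not close. Your proposed repair does not fix it: replacing $\abs{\mu(\Psi)}$ by the off-torus component $\abs{(\one-\pi_\ft)\mu(\Psi)}$ gains nothing in general, because \autoref{Prop_AbsPiTMuPsi} pins down only the torus component ($\abs{\pi_\ft\mu(\Psi)} = \tfrac12\abs{\Psi}^2$); the component in $\ft^\perp\otimes\Im\H$ can be comparable to $\abs{\mu(\Psi)}$ itself (e.g.\ when the $\su(2)$--part of $\mu(\Psi)$ is a multiple of an orthogonal $3\times 3$ matrix and the central part is small), so the coefficient in the absorption is not improved. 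A secondary problem: even granting the regime--B conclusion, it comes with $\max(b,0)$ rather than $b$, and in the case $b<0$ deducing the proposition again needs a smallness relation between $\sigma$ and your constant, so the reduction $\abs{\mu(\Psi,\bxi)}^2 \leq 2b + \abs{\mu(\bxi)}^2$ plus your claim is not quite sufficient as stated.

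The reason the paper's proof avoids this trap is structural, and is worth internalizing: it never tries to bound $\abs{\mu(\bxi)}^2$ by itself and never pits $\inner{\mu(\Psi)}{\mu(\bxi)}$ against $\abs{\mu(\Psi)}\abs{\mu(\bxi)}$ with sharp constants. Instead, after normalizing and writing $\bxi = \bzeta + \hat\bxi$, it splits the \emph{full} $\mu(\Psi,\bxi)$ orthogonally along $\ft\otimes\Im\H$ and $\ft^\perp\otimes\Im\H$ for the torus $\ft$ determined by $\bzeta$. The $\ft^\perp$--component is controlled directly by $\abs{\Gamma_\bxi\mu(\Psi,\bxi)}$, because $\Gamma_\bzeta$ is injective with explicit inverse bound on $\ft^\perp\otimes\Im\H$ and $\abs{\bxi - \bzeta} \leq c\delta_\mu$; the $\ft$--component equals $\pi_\ft\mu(\Psi)$ up to the quadratically small $\mu(\hat\bxi)$, and its square appears with a good sign inside $\inner{\mu(\Psi,\bxi)}{\mu(\Psi)}$, dominating every error term because each error carries a factor of $\delta_\mu$ or of $\abs{\Gamma_\bxi\mu(\Psi,\bxi)}$. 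All absorptions are then against $\delta_\mu$, a parameter you are free to shrink, rather than against a fixed product of ineffective constants. If you want to salvage your outline, the essential change is to run the torus decomposition on $\mu(\Psi,\bxi)$ itself rather than on $\mu(\bxi)$ alone.
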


\begin{proof}
  Without loss of generality $\abs{\Psi}^2+\abs{\bxi}^2 = 1$.
  By \autoref{Prop_AbsPiTMuPsi} and \autoref{Prop_MuPsiXiBoundsMuPsi+MuXi},
  \begin{equation*}
    \abs{\Psi}^2 \leq c\abs{\mu(\Psi,\bxi)} \leq c\delta_\mu \qandq
    \abs{\mu(\bxi)} \leq c\abs{\mu(\Psi,\bxi)} \leq c\delta_\mu.
  \end{equation*}
  Therefore,
  for $\delta_\mu \ll 1$,
  \begin{equation*}
    \frac{1}{2} \leq \abs{\bxi} \leq 1.
  \end{equation*}

  If $\delta_\mu \ll 1$,
  then,
  as in the proof of \autoref{Lem_GammaXiMuControlsMu},
  $\bxi$ uniquely decomposes as
  \begin{equation*}
    \bxi = \bzeta + \hat\bxi
  \end{equation*}
  with
  \begin{equation*}
    \mu(\bzeta) = 0, \quad
    \hat\bxi \perp T_{\bzeta}\mu^{-1}(0), \qandq
    \abs{\hat\bxi} \leq c\abs{\mu(\bxi)} \leq c\delta_\mu.
  \end{equation*}
  In particular,
  \begin{equation*}
    \abs{\mu(\hat\bxi)}
    \leq
      c\abs{\mu(\bxi)}^2
    \leq
      c\delta_\mu\abs{\mu(\bxi)}.
  \end{equation*}
  Denote by $\ft \subset \fu(2)$ the maximal torus determined by $\bzeta$ and by $\pi_\ft \co \fu(2) \to \ft$ the orthogonal projection onto $\ft$.
  The discussion in the proof of \autoref{Lem_GammaXiMuControlsMu} shows that
  \begin{equation*}
    \pi_\ft\mu(\bxi) = \mu(\hat\bxi)
    \qandq
    (\one-\pi_\ft)\mu(\bxi) = 2\mu(\bzeta,\hat\bxi),
  \end{equation*}
  and, moreover,
  \begin{equation*}
    \abs{(\one-\pi_\ft)\mu(\Psi,\bxi)}
    \leq
      \abs{\Gamma_{\bzeta}\mu(\Psi,\bxi)}
    \leq
      \abs{\Gamma_\bxi\mu(\Psi,\bxi)} + c\delta_\mu\abs{\mu(\Psi,\bxi)}.
  \end{equation*}

  By the discussion in the preceding paragraph and \autoref{Prop_AbsPiTMuPsi},
  \begin{align*}
    \inner{\mu(\Psi,\bxi)}{\mu(\Psi)}
    &=
      \abs{\pi_\ft\mu(\Psi)}^2
      + \inner{\mu(\hat\bxi)}{\mu(\Psi)}
      + \inner{(\one-\pi_\ft)\mu(\Psi,\bxi)}{\mu(\Psi)} \\
    &\geq
      \abs{\pi_\ft\mu(\Psi)}^2
      - c\delta_\mu\abs{\mu(\Psi,\bxi)}\abs{\Psi}^2
      - c\abs{\Gamma_\bxi\mu(\Psi,\bxi)}\abs{\Psi}^2 \\
    &\geq
      \frac12\abs{\pi_\ft\mu(\Psi)}^2
      - c\delta_\mu^2\abs{\mu(\Psi,\bxi)}^2
      - c\abs{\Gamma_\bxi\mu(\Psi,\bxi)}^2.
  \end{align*}
  Therefore,
  \begin{align*}    
    \abs{\mu(\Psi,\bxi)}^2
    &\leq
      \abs{\pi_\ft\mu(\Psi)}^2
      + \abs{\mu(\hat\bxi)}^2
      + \abs{(\one-\pi_\ft)\mu(\Psi,\bxi)}^2 \\
    &\leq
      c\inner{\mu(\Psi,\bxi)}{\mu(\Psi)}
      + c\delta_\mu^2\abs{\mu(\Psi,\bxi)}^2
      + c\abs{\Gamma_\bxi\mu(\Psi,\bxi)}^2.
  \end{align*}
  For $\delta_\mu \ll 1$, this implies the asserted inequality by rearrangement.
\end{proof}

\subsection{Verification of \autoref{Hyp_GammaFControlsF}}

\begin{lemma}
  \label{Lem_HypothesisForADHM12SW}
  Assume the situation of \autoref{Sec_ADHM12SeibergWittenCompactness}.
  There are constants $r_0,\delta_\mu,c > 0$ such that the following holds for every $x \in M$ and $r \in (0,r_0]$.
  If $A \in \sA(\Ad(\fw))$, $\Psi \in \Gamma(W)$, and $\bxi \in \Gamma(N\otimes\Ad(\fw)_\circ)$ satisfy \autoref{Eq_ADHM12SeibergWitten},
  \begin{equation*}
    \frac12 \leq \sqrt{\abs{\Psi}^2 + \abs{\bxi}^2} \leq 2, \qandq
    \abs{\mu(\Psi,\bxi)} \leq \delta_\mu,
  \end{equation*}
  then
  \begin{equation*}
    \frac{r}{2}\int_{B_{r/2}(x)} \abs{F_{A}}^2
    + \paren*{\frac{r}{\epsilon}}^2\cdot r^{-1}\int_{B_{r/2}(x)} \abs{\nabla_A\Psi}^2
    \leq
      c + cr\int_{B_r(x)} \abs{\Gamma_\bxi F_{A}}^2.
  \end{equation*}
\end{lemma}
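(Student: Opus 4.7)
The strategy is to combine the algebraic inequality of \autoref{Prop_MuPsiXiEstimate} with the Lichnerowicz--Weitzenb\"ock identity applied to the $W$-component $\Psi$, using the quantitative smallness $|\Psi|^2 \leq c\epsilon^2|F_A|$ that follows from \autoref{Prop_AbsPiTMuPsi} and \autoref{Prop_MuPsiXiBoundsMuPsi+MuXi}. Throughout, fix a standard cutoff $\chi \in C_0^\infty(B_r(x),[0,1])$ with $\chi \equiv 1$ on $B_{r/2}(x)$ and $r|\nabla\chi|+r^2|\nabla^2\chi| \leq c$.

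For $\delta_\mu$ sufficiently small, \autoref{Prop_MuPsiXiBoundsMuPsi+MuXi} together with \autoref{Prop_AbsPiTMuPsi} yield, pointwise on $B_r(x)$, the bounds $|\Psi|^2 \leq c|\mu(\Psi,\bxi)| = c\epsilon^2|F_A|$ and $|\bxi|^2 \geq 1/8$. The latter puts us in the regime of \autoref{Prop_MuPsiXiEstimate}; dividing its conclusion by $\epsilon^4$ one obtains the pointwise inequality
\[
|F_A|^2 \leq c\epsilon^{-2}\langle F_A,\mu(\Psi)\rangle + c|\Gamma_\bxi F_A|^2,
\]
which upon multiplication by $r\chi^2$ and integration becomes
\[
r\int\chi^2|F_A|^2 \leq cr\epsilon^{-2}\int\chi^2\langle F_A,\mu(\Psi)\rangle + cr\int\chi^2|\Gamma_\bxi F_A|^2.
\]

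To handle the cross term, I would apply the Lichnerowicz--Weitzenb\"ock formula to $\Psi$. Since $\slD_A\Psi = 0$ and $\epsilon^2 F_A = \mu(\Psi,\bxi)$, a derivation parallel to that of \autoref{Eq_LaplacianOfAbsPhiSquaredEqualsLowerOrderTerm} restricted to the $W$-summand gives
\[
\tfrac12\Delta|\Psi|^2 + |\nabla_A\Psi|^2 + 2\langle F_A,\mu(\Psi)\rangle + \langle \fK\Psi,\Psi\rangle = 0.
\]
Testing against $\chi^2$ and integrating the Laplacian term by parts produces $\int\chi^2\langle F_A,\mu(\Psi)\rangle = -\tfrac12\int\chi^2|\nabla_A\Psi|^2 + \mathrm{E}$, with $|\mathrm{E}| \leq cr^{-2}\int_{B_r(x)}|\Psi|^2$. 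Substituting this into the previous inequality and rearranging yields
\[
r\int\chi^2|F_A|^2 + \tfrac{cr}{2\epsilon^2}\int\chi^2|\nabla_A\Psi|^2 \leq \tfrac{c}{r\epsilon^2}\int_{B_r(x)}|\Psi|^2 + cr\int\chi^2|\Gamma_\bxi F_A|^2.
\]

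The decisive step is bounding the error $\tfrac{c}{r\epsilon^2}\int_{B_r(x)}|\Psi|^2$. Applying the gain $|\Psi|^2 \leq c\epsilon^2|F_A|$ and Cauchy--Schwarz on $B_r(x)$ gives $\tfrac{c}{r\epsilon^2}\int_{B_r(x)}|\Psi|^2 \leq \tfrac{c}{r}\int_{B_r(x)}|F_A| \leq c\bigl(r\int_{B_r(x)}|F_A|^2\bigr)^{1/2}$, and Young's inequality bounds this further by $\tfrac14 r\int_{B_r(x)}|F_A|^2 + c'$. After absorbing this term into the left-hand side (which requires either a careful cutoff-refinement argument or an inductive/covering step to bridge $\int\chi^2|F_A|^2$ on the left to $\int_{B_r(x)}|F_A|^2$ on the right) and observing $(r/\epsilon)^2 r^{-1} = r/\epsilon^2$, the claim follows.

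The main obstacle is precisely this last absorption step: a naive bound based on $|\Psi|\leq 2$ from the hypothesis produces an error of order $r^2/\epsilon^2$, unbounded in the small-$\epsilon$ regime relevant to the compactness theorem and incompatible with the conclusion. It is essential to exploit the quantitative smallness of $\Psi$ coming from the algebraic identity $|\pi_\ft\mu(\Psi)| = |\Psi|^2/2$ of \autoref{Prop_AbsPiTMuPsi} together with \autoref{Prop_MuPsiXiBoundsMuPsi+MuXi}; this is the point at which the specific algebraic structure of the ADHM$_{1,2}$ representation, rather than only the abstract framework of \autoref{Hyp_GammaFControlsF}, enters the argument.
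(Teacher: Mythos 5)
Your skeleton is essentially the paper's proof: combine \autoref{Prop_MuPsiXiEstimate} with the Lichnerowicz--Weitzenböck identity for the $W$--component $\Psi$, and exploit the ADHM$_{1,2}$--specific smallness of $\Psi$ coming from \autoref{Prop_AbsPiTMuPsi} and \autoref{Prop_MuPsiXiBoundsMuPsi+MuXi}. However, the step you flag as an ``obstacle'' is a genuine gap as written, and the covering/induction idea you gesture at does not obviously close it. When you test the Weitzenböck identity with $\chi^2$, integration by parts produces $\int \Delta(\chi^2)\abs{\Psi}^2$, and $\Delta(\chi^2)=2\chi\Delta\chi+2\abs{\nabla\chi}^2$ contains $2\abs{\nabla\chi}^2$, which does not vanish where $\chi$ is small; hence your error term $\frac{c}{r\epsilon^2}\int_{B_r(x)}\abs{\Psi}^2$ carries no cut-off weight. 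After Cauchy--Schwarz and Young it becomes $\frac14 r\int_{B_r(x)}\abs{F_A}^2+c$, supported on the full ball, while your left-hand side $r\int\chi^2\abs{F_A}^2$ only controls $B_{r/2}(x)$: there is nothing to absorb it into, and no a priori bound on $r\int_{B_r(x)}\abs{F_A}^2$ is available (this lemma is precisely what supplies \autoref{Hyp_GammaFControlsF}, so assuming such a bound would be circular). A dyadic iteration of the resulting half-ball inequality only propagates the uncontrolled top-scale term and never removes it.

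The paper's fix is a cut-off bookkeeping device rather than a new idea: test with $\chi^4$ instead of $\chi^2$, so that $\abs{r^2\Delta(\chi^4)}\leq c\chi^2$ and the error retains a $\chi^2$ weight, namely $c\epsilon^{-2}r^{-1}\int\chi^2\abs{\Psi}^2$. Then, instead of your ``pointwise bound, then Cauchy--Schwarz'' ordering, apply Young's inequality pointwise,
\begin{equation*}
  c\epsilon^{-2}r^{-1}\chi^2\abs{\Psi}^2
  \leq
  \frac{r}{c_1\epsilon^4}\chi^4\abs{\Psi}^4 + cr^{-3},
\end{equation*}
and use $\abs{\Psi}^4\leq c\abs{\mu(\Psi,\bxi)}^2=c\epsilon^4\abs{F_A}^2$ (your own observation, from \autoref{Prop_AbsPiTMuPsi} and \autoref{Prop_MuPsiXiBoundsMuPsi+MuXi}), with $c_1$ chosen so that the outcome is $\frac{r}{2}\int\chi^4\abs{F_A}^2+c$. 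This term sits under the same cut-off as the left-hand side and is absorbed immediately; no covering or induction is needed. (The paper also organizes the two ingredients slightly differently---it substitutes the Weitzenböck identity into \autoref{Prop_MuPsiXiEstimate} pointwise to obtain \autoref{Eq_MuPsiXiWeitzenbock} before multiplying by $r\epsilon^{-4}\chi^4$ and integrating---but that difference is cosmetic; the essential ingredient missing from your write-up is the $\chi^4$ trick combined with the pointwise Young step.)
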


\begin{proof}
  By the Lichnerowicz--Weitzenböck formula \autoref{Eq_LichnerowiczWeitzenbock},
  \begin{equation*}
    \frac12\Delta\abs{\Psi}^2
    + \abs{\nabla_A\Psi}^2
    + \epsilon^{-2}\inner{\bgamma(\mu(\Psi,\bxi)\Psi}{\Psi}
    + \inner{\gamma(\fR)\Psi}{\Psi} = 0.
  \end{equation*}  
  Therefore,
  by hypothesis and \autoref{Prop_MuPsiXiEstimate},
  \begin{equation}
    \label{Eq_MuPsiXiWeitzenbock}
    \abs{\mu(\Psi,\bxi)}^2
    + \epsilon^2\abs{\nabla_A\Psi}^2
    \leq
      c_1\paren*{\abs{\Gamma_\bxi\mu(\Psi,\bxi)}^2 + \epsilon^2\abs{\Psi}^2}
    - c_2\epsilon^2\Delta\abs{\Psi}^2.
  \end{equation}
  Let $\chi \in C_0^\infty(B_r(x))$ be a cut-off function satisfying $\chi|_{B_{r/2}(x)} = 1$,
  \begin{equation*}
    r\abs{\nabla\chi} \leq c, \qandq
    r^2\abs{\nabla^2\chi} \leq c;
  \end{equation*}
  in particular,
  \begin{equation*}
    \abs{r^2 \Delta\chi^4} \leq c\chi^2.
  \end{equation*}
  Multiplying \autoref{Eq_MuPsiXiWeitzenbock}
  by $r\epsilon^{-4}\chi^4$,
  integrating by parts,
  and using $F_A = \epsilon^{-2}\mu(\Psi,\bxi)$, yields
  \begin{equation*}
    r\int_{B_r(x)} \chi^4\abs{F_A}^2
    + \paren*{\frac{r}{\epsilon}}^2r^{-1}\int_{B_r(x)} \chi^4 \abs{\nabla_A\Psi}^2 \\
    \leq
      cr\int_{B_r(x)} \chi^4\abs{\Gamma_\bxi F_A}^2
      + c\epsilon^{-2}r^{-1}\int_{B_r(x)} \chi^2\abs{\Psi}^2.
  \end{equation*}
  By \autoref{Prop_AbsPiTMuPsi} and \autoref{Prop_MuPsiXiBoundsMuPsi+MuXi},
  \begin{align*}
    c\epsilon^{-2} r^{-1}\int_{B_r(x)} \chi^2\abs{\Psi}^2
    &\leq
      \frac{r}{c_1 \epsilon^4}\int_{B_r(x)} \chi^4\abs{\Psi}^4
      + c_2  \\
    &\leq
      \frac{r}{2} \int_{B_r(x)} \chi^4\epsilon^{-4}\abs{\mu(\Psi,\bxi)}^2
      + c_2  \\
    &=
      \frac{r}{2} \int_{B_r(x)} \chi^4\abs{F_A}^2
      + c_2.
  \end{align*}
  Plugging this back in to the previous inequality and rearranging proves the asserted inequality.
\end{proof}

\subsection{Conclusion of the proof of \autoref{Thm_ADHM12SeibergWittenCompactness}}
\label{Sec_ConclusionOfProofOfADHM12SeibergWittenCompactness}

Let $(A_n,\Psi_n,\bxi_n)_{n \in \N}$ be a sequence of solutions of \autoref{Eq_ADHM12SeibergWitten} with
\begin{equation*}
  \liminf_{n\to\infty}\, \Abs{(\Psi_n,\bxi_n)}_{L^2} = \infty.
\end{equation*}
Set
\begin{equation*}
  \epsilon_n
  \coloneq
    \frac{1}{\Abs{(\Psi_n,\bxi_n)}_{L^2}}, \quad
  \tilde\Psi_n
  \coloneq
    \epsilon_n\Psi_n, \qandq
  \tilde\bxi_n
  \coloneq
    \epsilon_n\bxi_n.
\end{equation*}

By \autoref{Lem_HypothesisForADHM12SW},
\autoref{Thm_AbstractCompactness} applies to the sequence $(A_n,\tilde\Psi_n,\tilde\bxi_n,\epsilon_n)$.
Therefore and by \autoref{Prop_MuPsiXi=0ImpliesPsi=0},
the following hold:
\begin{enumerate}
\item
  There is a closed, nowhere-dense subset $Z \subset M$,
  a connection $A \in \sA(\Ad(\fw)|_{M\setminus Z},B)$, and
  a section $\bxi \in \Gamma(M\setminus Z,N\otimes\Ad(\fw)_\circ)$ such that the following hold:
  \begin{enumerate}
  \item
    $A$ and $\bxi$ satisfy
    \begin{equation*}
      \begin{split}
        \slD_A\bxi
        &= 0, \\
        \mu(\bxi)
        &=
        0, \qand \\
        \Abs{\bxi}_{L^2}^2
        &=
        1.
      \end{split}
    \end{equation*}
  \item
    The function $\abs{\bxi}$ extends to a Hölder continuous function on all of $M$ and
    \begin{equation*}
      Z = \abs{\bxi}^{-1}(0).
    \end{equation*}
  \end{enumerate}
\item
  After passing to a subsequence and up to gauge transformations,
  for every compact subset $K\subset M\setminus Z$,
  $\paren*{A_n|_K}_{n \in \N}$ converges to $A$ in the weak $W_\loc^{1,2}$ topology,
  $\paren*{\tilde\Psi_n|_K}_{n \in \N}$ converges to $0$ in the weak $W_\loc^{2,2}$ topology,
  $\paren*{\tilde\bxi_n|_K}_{n \in \N}$ converges to $\bxi$ in the weak $W_\loc^{2,2}$ topology,
  and there exists an $\alpha \in (0,1)$ such that $\paren*{\abs{(\tilde\Psi_n,\tilde\bxi_n)}}_{n \in \N}$ converges to $\abs{\bxi}$ in the $C^{0,\alpha}$ topology.
\end{enumerate}

The Euclidean line bundle $\fl$ and the parallel section $\tau$ emerge from the Haydys correspondence \cite[Appendix C]{Doan2017d}.
In the present case this abstract machinery can be made very explicit.
By \autoref{Prop_AbsMuXi=AbsCommutators},
away from $Z$,
$\bxi$ can \emph{locally} be written as
\begin{equation*}
  \bxi = \tau \otimes \nu
\end{equation*}
where $\tau$ is a local section of $\Ad(\fw)_\circ$ which is normalized such that $\abs{\tau} = 1$,
and $\nu$ is a local section of $N$.
This decomposition is unique up to multiplying both $\tau$ and $\nu$ by $-1$.
Decomposing $\bxi$ in this way,
the equation $\slD_A\bxi = 0$ becomes
\begin{equation*}
  0 = \tau\otimes \slD_B\nu + \sum_{i=1}^3 (\nabla_{A,e_i}\tau)\otimes \gamma(e_i)\nu.
\end{equation*}
Since $\tau$ is normalized,
the second term on the right-hand side takes values in $\Span{\tau}^\perp\otimes N$.
Therefore, both summands on the right-hand side vanish separately.
Globally,
there is a flat Euclidean line bundle $\fl$ such that $\nu$ is a section of $N\otimes\fl$ and $\tau$ is a section of $\Hom(\fl,\Ad(\fw)_\circ)$.
The above shows that $(Z,\fl,\nu)$ is a harmonic $\Z_2$ spinor whose zero locus is precisely $Z$ and $\tau$ is $A$--parallel.
By \cite[Theorem 1.4]{Zhang2017},
the former implies the asserted regularity of $Z$.

On every compact subset $K \subset M\setminus Z$,
$\Abs{F_{A_n}}_{L^2(K)}$ is uniformly bounded.
Therefore,
by \autoref{Prop_AbsPiTMuPsi} and \autoref{Prop_MuPsiXiBoundsMuPsi+MuXi},
$\Abs{\Psi_n}_{L^2(K)}$ is uniformly bounded.
Since $\slD_{A_n}\Psi_n = 0$,
it follows that,
possibly after passing to a further subsequence,
$\paren{\Psi_n|_K}$ converges in the weak $W^{2,2}$ topology to a limit $\Psi$ satisfying $\slD_A\Psi = 0$.
For $n \gg 1$, on $K$, we can decompose $\tilde\bxi_n = \bzeta_n + \hat\bxi_n$ as in the proof of \autoref{Lem_GammaXiMuControlsMu}.
Denote by $\ft_n \subset \Ad(\fw)|_K$ the corresponding bundle of maximal tori.
The ADHM$_{1,2}$ Seiberg--Witten equation \autoref{Eq_ADHM12SeibergWitten} and \autoref{Eq_DMuHatXiPerpMuHatXi} imply
\begin{equation*}
  \pi_{\ft_n}F_{A_n} = \pi_{\ft_n}\mu(\Psi_n) + \epsilon_n^{-2}\pi_{\ft_n}\mu(\hat\bxi_n).
\end{equation*}
By \autoref{Eq_DMuHatXiPerpMuHatXi} and \autoref{Prop_MuPsiXiBoundsMuPsi+MuXi},
\begin{equation*}
  \abs{\hat\bxi_n} \leq c\epsilon_n^2\abs{F_{A_n}}.
\end{equation*}
Therefore,
\begin{equation*}
  \abs{\pi_{\ft_n}F_{A_n} - \pi_{\ft_n}\mu(\Psi_n)}
  \leq c\epsilon_n^2\abs{F_{A_n}}^2.
\end{equation*}
From this it follows that
\begin{equation*}
  F_A = \pi_\ft F_A = \pi_\ft\mu(\Psi).
\end{equation*}
This finishes the proof of \autoref{Thm_ADHM12SeibergWittenCompactness}.
\qed


\printreferences

\end{document}
